\title{Grid homology for singular links in lens space and a resolution cube}
\author{Yonghan Xiao}
\address{Department of Mathematics, School of Mathematical Sciences\\Peking University}
\email{judy\underline{ }xyh0530@stu.pku.edu.cn}
\newtheorem{thm}{Theorem}[section]
\newtheorem{rmk}[thm]{Remark}
\newtheorem{prop}[thm]{Proposition}
\newtheorem{lem}[thm]{Lemma}
\newcommand{\Acknowledgement}{$\mathbf{Acknowledgement}$}
\theoremstyle{definition}
\newtheorem{defi}[thm]{Definition}
\newcommand{\Sbold}{\boldsymbol{S}}
\newcommand{\RR}{\mathbb{R}}
\newcommand{\ZZ}{\mathbb{Z}}
\newcommand{\QQ}{\mathbb{Q}}
\newcommand{\FF}{\mathbb{F}}
\newcommand{\xv}{\boldsymbol{x}}
\newcommand{\yv}{\boldsymbol{y}}
\newcommand{\wv}{\boldsymbol{w}}
\newcommand{\zv}{\boldsymbol{z}}
\newcommand{\AV}{\boldsymbol{\alpha}}
\newcommand{\BV}{\boldsymbol{\beta}}
\newcommand{\TA}{\mathbb{T}_\alpha}
\newcommand{\TB}{\mathbb{T}_\beta}
\newcommand{\sfrak}{\mathfrak{s}}
\newcommand{\DCAL}{\mathcal{D}}
\newcommand{\OV}{\mathbb{O}}
\newcommand{\XXV}{\mathbb{X}}
\newcommand{\BI}{\mathbf{I}}
\newcommand{\BN}{\mathbf{N}}
\begin{document}
	\maketitle
	
	\begin{abstract}
		In this paper, we define grid homologies for singular links in lens spaces and use them to construct a resolution cube for knot Floer homology of regular links in lens spaces. The results will first be proved over $\ZZ/2\ZZ$ and then over $\ZZ$ with the help of sign assignments. We will also identify the signed grid homology and classical knot Floer homology over $\ZZ$ for regular links in lens spaces, illustrating the fact that our resolution cube is genuinely one for knot Floer homology. The main advancement in the paper is that we give a complete description of singular knot theory in lens spaces which was only defined in $S^3$ previously and we construct a signed  combinatorial resolution cube for knot Floer homology in lens spaces which may be powerful in relating $HFK^\circ$ to other link homology theories.
	\end{abstract}
	
	\tableofcontents
	\phantomsection 
	\hypertarget{MyToc}{}  
	
	\section{Introduction}
	
	After Heegaard Floer homology was defined in \cite{MR2113019} and \cite{MR2113020}, knot Floer homology was introduced by Ozsv\'ath and Szab\'o in \cite{MR2065507}, and also independently by Rasmussen in \cite{MR2704683}. In \cite{MR2529302}, Ozsv\'ath, Stipsicz and Szab\'o generalized knot Floer homology to singular knots in $S^3$. Later, Ozsv\'ath and Szab\'o \cite{MR2574747} used this to construct a resolution cube with twisted coefficients for knot Floer homology over $\ZZ/2\ZZ$. Afterward, Manolescu \cite{MR3229041} proved an untwisted version over $\ZZ/2\ZZ$. Recently, Beliakova, Putyra, Robert, and Wagner proved the corresponding result over $\ZZ$ in \cite[Section 4 and 5]{Beliakova2022APO}. With the help of this, Dowlin \cite{MR4777638} constructed a spectral sequence from Khovanov homology to knot Floer homology.
	
	On the other hand, in \cite{46590c91-7ad9-3a60-87b6-4a8ddc2d37fe},  Sarkar and Wang pointed out a criterion that ensures the count of holomorphic disks is combinatorial. A combinatorial version of knot Floer homology (so-called grid homology) first appeared in \cite{MR2480614}. Later, Manolescu, Ozsv\'ath, Szab\'o, and Thurston \cite{MR2372850} generalized the construction to links and extended the base ring to $\ZZ$ via a sign assignment (See also \cite{Gallais2007SignRF}). The book \textit{Grid homology for knots and links} (\cite{MR3381987}) gives a detailed and complete description of this theory. 
	
	Lens spaces are defined as rational surgeries on the unknot in $S^3$, i.e., $L(p,q)=S^3_{-p/q}(U)$. They are the simplest manifolds in terms of Heegaard decomposition other than $S^3$. In \cite{MR2429242}, Baker, Grigsby, and Hedden constructed grid diagrams for knots and links in lens spaces and proved that the combinatorially defined theory coincides with the existing one. In \cite{Celoria_2023}, Celoria constructed a sign assignment for grid diagrams of lens space links, and then the invariance of this oriented theory was proved by Tripp \cite{tripp2021gridhomologylensspace}. In \cite{MR3677933}, Harvey and O'Donnol also considered grid homology for spatial graphs in $S^3$ (Bao also considered Heegaard Floer homology for bipartite graphs in \cite{bao2018floerhomologyembeddedbipartite}). More recently, Kubota further developed properties of grid homology for spatial graphs in \cite{MR4701948}, \cite{Kubota2023SomePO} and \cite{kubota2024gridhomologyspatialgraphs}.
	
	In this paper, we generalize previous constructions to singular links in lens spaces in a combinatorial way. Specifically, we will \begin{itemize}
		\item First, define three versions of grid homology for singular links in lens spaces and prove that they are well-defined invariants;
		\item Then, construct skein exact sequences and a resolution cube for the grid homology of lens space links;
		\item Finally, extend the base ring of our theory from $\ZZ/2\ZZ$ to $\ZZ$ via an appropriate choice of sign assignment.
	\end{itemize}
	
	There were two interpretations of singular links in previous works. Some refer to immersions of a disjoint union of $S^1$'s with transverse double points as singular links (e.g.\cite{MR2491583}), while others refer to embeddings of certain kinds of graphs as singular links. We take the second approach following \cite{MR2529302} and define a singular link in a 3-manifold $Y$ as an embedding of a special oriented trivalent graph (see \text{Definition \ref{singular link1} and Definition \ref{singular link2}}). 
	
	In \cite{MR2574747}, the authors defined Heegaard diagrams for singular links in $S^3$ by introducing $XX$ base points to an usual one. (Also, in \cite{MR3677933}, the authors introduced nonstandard $O$ base points for the same purpose.) Using their strategy, we define a grid diagram for singular links in lens spaces to consist of iterated pairs of $\alpha$ and $\beta$ curves from the standard Heegaard splitting of $L(p,q)$ as background rulings (following \cite{MR2429242}) and $O$, $X$, $XX$ three kinds of base points that record the information of the link. For details, see \text{Definition \ref{grid diagram}}.
	
	After giving a precise definition of a grid diagram for a singular link, we show that each link admits such a diagram by direct construction. In Figure \ref{fig:exofgriddiagramchap1}, we illustrate how to visualize a singular link in lens space on the Heegaard torus and using it to construct a grid diagram. We will come back to this point in Section \ref{Construction of grid diagrams} and give a detailed explanation on this. \begin{figure}
		\centering
		\begin{overpic}[width=0.8\textwidth]{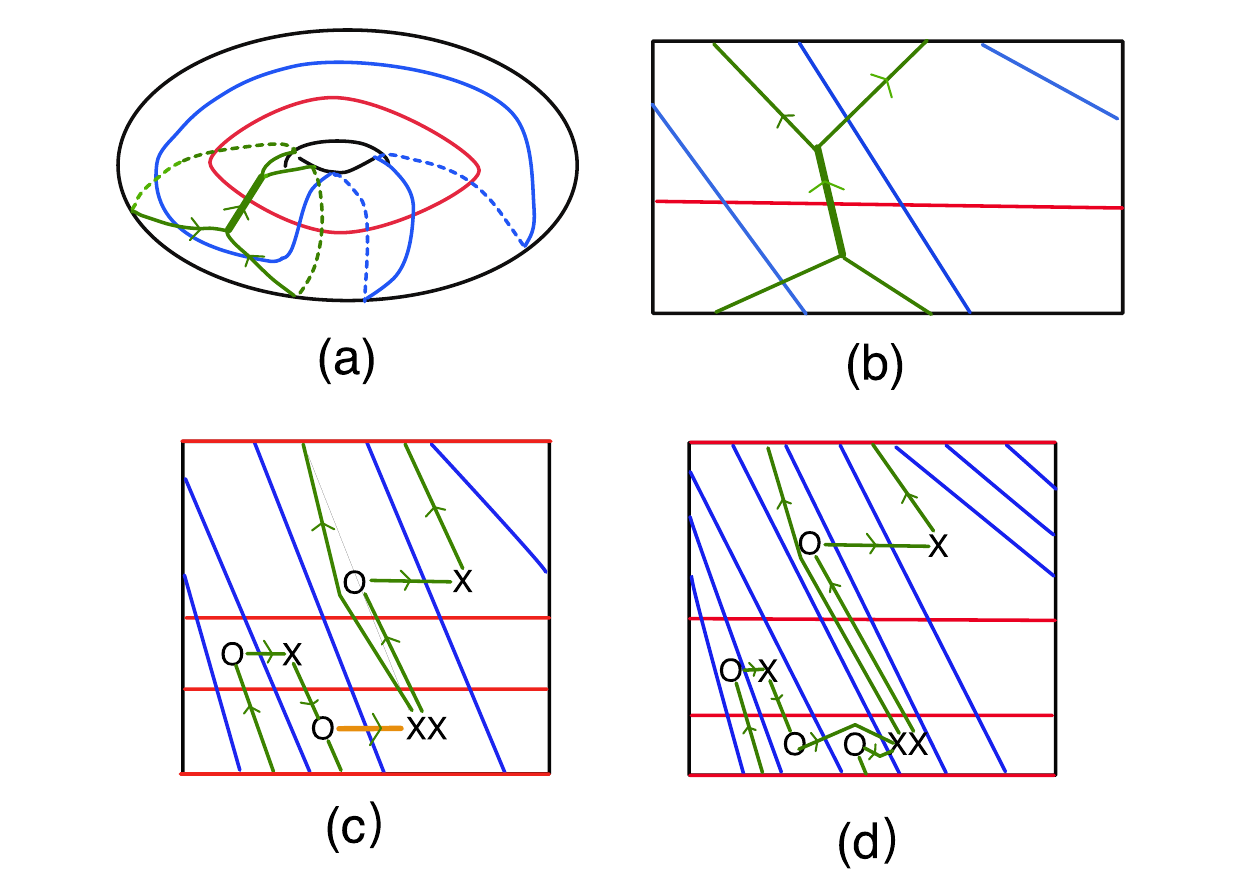}
			\put(33,62) {$\alpha$}
			\put(35,50) {$\beta$}
			
			\put(80,56) {$\alpha$}
			\put(70,60) {$\beta$}
			
		\end{overpic}
		
		\caption{\textbf{An example of grid diagram}}
		\label{fig:exofgriddiagramchap1}
	\end{figure}
	
	Then, we show Reidemeister's theorem for torus diagrams of singular links (a generic projection to Heegaard torus) and prove that two grid diagrams represent the same singular link if and only if they can be connected by a sequence of grid moves.
	\begin{thm}
		Consider $L$ and $L'$ to be singular links in $L(p,q)$ with the same underlying graph, the following are equivalent:
		\begin{enumerate}
			\item $L$ and $L'$ are equivalent;
			\item $L$ and $L'$ differ by a finite sequence of disk moves (\text{Definition \ref{disk move}});
			\item Regular projections (\text{Definition \ref{regular projection}}) of $L$ and $L'$ to the Heegaard torus differ by a finite sequence of singular Reidemeister moves shown in \text{Figure \ref{fig:rmoves-new}} and disk slides (\text{Definition \ref{disk slide}}). 
		\end{enumerate}
		
	\end{thm}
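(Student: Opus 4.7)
The plan is to prove the equivalences in the order $(1)\Leftrightarrow(2)$ and then $(2)\Leftrightarrow(3)$, following the classical scheme for Reidemeister's theorem but adapted to two new features: the trivalent singular vertices (whose local cyclic orders and orientations must be preserved throughout every move) and the Heegaard torus projection, which allows strands to traverse either of the two solid tori of $L(p,q)$.

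For $(1)\Leftrightarrow(2)$, the implication $(2)\Rightarrow(1)$ is immediate because any disk move extends to an ambient isotopy supported in a small neighbourhood of the disk. For the forward direction, I would put the ambient isotopy between $L$ and $L'$ in general position inside $L(p,q)\times[0,1]$, triangulate its track, and subdivide it into finitely many elementary modifications. Each such modification exchanges one arc with another along the boundary of an embedded PL disk; since the isotopy fixes the graph type at every vertex, these modifications are disk moves in the sense of Definition \ref{disk move}.

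For the equivalence with $(3)$, the easy direction is $(3)\Rightarrow(2)$: I would realize each singular Reidemeister move in Figure \ref{fig:rmoves-new} by an explicit embedded disk in $L(p,q)$. Moves occurring near ordinary or singular crossings use disks lying in a small collar of the Heegaard torus, while the moves produced by strands passing across an $\alpha$ or $\beta$ curve are realized by (portions of) meridional disks of the appropriate solid torus. For $(2)\Rightarrow(3)$, given a disk $D$ realizing a disk move, I would put $D$ in general position with respect to the projection $\pi$ to the Heegaard torus and with respect to the two solid torus cores, then decompose $D$ into small subdisks, each of which interacts with the projected diagram in one of three ways: a planar isotopy, a local Reidemeister move near a smooth or singular crossing, or a move created by a segment of $\partial D$ crossing one of the cores.

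The main obstacle lies in this last step: one has to verify that the list of moves in Figure \ref{fig:rmoves-new} is \emph{complete}, that is, that every local model for how a general-position disk can meet the projection decomposes into the listed moves. The case analysis is more elaborate than in the $S^3$ setting both because singular vertices generate several new local configurations (in particular involving a vertex sliding past a strand or through a crossing) and because the lens space replaces the single "move at infinity" from the $S^3$ theory with a family of moves indexed by how $\partial D$ meets the $\alpha$ and $\beta$ curves; carefully enumerating these local models and matching them one-by-one with the moves in Figure \ref{fig:rmoves-new} is the heart of the argument.
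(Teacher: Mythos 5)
Your overall skeleton ($(1)\Rightarrow(2)\Rightarrow(3)\Rightarrow(1)$ with disk moves as the intermediary) matches the paper's, but there are two places where your plan either hides the real work or heads in a wrong direction.

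First, in $(1)\Rightarrow(2)$ you propose to put the ambient isotopy in general position in $L(p,q)\times[0,1]$, triangulate its track, and ``subdivide it into finitely many elementary modifications.'' That last clause is precisely the hard step, and as stated it is not justified: a generic PL isotopy does not come pre-packaged as a sequence of disk moves, and one must explain why nearby time-slices of the track differ by disk moves at all. The paper avoids this by a Kawauchi-style shrinking argument: it arranges that the homeomorphism $f$ preserves the Heegaard splitting, pushes $L$ and $L'$ into the interior of $V_\beta$, and constructs an explicit straight-line contraction $F(x,t)=(e_S(x),e_I(x)-t)$ of $L$ into a thin solid torus $V_{\epsilon'}$ fixed by $f$; local embeddedness of $F$ on $L\times N(t)$ plus compactness of $[0,1]$ is what produces the finite sequence of disk moves, and applying $f$ then transports the sequence to $L'$. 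If you want to keep your track-of-the-isotopy route you must supply an analogous local-embedding/compactness argument; otherwise the implication is asserted rather than proved.

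Second, in $(2)\Rightarrow(3)$ you claim the heart of the matter is a ``family of moves indexed by how $\partial D$ meets the $\alpha$ and $\beta$ curves,'' replacing the move at infinity from the planar theory. This is a misreading of the setting: the theorem is about regular projections to the Heegaard torus itself (Definition \ref{regular projection}), not to a cut-open planar square, so isotopy of the diagram around the torus --- including a strand sweeping past a meridian disk or a core circle --- costs nothing and requires no new move; accordingly Figure \ref{fig:rmoves-new} contains only moves I--V, and your proposed matching of core-crossing local models to entries of that figure has nothing to match against. (The bookkeeping of how a diagram meets the cutting curves only becomes relevant later, at the level of grid moves such as cyclic permutation.) What actually needs to be checked here, and what the paper does, is the finite case analysis of small triangles containing a single crossing (quoted from the regular case) or a single graphical vertex, the latter resolving into Reidemeister moves IV and V possibly combined with move II; your sketch acknowledges this case analysis but misidentifies where the new content lies.
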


	\begin{thm}
		If two grid diagrams $g$ and $g'$ represent the same singular link $L$, then they can be connected by a finite sequence of commutation and (de)stabilization.
	\end{thm}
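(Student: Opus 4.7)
The plan is to combine the preceding Reidemeister-type theorem with a careful analysis of how singular Reidemeister moves on torus projections lift to grid moves on grid diagrams. The forward direction — that commutation and (de)stabilization preserve the underlying singular link — should be immediate from the definitions: commutations correspond to isotopies of $\alpha$ or $\beta$ curves through regions containing no O, X, or XX marking, while (de)stabilizations correspond to the standard local ``bending'' of a strand near an O-X pair. Both are visibly isotopies of the associated embedded graph in $L(p,q)$, so I would dispatch this direction in a short lemma before turning to the converse.

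For the nontrivial direction, I would proceed in three stages. First, assign to each grid diagram $g$ its canonical regular projection $\pi(g)$ onto the Heegaard torus (cf.\ Definition \ref{regular projection}) and show that if $\pi(g)$ and $\pi(g')$ coincide as decorated immersed multicurves, then $g$ and $g'$ are related by commutations, after a common stabilization to equalize grid sizes: by stabilizing one can refine each diagram to a common grid number, and then any discrepancy in the placement of $\alpha$- or $\beta$-curves between successive markings can be resolved by sliding curves past one another via commutations. Second, apply the previous theorem to conclude that $\pi(g)$ and $\pi(g')$ differ by a finite sequence of singular Reidemeister moves from Figure \ref{fig:rmoves-new}. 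Third, verify that each such move can be realized by a finite sequence of commutations and (de)stabilizations of the grid diagram; together with the first stage, this chains $g$ to $g'$.

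The main obstacle will be the third stage. The classical Reidemeister I, II, III moves will be handled by adapting the arguments from \cite{MR3381987} and their lens space refinement in \cite{MR2429242}, but one must redo the local pictures to account for the $p/q$-twisting of the $\beta$-curves on the Heegaard torus of $L(p,q)$: a local move performed near the seam of the fundamental domain can force an interaction with multiple $\beta$-arcs, and the realization by grid moves must be written down in the universal cover. The singular variants of the Reidemeister moves, which involve XX base points at the vertices of the graph, introduce an additional layer: following the strategy of \cite{MR2574747}, I will need to check that commutations and stabilizations can be performed in the presence of an XX marking without ever forcing two distinct markings into a single square, so that the diagram remains a legitimate grid diagram in the sense of Definition \ref{grid diagram} at every step. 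Once these local realizations are tabulated, the proof concludes by concatenating the sequences produced at each Reidemeister step.
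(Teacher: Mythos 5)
Your plan follows essentially the same route as the paper's proof: reduce via the singular Reidemeister theorem to realizing each Reidemeister move and each isotopy of torus projections by grid moves, adapting the verifications of Cromwell and Dynnikov for the regular moves and of Harvey--O'Donnol for the moves involving vertices. The one ingredient the paper makes explicit that you should add is the preliminary normalization lemma putting the flock of O's of every XX into L-formation (a ``preferred'' diagram) before attempting the local realizations; this is what makes your third stage near the XX markings tractable and is the only genuinely new lemma in the paper's argument.
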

	See \text{Subsection \ref{Moves on torus diagram}, \ref{Moves on grid diagram}} for details on these moves.

	With diagrams in hand, we follow the usual procedure in defining Heegaard Floer homology: form the symmetric product of the Heegaard surface with multiplicity according to its size, use intersection points between real tori formed by $\alpha$ and $\beta$ curves as generators, and define differentials by counting empty rectangles. We will consider three kinds of chain complexes: $CFK^-$, $\widehat{CFK}$, and $\widetilde{CFK}$, and prove the following theorem in \text{Section \ref{Proof of invariance}}.
	
	\begin{thm}
		Fix a singular link $L$ in some lens space $L(p,q)$ with $n$ thick edges and $m$ regular components. If $g_1$ and $g_2$ are  grid diagrams for $(L(p,q),L)$, then we have quasi-isomorphisms \begin{itemize}
			\item $CFK^-(g_1)\simeq CFK^-(g_2)$;
			\item $\widehat{CFK}(g_1)\simeq \widehat{CFK}(g_2)$,
		\end{itemize}
		as relatively bigraded chain complexes of modules over polynomial ring $\FF[U_1,\ldots, U_{2n+m}]$ and over $\FF$ respectively.  
		In particular, we have well-defined homology theories for singular links in lens spaces: \begin{itemize}
			\item Unblocked grid homology: $HFK^-(L)$ as relatively bigraded modules over $\FF[U_1,\ldots, U_{2n+m}]$ and 
			\item Simply blocked grid homology: $\widehat{HFK}(L)$ as relatively bigraded modules over $\FF$.
		\end{itemize}
		Here $\FF=\ZZ/2\ZZ$ or $\ZZ$.
	\end{thm}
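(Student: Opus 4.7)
The plan is to reduce the invariance statement to checking that the elementary grid moves—commutation and (de)stabilization—induce quasi-isomorphisms of $CFK^-$ and $\widehat{CFK}$. By the preceding theorem, any two grid diagrams $g_1,g_2$ for the same singular link in $L(p,q)$ are connected by a finite sequence of commutations and (de)stabilizations, so it is enough to treat a single such move. I would work primarily with $CFK^-$ over $\FF[U_1,\ldots,U_{2n+m}]$; the invariance of $\widehat{CFK}$ then follows by specializing the $U_i$ attached to a chosen base point in each component to zero, once one verifies that the comparison maps built for $CFK^-$ are compatible with this specialization (which they are, since they are defined by domain counts weighted by $U$-powers that respect the chosen base points).

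For commutation I would adapt the pentagon-counting maps of Baker--Grigsby--Hedden and Manolescu--Ozsv\'ath--Szab\'o--Thurston to the singular setting. A chain map $\Phi_{g\to g'}$ between the two complexes is defined by counting empty ``pentagon'' domains on the Heegaard torus of $L(p,q)$ whose interiors avoid the $O$ and $XX$ base points, with $U$-weights recording any $O$-markings crossed by a sibling construction of the differential. The compositions $\Phi_{g'\to g}\Phi_{g\to g'}$ and the reverse composition are shown chain-homotopic to the identity by hexagon counts. The only new wrinkle relative to the regular case is that pentagons and hexagons must also avoid $XX$ base points, which tightens the set of counted domains but preserves all the algebraic identities derived from boundary-and-interior cancellations of degenerations.

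For (de)stabilization I would use the standard mapping-cone decomposition, as in \cite{MR3381987} and its lens-space adaptation. After stabilizing at a distinguished $O$ (or $XX$) base point, the generators split into types $L$ and $R$ according to whether the new intersection point of the added row is on the left or the right; this gives a short exact sequence which I rephrase as a mapping cone $\mathrm{Cone}(\partial^L_R \colon C^L \to C^R)$. A length filtration on the newly added row followed by a cancellation argument identifies this cone with an acyclic perturbation of $CFK^-(g)$, producing the desired quasi-isomorphism. Stabilizations at $XX$ points are treated by the same machinery applied to the associated formal variable $U_{XX}$, and (de)stabilization inverses come from running the mapping cone argument in the opposite direction.

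The hard part will be verifying the stabilization argument in the presence of thick edges and of lens-space winding. Concretely, one must show that the extra rectangles near a newly introduced base point which wind around the torus contribute identically on both sides of the cone, so that no spurious terms obstruct the cancellation; and one must case-split the stabilization according to whether the distinguished base point is an $O$, an $X$ adjacent to a thick edge, or an $XX$ itself, adjusting the domain counts and the $U$-weights in each case. Once this case analysis is checked, each comparison map manifestly respects the relative Maslov and Alexander gradings (they are preserved by pentagon, hexagon, and cone-homotopy maps), so the quasi-isomorphisms are bigraded, completing the proof.
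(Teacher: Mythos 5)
Your overall strategy is the paper's: reduce to a single commutation or (de)stabilization via the completeness of grid moves, handle commutation with pentagon maps $P,P'$ and a hexagon homotopy $H$, and handle stabilization by splitting generators according to whether they contain a distinguished intersection point and identifying $CFK^-(g')$ with a mapping cone over $CFK^-(g)[U_{\text{new}}]$ that collapses to $CFK^-(g)$ (the paper realizes your ``acyclic perturbation'' as $\mathrm{Cone}(U_{\text{new}}-U_j)$, using the maps $H_{X_{\text{new}}}$, $H_{O_{\text{new}}}$, $H_{X_{\text{new}},O_{\text{new}}}$ and the algebraic lemma $H(\mathrm{Cone}(U_{\text{new}}-U_j))\cong H(C)$). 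You also correctly identify the lens-space-specific danger (the $M=2$ decompositions coming from domains that wind around the torus), which the paper disposes of by quoting Celoria's analysis of $N(D)$.

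The one genuine gap is your final claim that the comparison maps ``manifestly respect the relative Maslov and Alexander gradings.'' In this setting that is not manifest: the gradings are defined through the lifted coordinate function $\widetilde{W}$ and the pairing $\mathcal{J}$, and a pentagon or hexagon is not itself a rectangle, so neither the defining formulas nor the rectangle-counting grading lemma apply to it directly. The paper resolves this by completing each pentagon $p$ to a rectangle $p\ast t_{\xv}$ (and each hexagon $h$ to $h\ast B$), lifting everything to the $p$-fold universal cover $S^3$ where the covering formula $M(\widetilde{\xv})-M(\widetilde{\yv})=p\,(M(\xv)-M(\yv))$ holds, and then running the $S^3$ computation on the lifted domains; an analogous lift is needed to pin down the bidegrees $(-1,-1)$, $(1,1)$, $(1,0)$ of the three stabilization maps. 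Without some version of this argument your proof establishes a quasi-isomorphism of ungraded complexes only. Two smaller points: there is no formal variable attached to an $XX$ mark --- an $XX$-stabilization still introduces its new variable via a new $O$ mark, so the cone is always taken over $U_{\text{new}}-U_j$ for $O_j$ on the same thin edge or $S^1$ component (and the independence of the choice of $j$ uses the chain homotopy $U_i\sim U_j$); and in $L(p,q)$ the new $\alpha$ and $\beta$ curves meet in $p$ points, so the ``left/right'' dichotomy must be replaced by singling out the one corner $c$ shared by the new $X$ and the stabilized mark.
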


	Having defined grid homology for singular links in lens spaces, we consider a resolution cube for grid homology of links in lens spaces, i.e., generalize the result in \cite{MR2574747} via a purely combinatorial approach. We will introduce the notion of a special grid diagram (\text{Definition \ref{special grid diagram}}) in which we fix a way of visualizing a link on a grid diagram such that each crossing looks like one of the standard pictures shown in \text{Figure \ref{fig:standard picture at each crossing}}. Using such a diagram, we perform resolution of crossings (see \text{Figure \ref{fig:resolution of crossings}}) diagrammatically and prove a skein exact sequence.
	
	\begin{thm}
		Consider a special diagram $g$ for a pair $(L(p,q),L)$. Choose a crossing $c$ in $g$. Denote the resulting grid diagram (using the procedure in \text{Figure \ref{fig:grid diagram of resolution and singularization}}) for smoothing and singularization at $c$ by $g_r$, $g_s$ and call the corresponding links $L_r$ and $L_s$, respectively. Let $O_a$, $O_b$ be marks sharing the same column with the newly formed $XX$ base point, and $O_c$, $O_d$ be marks sharing the same row with the newly formed $XX$ base point. Then we have the following:
		\begin{itemize}
			\item When c is positive, there is an exact sequence:
			\begin{equation}
				\ldots\longrightarrow HFK^-(L)\longrightarrow H_*(\frac{CFK^-(L_s)}{U_a+U_b-U_c-U_d}) \longrightarrow HFK^-(L_r)\longrightarrow HFK^-(L)\longrightarrow\ldots
			\end{equation}
			
			\item When $c$ is negative, there is an exact sequence:
			\begin{equation}
				\ldots\longrightarrow HFK^-(L) \longrightarrow HFK^-(L_r)\longrightarrow  H_*(\frac{CFK^-(L_s)}{U_a+U_b-U_c-U_d}) \longrightarrow HFK^-(L)\longrightarrow\ldots
			\end{equation}
			
		\end{itemize}
	\end{thm}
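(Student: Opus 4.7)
The plan is to adapt the $S^3$-strategy of \cite{MR2574747}, along with Manolescu's untwisted reformulation \cite{MR3229041}, to the lens space setting by constructing a mapping-cone short exact sequence relating $CFK^-(g)$, $CFK^-(g_r)$, and $CFK^-(g_s)/(U_a+U_b-U_c-U_d)$, and then passing to the associated long exact sequence in homology.

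First I would set up the local geometric comparison. Since $g$, $g_r$ and $g_s$ share the same system of $\alpha$- and $\beta$-curves and differ only by the placement of $O$, $X$, $XX$ marks inside the $2\times 2$ block around $c$ governed by the standard crossing picture, their generating sets are canonically identified; only the differentials differ, namely which rectangles are counted as empty and which $U$-variables are attached to them. The algebraic role of the quotient relation $U_a+U_b-U_c-U_d$ is that around an $XX$ mark there is a pair of narrow annular domains whose contributions to a candidate $\partial^2$ computation add up to $U_a U_b - U_c U_d$, and this discrepancy is exactly cancelled modulo $U_a+U_b-U_c-U_d$.

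Second, following the pentagon-counting idea of \cite{MR2574747}, I would define a chain map $\psi$ between two of the three complexes by counting empty pentagons supported in a neighborhood of the block: $\psi\colon CFK^-(g_r)\to CFK^-(g)$ when $c$ is positive, and $\psi\colon CFK^-(g)\to CFK^-(g_r)$ when $c$ is negative. This change of direction is what swaps the position of $HFK^-(L_r)$ between the two stated exact sequences. I would then identify the mapping cone of $\psi$ with $CFK^-(g_s)/(U_a+U_b-U_c-U_d)$ by a generator-by-generator match of differentials, with the quotient relation absorbing the extra rectangles whose basepoint content differs between $g$ and $g_s$. The resulting mapping-cone long exact sequence is exactly the desired skein triangle.

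The principal obstacle will be the domain enumeration on the toroidal grid for $L(p,q)$. Unlike the $S^3$ case, the $\alpha$- and $\beta$-curves wind $p$ times around the Heegaard torus, so there may a priori be additional rectangles or pentagons that wrap around the torus. I would use the framework of \cite{MR2429242} together with the invariance machinery developed earlier in the paper to argue that the only contributions relevant to the local computation come from domains entirely supported in a small standard neighborhood of $c$, thereby reducing the verification to a finite local calculation essentially identical to the $S^3$ case. Once the skein sequence is proven over $\FF=\ZZ/2\ZZ$, extending to $\ZZ$ amounts to checking that the sign assignment of \cite{Celoria_2023} as refined by \cite{tripp2021gridhomologylensspace} is compatible with each pentagon and rectangle identification used above, a routine but bookkeeping-intensive verification on the faces of the domains appearing in the mapping cone.
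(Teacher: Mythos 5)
Your proposal has the right global shape (three complexes fitting into an exact triangle whose long exact sequence is the skein sequence), but the core construction is not correct as written. First, the generating sets of $g$, $g_r$ and $g_s$ are \emph{not} all canonically identified: singularization merges two rows and two columns into a single singular row and column, so $g_s$ has grid size one less, and $\mathcal{S}(g_s)$ is in bijection only with the subset $\mathcal{P}\subset\mathcal{S}(g)=\mathcal{S}(g_r)$ of generators containing the distinguished corner $p$ shared by the two crossing $X$'s (this is Lemma \ref{g_s isom to Z} in the paper). Second, pentagon counting is the wrong tool here: $g$ and $g_r$ have literally the same $\alpha$- and $\beta$-curves and differ only in the placement of two $X$-markings, so there is no combined diagram with a displaced curve and hence no pentagons; pentagons belong to the proof of commutation invariance, not to the skein sequence of \cite{MR2574747}. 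Moreover, the cone of any map $CFK^-(g_r)\to CFK^-(g)$ has underlying module $CFK^-(g_r)\oplus CFK^-(g)$, whose generator count cannot match that of $CFK^-(g_s)/(U_a+U_b-U_c-U_d)$, so the proposed ``generator-by-generator match of differentials'' cannot go through.

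The paper's argument instead splits $\mathcal{S}(g_r)=\mathcal{P}\cup\mathcal{C}$ according to membership of $p$, takes $Z$ and $Y$ to be the corresponding sub- and quotient complexes, and defines rectangle-counting maps $\Phi_A\colon Y\to Z$ and $\Phi_B\colon Z\to Y$ that count empty rectangles passing once through the $A$- resp.\ $B$-markings. Then $CFK^-(g_r)=\mathrm{Cone}(\Phi_A)$, $CFK^-(g)=\mathrm{Cone}(\Phi_B)$, $Z\cong CFK^-(g_s)$, and crucially $\Phi_A\circ\Phi_B=\Phi_B\circ\Phi_A=U_c+U_d-U_a-U_b$ (Lemma \ref{composition to U multiplication}): the linear relation arises from the four thin annuli through the $A$, $B$ markings, not from a $\partial^2$ discrepancy of the form $U_aU_b-U_cU_d$ as you suggest (that product relation is Proposition \ref{homotopic U_i action}(2) and plays a different role). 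The skein triangle is then the exact triangle $\mathrm{Cone}(f)\to\mathrm{Cone}(g\circ f)\to\mathrm{Cone}(g)$ applied to this composition. Your proposal contains no analogue of the composition identity, which is the essential input; without it there is no mechanism producing the quotient by $U_a+U_b-U_c-U_d$. Your final point about wrap-around domains is fine and is exactly what Remark \ref{rmk on N(D)} handles, but it does not repair the missing construction.
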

	In Figure \ref{fig:AB diagramchap1}, we show a combined grid diagram for resolutions of crossings. (Left one for negative and right one for positive.) This nice local picture will be used to prove the skein exact sequence in Section \ref{Resolution cube}.
	
	\begin{figure}
		\begin{overpic}[width=0.80\textwidth]{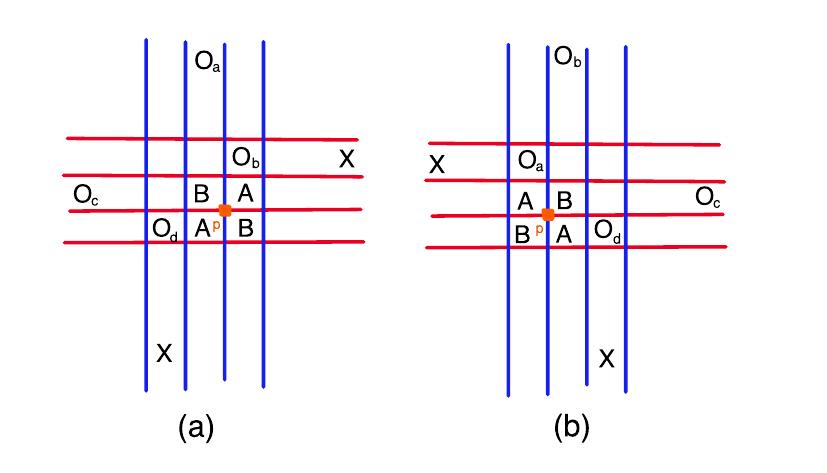}
		\end{overpic}
		
		\caption{\textbf{Combined diagram for resolution of crossing.}}
		\label{fig:AB diagramchap1}

	\end{figure}
	By considering resolutions of all crossings in a diagram, we can form a resolution cube. Using the skein relations and a direct computation of higher differentials, we show the following.

	\begin{thm}
		Let $g$ be a special grid diagram for some possibly singular link $L$ in $L(p,q)$. Assume at each crossing, $g$ looks like the standard picture in \text{Figure \ref{fig:standard picture at each crossing}}. Then there is a spectral sequence converging to $HFK^-(g)$ whose $E_1$ term is the resolution cube with $d_1$ induced by some homomorphism from the skein exact sequence.
	\end{thm}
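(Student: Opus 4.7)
The plan is to realize the cube of resolutions as an iterated mapping cone at the chain level, and then read off the spectral sequence from the resulting cube filtration. Number the crossings $c_1,\ldots,c_N$ of the special diagram $g$. For each vertex $v=(\varepsilon_1,\ldots,\varepsilon_N)\in\{0,1\}^N$ of the cube, let $g_v$ be the grid diagram obtained by taking the oriented smoothing at each $c_i$ with $\varepsilon_i=0$ and the singularization at each $c_i$ with $\varepsilon_i=1$, following the procedure of Figure \ref{fig:grid diagram of resolution and singularization}. To each vertex $v$ assign a chain complex $C_v$, equal to $CFK^-(g_v)$ at the smoothed coordinates and to the relevant quotient $CFK^-(g_v)/(U_a+U_b-U_c-U_d)$ at each singularized coordinate, with the sign of the original crossing determining which of the two skein exact sequences from the previous theorem governs that coordinate.

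The first step is to promote each skein exact triangle to a chain-level mapping cone. Revisiting the proof of that theorem, the connecting morphism at crossing $c$ is induced by a concrete chain map $f_c : C_v \to C_{v'}$ (where $v$ and $v'$ differ only in the $c$-coordinate) that counts rectangles through the newly formed XX marking at $c$, and its mapping cone is quasi-isomorphic to $CFK^-$ of the diagram in which $c$ has been left unresolved. Iterating this over all $N$ crossings and assembling the edge maps together with internal differentials produces a total complex whose underlying module is $\bigoplus_{v\in\{0,1\}^N} C_v$ and whose differential splits as a sum over subsets $S\subset\{1,\ldots,N\}$ of maps $f_S$ that lower $|v|$ by one in each coordinate of $S$. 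Inductive application of the skein mapping-cone description shows that the total complex is quasi-isomorphic to $CFK^-(g)$.

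With the cube in place, the spectral sequence is the standard one associated with the filtration by cube weight $|v|=\sum_i\varepsilon_i$. Since $|v|$ takes only the finitely many values $0,1,\ldots,N$ the filtration is bounded, so the spectral sequence converges to the homology of the total complex, namely $HFK^-(g)$. On $E_0$ only the internal differentials $\partial_v$ on each $C_v$ survive, so $E_1$ is exactly the resolution cube $\bigoplus_v H_*(C_v)$, and the induced $d_1$ is the edge map $f_{c_i}$ on each edge, which by construction realizes the connecting homomorphism of the appropriate skein exact sequence.

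The main obstacle is verifying that the iterated mapping-cone construction closes up consistently, i.e.\ that the higher maps $f_S$ can be chosen so that $\partial^2=0$ on the total complex. Equivalently, every square face of the cube must commute up to a prescribed homotopy, every 3-face up to a higher homotopy, and so on. The natural route is to identify each $f_S$ with a count of rectangles (together with their degenerations) that simultaneously pass through the XX markings introduced at every crossing in $S$, and then to derive the required quadratic relations from the boundary classification of one-dimensional moduli spaces of such configurations, following the template of \cite{MR2574747} and Sections 4--5 of \cite{Beliakova2022APO}. The novelty in the lens space setting is that $\alpha$ and $\beta$ curves wind around the Heegaard torus with multiplicity $p$, so rectangles may wrap nontrivially, but the Baker--Grigsby--Hedden combinatorics together with the Celoria--Tripp sign conventions already accommodate this; the verification reduces to a finite case analysis of local rectangle configurations near the inserted XX pairs, after which the spectral sequence follows from standard homological algebra.
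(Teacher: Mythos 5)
Your high-level strategy (assemble a cube-shaped total complex quasi-isomorphic to $CFK^-(g)$, filter by cube weight, take the associated spectral sequence) is the same as the paper's, and the identification of the $E_1$ page and $d_1$ is correct. But there is a genuine gap at exactly the point you flag as the ``main obstacle'': you do not construct the higher maps $f_S$ for $|S|\ge 2$, nor verify that the square faces commute up to the required homotopies, and the route you propose (boundary classification of one-dimensional moduli spaces, following the analytic template of \cite{MR2574747}) is not what makes the theorem work here. The entire reason the theorem is stated for a \emph{special} grid diagram with the standard local picture at each crossing is that conditions (1), (2) and (4) of Definition \ref{special grid diagram} force every candidate higher map to vanish identically: a single empty rectangle avoiding all other X and XX markings can be of type $A_c$, $B_c$ or $AB_c$ for at most one crossing $c$, so the rectangle-counting maps $\Phi_{I,J,K}$ are zero whenever $K\ne\emptyset$ or $|I\cup J|>1$ (Remark \ref{rmk on phi ijk}), and $\Phi_A\circ\Phi_B$ equals multiplication by $-u^{(c)}$ on the nose with no correcting homotopy (Lemma \ref{composition to U multiplication}). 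Without this observation your total complex is simply not defined, since you have neither exhibited the $f_S$ nor shown $\partial^2=0$.

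A secondary issue: you place the literal quotient complexes $CFK^-(g_v)/(U_a+U_b-U_c-U_d)$ at the singularized vertices and claim the unresolved complex is the mapping cone of a chain map between the two vertex complexes. At the chain level this is not quite right. For a positive crossing, $CFK^-(g)=\mathrm{Cone}(\Phi_B\colon Z\to Y)$ and $CFK^-(g_r)=\mathrm{Cone}(\Phi_A\colon Y\to Z)$, where $Y$ is only a sub- or quotient complex, not the full complex of either resolution, and the singularized object arises as $\mathrm{Cone}(\Phi_A\circ\Phi_B)=\mathrm{Cone}(u^{(c)})$. This is why the paper's chain-level model $(C,D)$ is indexed by partitions of $C(g)$ into \emph{four} sets $(I,J,K,L)$, encoding the two-step filtration $Z\to Y\to Z$ at every crossing, with identity and $u^{(c)}$ components of $D$ linking partitions with $|K_1\Delta K_2|=1$; the identification of $(C,D)$ with $CFK^-(g)$ is then a sub-quotient cancellation argument rather than an induction on mapping cones. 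Your iterated-cone picture can likely be reconciled with this up to quasi-isomorphism, but as written it skips the bookkeeping that makes the total complex an honest chain complex.
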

	
	We will also consider all these theories over $\ZZ$ via a sign assignment for lens space grid diagrams constructed in \cite{Celoria_2023}. After describing how to assign a sign to each rectangle in a grid diagram, we will reprove all the results over $\ZZ$. Using the universal coefficient theorem, this construction allows us to consider grid homology over any base field. In particular, one can use $\QQ$ as the base ring so that we can relate our theory to existing constructions that are only defined over rational numbers.
	
	In \cite{Sarkar2010ANO}, Sarkar introduced the concept of weak equivalence between coherent orientation systems and the concept of weak equivalence between sign assignments. It was demonstrated that for an $l$-component link in $S^3$, both kinds of equivalence classes form a set of size $2^{l-1}$ and there is a natural bijection between these sets. In \cite{eftekhary2018signassignmentslinkfloer}, Eftekhary showed that orientation systems with negative boundary degeneration can be identified with true sign assignments for grid homology in $S^3$, which was introduced in \cite{MR2372850}. Note that our sign assignments coincide with theirs when the background manifold is $S^3$. This contrasts with the positive ones considered in \cite{MR3412088} which will lead to `false' sign assignments according to their notation.  Despite this distinction, one can easily check that true and false assignments are weakly equivalent in the sense of \cite{Sarkar2010ANO}. Similarly, orientation systems with negative and positive boundary degeneration are also weakly equivalent in sense of \cite{Sarkar2010ANO}. Consequently, one can generalize the argument presented in \cite{Sarkar2010ANO} to see that grid homology with true assignment is isomorphic to knot Floer homology with canonical orientation system, as defined in the context of \cite{MR3412088}. In fact, all these concepts could be generalized to lens spaces, and the last isomorphism remains true in this broader context. We will prove the following theorem in the last section as a simple illustration of this and  we will come back to this point in a future work.
	\begin{thm}
		Various versions of grid homologies for knots and links in lens spaces defined using true and false assignments over $\ZZ[U_1,\ldots,U_{2n+m}]$ coincide.
	\end{thm}

	\begin{rmk}(Convention about integer coefficients)
		  Using these facts, our result can be regarded as a resolution cube for the classical knot Floer homology, rather than being limited solely to the combinatorially defined grid homology. Additionally, we would like to highlight that the canonical orientation system from \cite{MR3412088} was used by Dowlin in \cite{MR4777638} to describe the oriented resolution cube of knot Floer homology, so our result, when restricted to $S^3$, gives rise to a combinatorial description of that cube. We hope this may be helpful for further application.
	\end{rmk}

	The paper is organized as follows: In \text{Section \ref{Singular link and grid diagram}}, we first define singular links and construct grid diagrams for them; then we show Reidemeister theorem for torus projections and introduce a complete set of grid moves. In \text{Subsection \ref{Construction of chain complex}-\ref{Definition of homology theories}}, we define the homology theories and introduce two grading systems on them. In \text{Subsection \ref{Computing examples}}, we compute fully blocked and simply blocked grid homology for some singular knots in $L(2,1)$ and $L(3,1)$. In \text{Subsection \ref{Proof of invariance}}, we demonstrate grid move invariance of our theories, which shows that they are well-defined link invariants. In \text{Section \ref{Resolution cube}}, we first introduce resolutions of a crossing and then prove the existence of the skein exact sequence and the resolution cube mentioned above. Finally, we extend all the results to have base ring $\ZZ$ in \text{Section \ref{An oriented version}}. In \text{Subsection \ref{Orientation system versus sign assignment}}, we generalize the argument from \cite{Sarkar2010ANO} and \cite{eftekhary2018signassignmentslinkfloer} to show that when using a grid diagram as a Heegaard diagram, the grid homologies over $\ZZ$ we defined here coincide with the oriented theory defined using orientation system constructed in \cite{MR3412088}. This justifies our notation ``$HFK^\circ$'' and relates our construction to the original knot Floer theory.
	

	\Acknowledgement 
	
	First of all, the author would like to thank her advisor Prof. Jiajun Wang for his detailed instruction and also Prof. Yi Ni for his helpful advice and encouragement. Secondly, the author thanks  Ioannis Diamantis, Hongjian Yang,and Yicheng Yang for their helpful discussion. She would also express sincere gratitude to Prof. Jianfeng Lin and Prof. Yi Xie, who exposed her to grid homology and Dowlin spectral sequence, respectively, both of which motivate this work. Last but not least, the author appreciates Qing Lan,Quan Shi, and Jinyi Wang for their assistance with computer programming, Zhenkun Li and Fan Ye for their instruction in writing and drawing, as well as her family members for their unconditional support.
	
	\section{Singular link and grid diagram}\label{Singular link and grid diagram}
	\subsection{Definitions}
	
	\begin{defi}\label{special oriented trivalent graph}
		An oriented trivalent graph consisting of vertices and thick/thin edges is \emph{special} if 
		\begin{itemize}
			\item at each vertex, there is exactly one thick and two thin edges.
			\item the orientation satisfies that at each vertex, either the thick edge points in and the two thin edges point out, or the thick edge points out while the two thin edges point in.
		\end{itemize}
		Here, we do not require the graph to be connected, and closed components with a single thin circle and no vertex are allowed.(We will call such ``circles'' regular $S^1$ components, and we require these to be oriented if they exist.)
	\end{defi}
	Note that not every trivalent graph with thick edges can be endowed with an orientation so that it becomes special. We only take oriented ones into our consideration. See \text{Figure \ref{fig:singular-links}} for examples and non-examples.
	
	\begin{figure}
		\centering
		\includegraphics[width=0.6\linewidth]{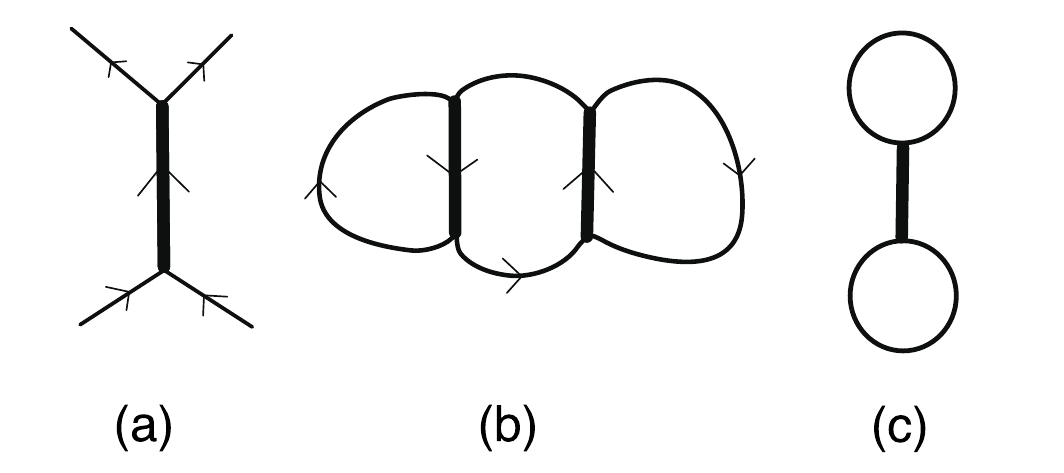}
		\caption{\textbf{Examples and a non-example of special trivalent graph.} (a) orientation at each thick edge; (b) an example of special trivalent graph; (c) non-orientable example}
		\label{fig:singular-links}.
	\end{figure}
	\begin{defi}\label{singular link1}
		A \emph{singular link} $L$ inside a 3-manifold $Y$ is a (piecewise linear) embedding of a special trivalent graph into $Y$. 
	\end{defi}
	
	For further use, we give an alternative definition of singular link as a transverse spatial graph following \cite{MR3677933}. Let $\DCAL$ be the disk endowed with a triangulation as shown in \text{Figure \ref{fig:standard-disk}} which has four vertices, six edges and three 2-simplices. There is a unique vertex $v_0$ lying in the interior of $\DCAL$. In this case, we say that $\DCAL$ is a standard disk and $v_0$ is the vertex associated to $\DCAL$.
	
	\begin{figure}
		\begin{overpic}[width=0.3\textwidth]{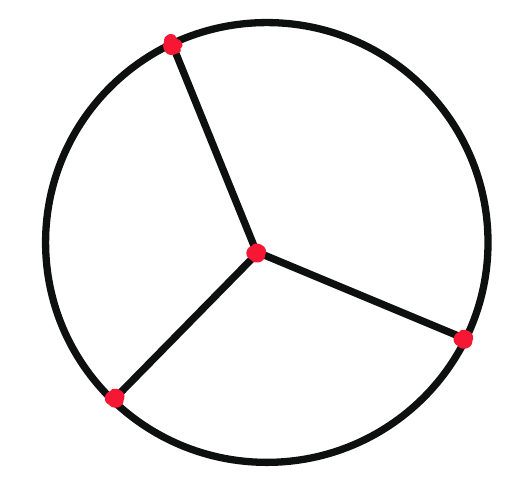}
			\put(15,28) {$v_2$}
			\put(46,12) {$e_6$}
			\put(44,70) {$e_1$}
			\put(82,38) {$v_3$}
			\put(24,88) {$v_1$}
			\put(62,47) {$e_2$}
			\put(30,45) {$e_3$}
			\put(48,40) {$v_0$}
			\put(0,50) {$e_4$}
			\put(86,72) {$e_5$}
			
		\end{overpic}
		
		\centering
		\caption{\textbf{A standard disk with its triangulation.}}
		\label{fig:standard-disk}
	\end{figure}
	
	Take any oriented graph $G_0$, we construct an oriented disk graph $G$ associated to $G_0$ as follows. For each vertex $v$ of $G_0$, glue a standard disk $\DCAL$ to $G_0$ by identifying the vertex associated to $\DCAL$ to $v$. In this case, we say $G_0$ is the underlying oriented graph of $G$, since it appears naturally as part of the 1-skeleton of $G$. A vertex (resp. edge) of $G_0$ will be called a graph vertex (resp. edge). Here we allow $G_0$ to have closed components, each consisting of a single circle and no vertex. Again, such circles will be referred to as regular $S^1$ components and no disk is glued to them when forming the associated disk graph.
	
	\begin{defi}\label{singular link2}
		A \emph{singular link} in a 3-manifold $Y$ is a transverse embedding of a 4-valent oriented disk graph with exactly two graph edges going out and two graph edges pointing in at each graph vertex. Here, the transverse embedding means that at each graph vertex, the local picture must look like \text{Figure \ref{fig:transverse-disk-1}}: the vertex has a closed $B^3$ neighborhood in $Y$ such that its standard disk embeds properly in this copy of $B^3$ and separates the incoming and outgoing edges.
	\end{defi}

	\begin{figure}
		\centering
		\includegraphics[width=0.35\linewidth]{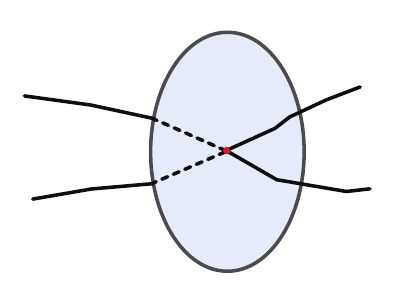}
		\caption{\textbf{A transverse disk that separates incoming and outgoing edges}}
		\label{fig:transverse-disk-1}
	\end{figure}
	
	In this second definition, though we are embedding a 2-dimensional object into $Y$, we will regard the underlying oriented graph as a singular link $L$ and remember the standard disks as auxiliary information that adds restrictions to the isotopies allowed.
	
	To see that the definitions are equivalent, we consider the following operations: First, assume we are in \text{Definition \ref{singular link1}}. For each thick edge, take a small disk centered at the midpoint of the thick edge, ensuring it is perpendicular to the edge and disjoint from all other edges or $S^1$ components. Then, by contracting the thick edges to their midpoints, we obtain the picture in the second definition. Going backward, we replace each graph vertex by a thick edge together with two new vertices, so that the thick edge meets the disk transversely at its center. The thick edge admits a unique orientation that fits the new graph into \text{Definition \ref{special oriented trivalent graph}}. See \text{Figure \ref{fig:equiv-between-2-definitions}} for an illustration. We remark that a special trivalent graph can also be regarded as a bipartite graph defined in \cite{bao2018floerhomologyembeddedbipartite}. 
	\begin{figure}
		
		\centering
		\includegraphics[width=0.6\linewidth]{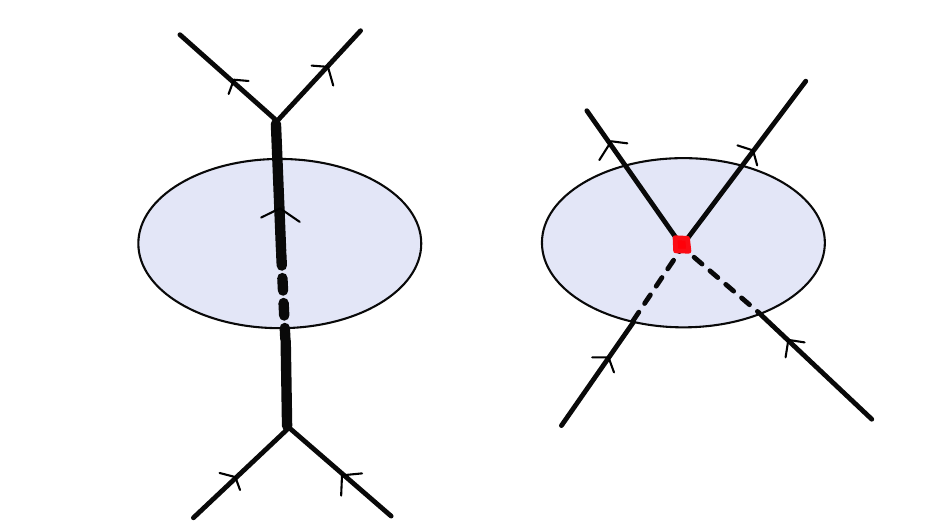}
		\caption{\textbf{Transformation between two definitions of singular links.} By performing local moves at each vertex, we can transition between thick edge convention(left) and transverse graph convention(right).}
		\label{fig:equiv-between-2-definitions}
	\end{figure}
	
	We say a singular link is a singular knot if the underlying graph is connected.

	\begin{defi}
		Fix a special trivalent graph or oriented disk graph $L$. We say two singular links $i_1:L\to Y$ and $i_2:L\to Y$ are \emph{equivalent} if there exists a piecewise linear homeomorphism $f: Y\to Y$ that is isotopic to the identity such that $i_1\circ f=i_2$.
	\end{defi}
	In what follows, we always take $Y=L(p,q)$ for a pair of integer $p,q$ satisfying $gcd(p,q)=1$ and $-p<q<p$.
	
	\subsection{Construction of grid diagrams}\label{Construction of grid diagrams}
	Let $L$ be a (possibly) singular link with $n$ thick edges in some lens space $L(p,q)$. We can assume without loss of generality that $gcd(p,q)=1$ and $-p<q<p$. Our goal in this subsection is to define and construct grid diagrams for such pairs $(L(p,q),L)$.

	Grid diagram currently exists for possibly singular links in $S^3$ and regular links in $L(p,q)$, see \cite{MR2480614},\cite{MR2429242}, and \cite{MR2491583}. In \cite{MR3677933}, they also defined grid diagrams for transverse spatial graphs in $S^3$. We shall show it also exists for singular links in $L(p,q)$. 
	
	Consider the standard Heegaard splitting of $L(p,q)$ into two genus one handlebodies: $L(p,q)=V_\beta\cup_{\Sigma}V_\alpha$. Here $\Sigma=T^2$ is a genus one closed surface. We choose a pair of curves $\mu,\lambda$ on it that are smooth generic representatives of a symplectic basis of its first homology. See \text{Figure \ref{fig:heegaard-torus}}. A curve on $\Sigma$ is said to have slope $\frac{p}{q}$ if it is homologous to $p\mu+q\lambda$. 
	\begin{figure}
		\begin{overpic}[width=0.4\textwidth]{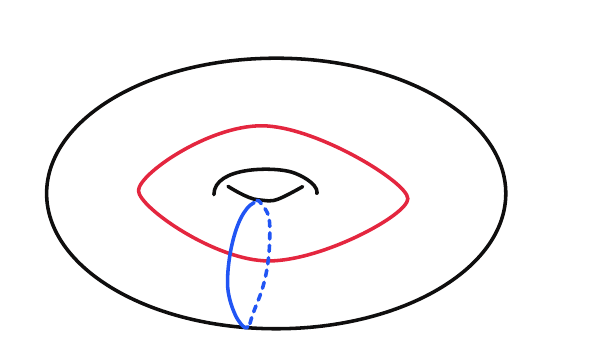}
			\put(60,35) {$\lambda$}
			\put(46,10) {$\mu$}

		\end{overpic}
		\centering
		\caption{\textbf{Meridian and longitude on a Heegaard torus}}
		\label{fig:heegaard-torus}
	\end{figure}
	
	\begin{figure}
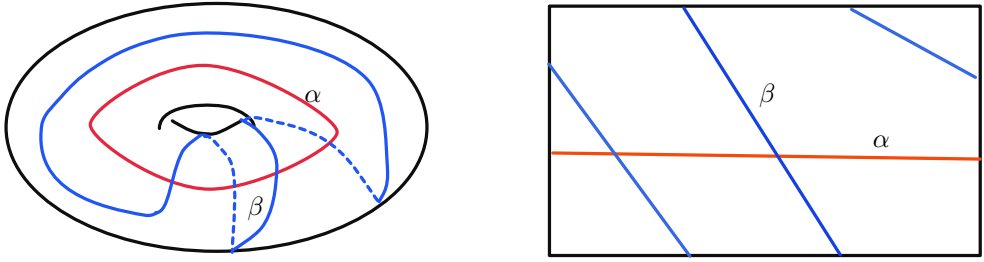

		\centering
		\begin{overpic}[width=1\textwidth]{heegaardsplittingofrp3.pdf}
			\put(30,20) {$\alpha$}
			\put(25,10) {$\beta$}
			
			\put(80,16) {$\alpha$}
			\put(70,20) {$\beta$}
		\end{overpic}
		\caption{\textbf{Standard Heegaard splitting for $RP^3$.} The left is the torus picture,  and the right is the one after cutting open along a pair of meridian and longitude. }
		\label{fig:heegaard-splitting-of-rp3}
	\end{figure}
	
	In this terminology, the standard Heegaard splitting of $L(p,q)$  is given by a $\beta$ curve with slope $-\frac{p}{q}$ and an $\alpha$ curve with slope 0. See \text{Figure \ref{fig:heegaard-splitting-of-rp3}} for $L(2,1)=RP^3$ as an example.
	
	From now on, we fix the convention that in a grid diagram, $\alpha$ curves will be red, and $\beta$ curves will be blue.
	
	\begin{defi}\label{grid diagram}
		A \emph{grid diagram} is a multi-based Heegaard diagram $(T^2, \AV,\BV,\OV,\XXV)$ of $L(p,q)$ as follows:
		\begin{itemize}
			\item The background rulings are given by same number of parallel copies of $\beta$ and $\alpha$ curves. In such a diagram, an annulus cut out by adjacent parallel $\alpha$ (resp. $\beta$) circles is called a row (resp. column).  $\AV$ and $\BV$ will denote the set of $\alpha$ and $\beta$ curves respectively. A connected component in $\Sigma-\AV-\BV$ will be referred to as a cell.
			
			\item The base points are put in cells so that each cell has at most one base point. Each row or column is either singular or regular in the following sense: \begin{itemize}
				\item In a regular row or column, we have one pair of $O$ and $X$ base points;
				\item In a singular row or column, we have one $XX$ base point together with two $O$ base points.
			\end{itemize}  
		\end{itemize}
		
	\end{defi}
	
	It follows from the definition that in a grid diagram, we have the same number of $O$, $X$ base points (with $XX$ base points counting twice) and the same number of rows and columns.
	
	Each grid diagram $g$ can be associated with a unique singular link $L$. That is, we can reconstruct a torus projection (See \text{Definition \ref{grid diagram of a pair}}) of $L$ and recover the embedding of $L$ in $L(p,q)$ from $g$ as follows.
	\begin{enumerate}
		\item Connect each $O$ base point to the $X$ or $XX$ base points in the same row.
		\item Connect each $X$($XX$) base point to the single (or two) $O$ base point in the same column.
	\end{enumerate}
	Here, we follow the rule that horizontal segments always go under those slanted (vertical) ones.
	
	Note that each $XX$ base point plays the role of two regular $X$ base points, i.e., there are two segments pointing to it and two segments going out from it.
	\begin{defi}\label{grid diagram of a pair}
		Now that we have a link diagram living on the Heegaard surface, it can be regarded as the projection of $L$ to $\Sigma$ using the Morse flow of a generic Morse function compatible with the standard Heegaard splitting. We call this picture a \emph{ torus projection} of $L$. 
		
		By further pushing the horizontal segments into $V_\alpha$ and those slanted ones into $V_\beta$, we recover the singular link $L$ in $L(p,q).$ 
		
		In this case, we say $g$ is a \emph{grid diagram for the pair} $(L(p,q),L)$.
	\end{defi}
	
	\begin{prop}
		Every singular link in a lens space $L(p,q)$ admits a grid diagram.
	\end{prop}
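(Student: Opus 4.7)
The plan is to construct such a grid diagram directly, extending the rectilinear-projection construction of \cite{MR2429242} for regular lens-space links by the base-point scheme of \cite{MR2574747} for singular links in $S^3$. Starting from an embedding $i:L\to L(p,q)$ in the thick-edge convention of Definition~\ref{singular link1}, I would first put $i$ in general position with respect to the standard Heegaard splitting and project to $\Sigma$ via the Morse flow of a compatible Morse function. After a small generic perturbation, each thin edge maps to a smoothly embedded arc with only transverse double points, and each thick edge contracts to a single $4$-valent singular vertex with exactly two incoming and two outgoing edges, as forced by the orientation condition in Definition~\ref{special oriented trivalent graph}.

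Next, I would isotope the projection to an $(\alpha,\beta)$-rectilinear staircase: each arc becomes a concatenation of horizontal segments (parallel to $\alpha$, slope $0$) and slanted segments (parallel to $\beta$, slope $-p/q$); each transverse crossing is placed in the horizontal-under-slanted configuration; and each singular vertex is locally modelled by one horizontal-in, one horizontal-out, one slanted-in, and one slanted-out edge. Existence of such a staircase follows from the argument in \cite{MR2429242}: lift to the universal cover $\RR^2$, approximate each smooth arc rel endpoints by a staircase in the two preferred directions, and descend to $\Sigma$ once sufficiently many parallel $\alpha,\beta$ curves have been inserted to compensate for the fact that these two directions span $H_1(T^2)$ only up to index $p$. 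The local adjustment at each singular vertex is always possible because the two-in two-out orientation matches the standard local model.

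I would then refine the grid by adding further parallel $\alpha$ and $\beta$ circles until every staircase corner, every crossing, and every singular vertex occupies its own cell, and populate the base points by placing an $O$ in each cell where a slanted segment ends and a horizontal begins (along the orientation), an $X$ in each cell where a horizontal ends and a slanted begins, and an $XX$ in the cell containing each singular vertex. The two-in two-out configuration at a singular vertex ensures that its row and column each carry exactly one $XX$ together with two $O$'s, while every remaining row and column carries exactly one $O$ and one $X$. The horizontal-under-slanted reconstruction of Definition~\ref{grid diagram of a pair}, followed by pushing horizontals into $V_\alpha$ and slanted segments into $V_\beta$, then recovers the singular link $L$ up to isotopy.

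The main obstacle is the global consistency of the rectilinear step: each rectilinear replacement of a smooth arc may introduce wrap-around segments (since $\alpha,\beta$ span $H_1(T^2)$ only up to index $p$) that can collide with other features of the diagram, and at a singular vertex one has to choose which of the two outgoing edges to route horizontally versus slantedly. Both issues are handled, as in \cite{MR2429242}, by iterated subdivisions of the grid and small local isotopies, after which the placement of base points and the verification of Definition~\ref{grid diagram} reduce to routine combinatorial bookkeeping.
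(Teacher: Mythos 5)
Your overall route is the same as the paper's: a direct construction that puts $L$ in a standard position with respect to the Heegaard splitting (the paper's ``grid position'', with arcs alternating between meridian disks of $V_\alpha$ and $V_\beta$, is just the three-dimensional counterpart of your rectilinear staircase), then inserts enough parallel $\alpha$, $\beta$ circles and marks cells with $O$, $X$, $XX$. The treatment of regular rows and columns and the wrap-around/subdivision issues are handled as in \cite{MR2429242}, exactly as you say.

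There is, however, a concrete error in your local model at a singular vertex, and it breaks the base-point count you need. You model the vertex as ``one horizontal-in, one horizontal-out, one slanted-in, one slanted-out'' and then claim its row and column each carry one $XX$ together with two $O$'s. Under the reconstruction convention of Definition~\ref{grid diagram of a pair}, horizontal segments run from an $O$ to an $X$/$XX$ in the same row and slanted segments run from an $X$/$XX$ to an $O$ in the same column; an $XX$ is two merged $X$'s, so \emph{both} of its horizontal segments must be incoming (from the two $O$'s in its row) and \emph{both} of its slanted segments must be outgoing (to the two $O$'s in its column), or the row/column roles swapped. With your mixed model the vertex cell would have to act simultaneously as an $O$ (for the horizontal-out and the slanted-in) and as an $X$ (for the other two), and its row would contain one $XX$, one $O$ and one $X$ rather than one $XX$ and two $O$'s, violating Definition~\ref{grid diagram}. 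The fix is exactly what the orientation condition in Definition~\ref{special oriented trivalent graph} provides: after contracting a thick edge the two incoming thin edges can be routed in one preferred direction and the two outgoing ones in the other, which is what the paper's local modification (adding one column and one $O$ near each thick edge, Figure~\ref{fig:new-grid-construction}) accomplishes. With that correction your argument goes through.
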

	
	The proposition will be proved by direct construction.
	
	In what follows, we fix identifications of $V_\alpha$ and $V_\beta$  with $S^1\times D^2$, so that we can talk about meridian disks without ambiguity. We temporarily focus on \text{Definition \ref{singular link1}} of singular links. 
	
	\begin{defi}
		A singular link $L$ in a lens space $L(p,q)$ is called \emph{in grid position} if
		\begin{enumerate}
			\item All vertices lie on $\Sigma$;
			
			\item Each thin edge or $S^1$ component of $L$ meets  $\Sigma$ transversely in $n\ge 1$ times in its interior so that it is composed of finitely many arcs with ends on $\Sigma$.  Each arc is required to be properly embedded in a meridian disk $\{pt\} \times D^2$ of $V_\beta$ or $V_\alpha$. As we travel along the edge, arcs in $V_\alpha$ and $V_\beta$ appear alternately.
			
			\item The interior of each thick edge lies entirely in a meridian disk of $V_\alpha$ or $V_\beta$.
			
			\item When two thin arcs meet at a vertex, we require them to share a meridian disk which contains no other arcs. Also, if a thick edge has its interior in $V_\alpha$ (resp. $V_\beta$), then the four relevant thin arcs all lie in $V_\beta$(resp. $V_\alpha$).
			
			\item Besides the case in (4), no two arcs share a meridian disk.
			
			\item Each thick edge is short enough so that the two meridian disks containing arcs from its two vertices are adjacent in $V_\beta$ or $V_\alpha$. Here adjacent means that if the disk containing some arcs from $L$ in $V_\gamma$ ($\gamma=\alpha,\beta$) are $\{a_j\}\times D^2$, for $0\le a_1< \ldots< a_k<1$ (parametrize $S^1$ by $[0,1)$), then the two disks are $\{a_i\}\times D^2$ and $\{a_{i+1}\}\times D^2$ for some $i$ or $\{a_1\}\times D^2$ and $\{a_{k}\}\times D^2$.
			
		\end{enumerate}	
		
	\end{defi}
	
	After an isotopy, it is clear that any link in $L(p,q)$ can be put into grid position. So we assume $L$ is already in such a position. Cut $\Sigma$ open along a pair of generic $\lambda$ and $\mu$ curves (generic in the sense that they meet the projection of $L$ to $\Sigma$ transversely). Here. we cut $\Sigma$ open for the convenience of drawing a planar diagram, but readers should keep in mind that the projection of $L$ lives naturally on the Heeagaard torus.
	
	Then we rule the planar diagram with slanted $\beta$ curves and horizontal $\alpha$ curves so that each row contains exactly one meridian disk of $V_\alpha$  which has one or two arcs from $L$ and each column contains exactly one meridian disk of $V_\beta$  which has one or two arcs from $L$. The case of two only happens near a vertex.

	Afterwards, we put base points on the cut-open $\Sigma$. We put an $O$ marks in a cell if there is an arc in $V_\alpha$ leaving it or an arc in $V_\beta$ pointing to it. Similarly, a $X$ mark is put in a cell if there is an arc in $V_\beta$ leaving it or an arc in $V_\alpha$ pointing to it. 
	
	Now, in each row or column, we have two or three marks and at least one of each kind. For the shadow of each thick edge, we would have some consequent columns or rows with three marks and pair of $O$, $X$ in adjacent cells. Add a thick arc connecting this pair of marked points. See (a) in \text{Figure \ref{fig:new-grid-construction}} for example. 
	
	To change this into a desired picture, we introduce $XX$ base points. See \text{Figure \ref{fig:new-grid-construction}}, in which we show how to modify the picture at hand locally into a grid diagram.
	
	\begin{figure}
		\centering
		
		\begin{overpic}[width=0.8\textwidth]{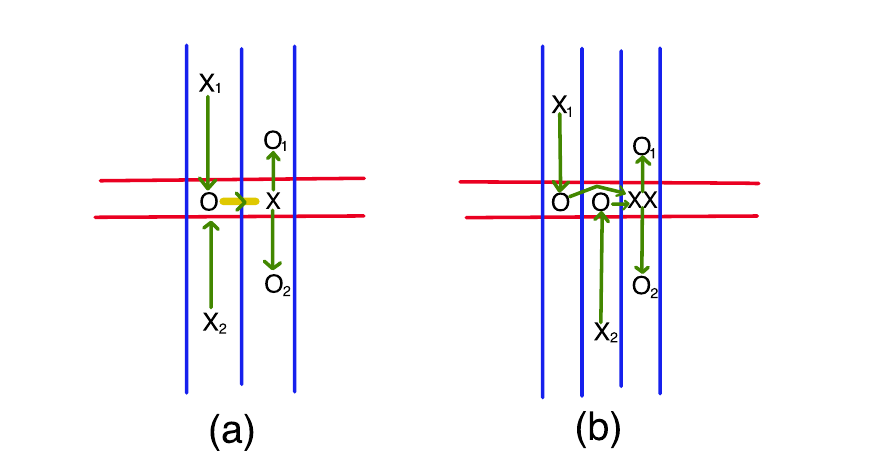}
			
		\end{overpic}
		\caption{\textbf{Making a grid position projection into a grid diagram.} In (a), we show how the projection of a link in grid position looks in a neighborhood of thick edge. Now add one column and one $O$ mark to the diagram and dispense $X_1$ and $X_2$ to the column of two $O$ base points in an arbitrary way. Then, replace the $X$ with an $XX$, we get to (b) which is the desired way of exhibiting a thick edge in grid diagram}
		\label{fig:new-grid-construction}
	\end{figure}
	
	In \text{Figure \ref{fig:ex-of-grid-diagram}}, we give a detailed example illustrating the procedure from a singular link in grid position to a grid diagram using a singular knot with one thick edge in $RP^3$. 
	
	\begin{figure}
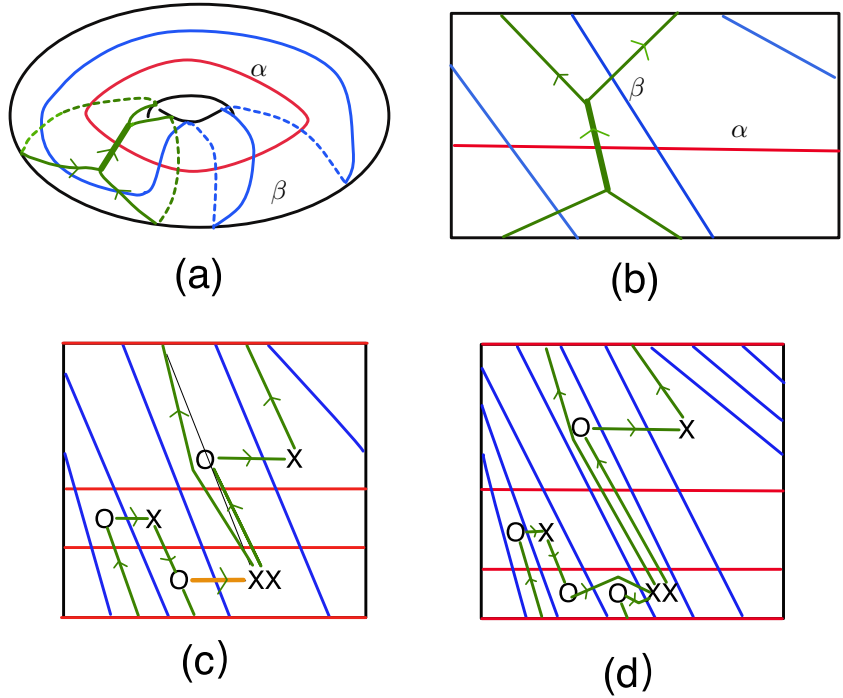

		\centering
		\begin{overpic}[width=0.9\textwidth]{exofgriddiagram.pdf}
			\put(33,62) {$\alpha$}
			\put(35,50) {$\beta$}
			
			\put(80,56) {$\alpha$}
			\put(70,60) {$\beta$}
			
		\end{overpic}
		
		\caption{\textbf{An example of a grid diagram} (a) a torus projection of a singular knot with one thick edge in $RP^3$; (b) the planar diagram of the same knot after cutting the torus open along a pair of meridian and longitude; (c) a new projection after putting the knot into grid position; we add new $\alpha$ and $\beta$ curves according to how many meridian disks the knot intersects; (d) modify (c) using the procedure in \text{Figure \ref{fig:new-grid-construction}} to get a grid diagram. }
		\label{fig:ex-of-grid-diagram}
	\end{figure}

	\subsection{Moves on torus diagrams}\label{Moves on torus diagram}
	
	To define an invariant of singular links based on those grid diagrams constructed above. We need to know how different grid diagrams of the same isotopic class of links are related. Since our grid diagram is constructed based on a projection to the Heegaard torus, we first show an analogue of the classical Reidemeister theorem (for planar diagrams of knots in $S^3$). Here, our proof modifies the one of the usual Reidemeister theorem in \cite[Appendix]{Kawauchi1996ASO}. 
	
	Since a singular link is defined as an embedding of a certain graph, it is naturally an object in the piecewise linear category. So, we will live in this category throughout this section and always endow $L$ with a simplicial structure, which is a subdivision of the natural one on a graph. When using \text{Definition \ref{singular link2}}, a singular link $L$ will mean the image of $G_0$, although those standard disks are important for us to consider what kinds of isotopy are allowed.
	
	We first introduce some concepts that are necessary for describing a Reidemeister theorem.
	
	\begin{defi}\label{regular projection}
		Recall that we are considering the projection of a singular link $L$ onto Heegaard torus $T^2$ induced by the flow of a generic Morse function which is compatible with the standard Heegaard splitting of lens spaces. By perturbing it a little, we may assume the projection map is piecewise linear. Such a projection is called \emph{regular} if \begin{itemize}
			\item The set of multiple points in the image $p(L)$ consists of  finitely many double points. 
			\item No points in the preimage $p^{-1}(c)\cap L$ of any double point $c\in p(L)$ are vertices of $L$.
		\end{itemize} 
	\end{defi}
	\begin{defi}\label{disk move}
		For a link $L$ and an embedded disk $D$ in $L(p,q)$ satisfying $L\cap D=L\cap\partial D$, which is an arc, we can form a new link $L'=cl(L-L\cap D)\cup (\partial D-L\cap D)$. In this case, we say $L'$ is obtained from $L$ by performing a \emph{disk move}. It is clear that such two links are equivalent.
		
		By subdividing $L$ and $D$, we can assume the arc $\partial D\cap L $ is a union of 1-simplices while $D$ is a union of 2-simplices. Thus, disk moves can be done in the piecewise linear category.
		
	\end{defi}

	The set of Reidermeister moves for torus diagrams of singular links in lens spaces is shown in \text{Figure \ref{fig:rmoves-new}} following those defined for spatial graphs in \cite{MR946218}. 
	\begin{figure}
		\centering
		\includegraphics[width=0.85\linewidth]{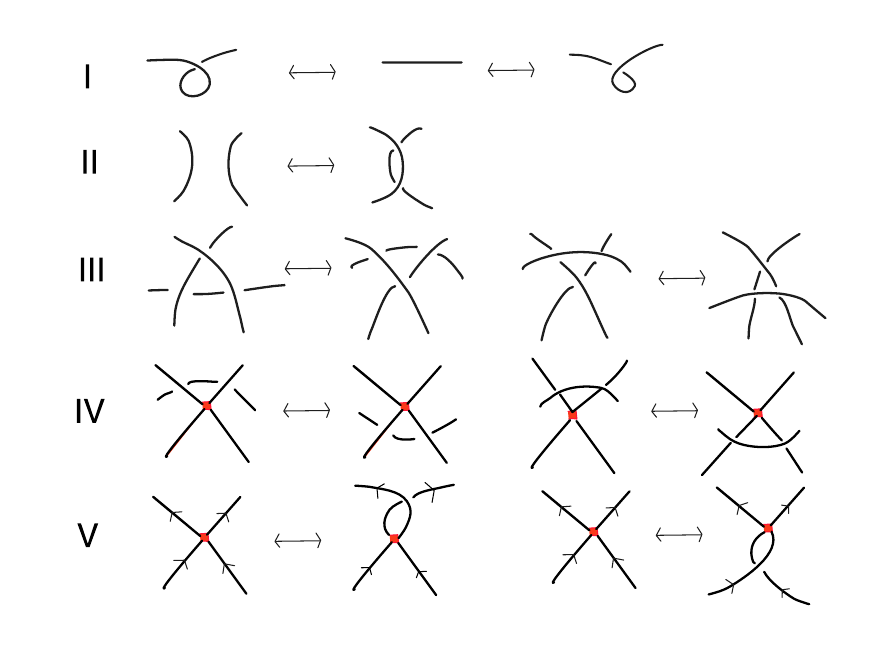}
		\caption{\textbf{Singular Reidemeister moves I-V}}
		\label{fig:rmoves-new}
	\end{figure}
	The first three moves are the usual ones, while IV and V account for newly introduced vertices. Due to definition difference compared with Kauffman's setup, here Reidemeister move V can only happen between edges with the same orientation at a vertex.
	
	\begin{defi} \label{disk slide}
		Let $D$ be a generic projection of some links $L$ in $L(p,q)$ to the Heegaard diagram $T^2$ via the flow of a Morse function compatible with the Heegaard splitting. Equip the torus with two generic representatives $\alpha_0$ and $\beta_0$ of attaching curves of 1-handle and 2-handle, respectively. We can construct a new torus diagram $D'$ for $L$ by replacing an arc in $D$ that contains only over crossings (resp. under crossings) with its complement in a parallel copy of $\beta_0$(resp.$\alpha_0$). In this case, we say $L'$ differ from $L$ by a \emph{$\beta$-disk slide} (\emph{$\alpha$-disk slide}). See Figure \ref{fig:disk slide} for an illustration.
	\end{defi}
	\begin{figure}
		\centering
		\includegraphics[width=0.85\linewidth]{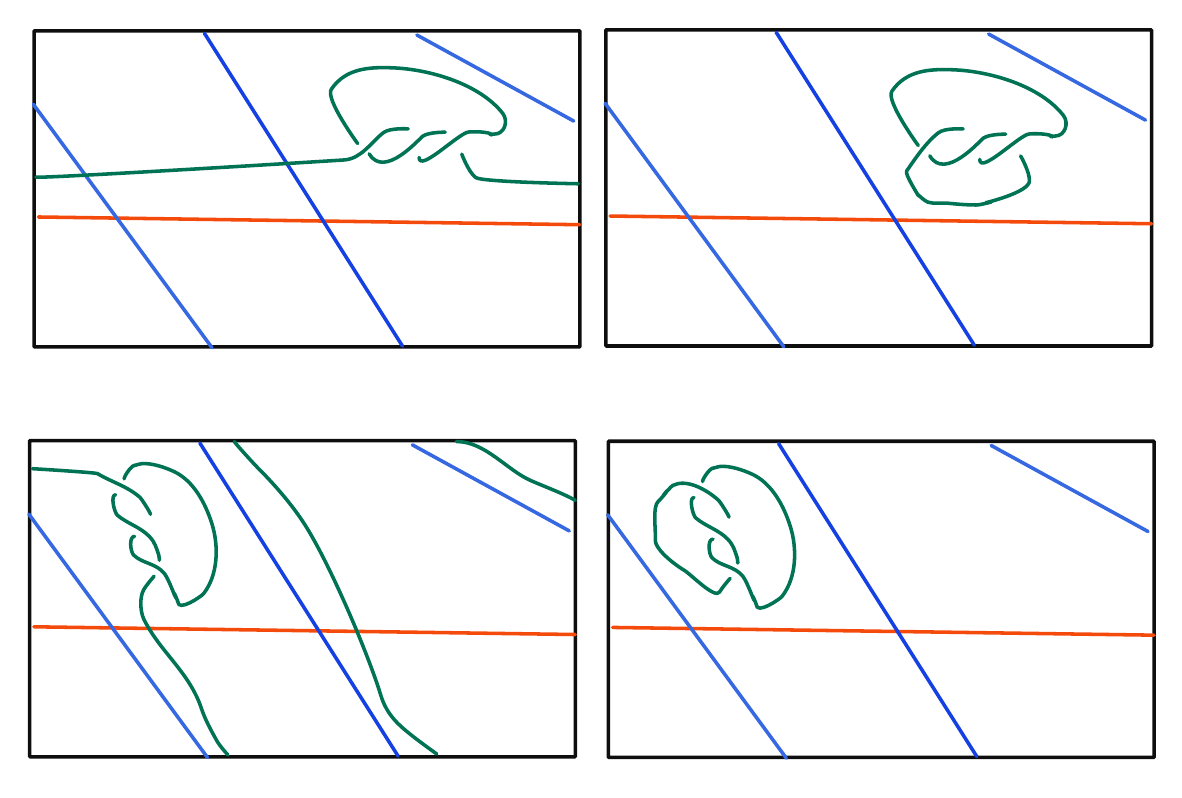}
		\caption{\textbf{Disk slide moves} In the first row, we show an example of an $\alpha$-disk slide; in the second row, we show an example of a $\beta$-disk slide.}
		\label{fig:disk slide}
	\end{figure}
	
	\begin{thm}\label{singular Reidemeister theorem}
		Consider $L$ and $L'$ to be singular links in $L(p,q)$ with the same underlying graph, the following are equivalent:
		\begin{enumerate}
			\item $L$ and $L'$ are equivalent;
			\item $L$ and $L'$ differ by a finite sequence of disk moves;
			\item regular projections of $L$ and $L'$ to the Heegaard torus differ by a finite sequence of singular Reidemeister moves and disk slides. 
		\end{enumerate}
		
	\end{thm}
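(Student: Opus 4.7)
The plan is to adapt the proof of the classical Reidemeister theorem given in the appendix of \cite{Kawauchi1996ASO} to the PL category for embeddings of special trivalent graphs in $L(p,q)$. I establish the three implications $(1)\Rightarrow(2)\Rightarrow(3)\Rightarrow(1)$ in turn, exploiting the fact that everything is carried out simplicially.

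For $(1)\Rightarrow(2)$, I fix simplicial structures on $L$, $L'$ and $L(p,q)$ for which both embeddings are simplicial, and recall the standard PL fact that two PL embeddings of a graph which are isotopic through PL homeomorphisms of the ambient manifold differ by a finite sequence of elementary $\Delta$-moves supported on 2-simplices meeting $L$ only along one edge. Each such $\Delta$-move is by definition a disk move in the sense of Definition \ref{disk move}; the only extra care needed is near a graph vertex, where I use the standard disk from Definition \ref{singular link2} to keep the cyclic order of the four incident arcs fixed, so that the resulting $L'$ still has the same underlying special trivalent graph. The converse direction $(2)\Rightarrow(1)$ is immediate since a disk move is a PL ambient isotopy supported in a bicollar of $D$.

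For $(2)\Rightarrow(3)$, fix a disk move $L\rightsquigarrow L'$ across a disk $D$. After a small perturbation that keeps $\partial D\cap L$ fixed and the projection $p$ regular before and after the move, I subdivide $D$ into 2-simplices small enough that the image $p(D)$ on the Heegaard torus is a union of singular triangles each containing at most one crossing, at most one graph vertex, and at most one image of a $p$-critical point, and such that no two of these features coincide. Because each elementary subdivided disk move projects to a local modification of the torus diagram of one of the types I-V, pushing the move through one 2-simplex at a time expresses the whole disk move as a finite sequence of singular Reidemeister moves. Reidemeister IV accounts for strands sliding past a vertex and Reidemeister V accounts for two thin edges sharing a vertex being exchanged, which is the only local move compatible with the orientation and the standard disk data.

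For $(3)\Rightarrow(2)$, each of the moves I--V is realized by an explicit disk move: for I a small triangular disk at the kink, for II a bigon between the two strands filled in by a disk, for III a triangle bounded by the three overlapping strands, for IV a triangle one of whose edges is a short arc near the vertex, and for V a triangle with two of its edges being the two thin edges being exchanged at the vertex. In each case one checks that the abstract disk embeds into $L(p,q)$ (lifting it to $V_\alpha\cup V_\beta$ using the over/under information in the diagram).

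The main obstacle I expect is the bookkeeping in $(2)\Rightarrow(3)$ when the disk $D$ passes near a graph vertex: here the standard disk from Definition \ref{singular link2} constrains which isotopies are allowed, so I must check that after subdivision every elementary move preserves the cyclic order of the four incident strands, which is precisely what forces Reidemeister V to occur only between edges of matching orientation at the vertex. Once the vertex case is handled, the rest of the argument is essentially the same as in the planar case, with the toroidal background contributing only at the global level (and already absorbed by the fact that a disk $D$ in $L(p,q)$ is arbitrary, not required to project injectively).
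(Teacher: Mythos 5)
Your overall architecture matches the paper's: the easy direction from Reidemeister moves back to equivalence, the triangle-subdivision argument for passing from disk moves to Reidemeister moves (with the crossing-free cases delegated to the classical references and the new vertex cases handled by moves IV and V, including the observation that move V is constrained by orientation at the vertex), and the reduction of equivalence to disk moves. The $(2)\Rightarrow(3)$ sketch is essentially the paper's proof.

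The genuine gap is in $(1)\Rightarrow(2)$. You dispose of it by invoking ``the standard PL fact that two PL embeddings of a graph which are isotopic through PL homeomorphisms of the ambient manifold differ by a finite sequence of elementary $\Delta$-moves.'' That fact is standard only for $\RR^3$ and $S^3$ (this is exactly the content of the appendix of \cite{Kawauchi1996ASO} you cite, and its proof there uses a shrinking/coning argument specific to that setting); for an arbitrary ambient $3$-manifold it is precisely the statement to be proved, and citing it is close to circular. The paper's proof of this implication is where all the real work happens: one first uses the mapping class group of $L(p,q)$ to normalize the ambient homeomorphism $h$ so that it preserves the Heegaard torus and both handlebodies, then isotopes $L$ (hence $L'$) into the interior of $V_\beta$, and finally runs a shrinking argument --- parametrizing $V_\beta$ as $S^1\times D^2$, radially contracting $L$ toward a small core solid torus $V_{\epsilon'}$ fixed pointwise by $h$, and using compactness of the time interval to break the contraction into finitely many disk moves. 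This produces a link $L^*\subset V_{\epsilon'}$ with $h(L^*)=L^*$ that is disk-move equivalent to $L$, and applying $h$ transports the same chain of disk moves to $L'$. Your proposal contains none of this; to be complete you would either have to reproduce such an argument or locate a reference proving the $\Delta$-move discretization for embedded graphs in general $3$-manifolds, which the standard knot-theory sources do not supply.
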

	\begin{proof}
		(3) implies (1) is clear. 
		
		We now show (1) implies (2). Take $h$ to be the piecewise linear homeomorphism in the definition of equivalence. Note that any automorphism of $L(p,q)$ that is isotopic to the identity fixes the Heegaard torus $T$ and two handlebodies up to isotopy. See \cite{MR710104} for detailed knowledge on the mapping class group of lens spaces. So without loss of generality, we assume that $h$ sends $T^2$, $ V_\beta$, $V_\alpha$ to themselves, respectively. Observe that $L$ intersects $V_\alpha$ in finitely many arcs, so by pushing $T$ a little bit down into $V_\alpha$ using its collar neighborhood in $V_\alpha$, we may assume that $L$ lies entirely in $\mathrm{int}(V_\beta)$. Since $h(L)=L'$ and $h(V_\beta)=V_\beta$, $L'$ also lies in $\mathrm{int}(V_\beta)$.
		
		Let $V_\beta$ be parametrized as $S^1\times D^2$ in the usual way. Let $C=S^1\times\{0\}$ be the core curve, $D_\epsilon$ be the subdisk of $D^2$ with radius $\epsilon$, and $V_{\epsilon}=S^1\times D_{\epsilon}\subset V_\beta$. Further denote by $S=\partial V_\beta\cong T^2$.
		
		We can further isotope $L$, $L'$, and $h$ so that \[L\cap C=\emptyset=L'\cap C,\] and $h$ fixes some $V_{\epsilon'}$ disjoint from $L$ and $L'$ for $\epsilon'>0$ sufficiently small.
		
		Before starting the proof, we make an important claim: If $V_{\epsilon'}$ is a solid torus disjoint from $L$, then there exists a singular link $L^*$ in $V_{\epsilon'}$ which can be connected to $L$ within a finite sequence of disk moves.
		
		Choose $\epsilon_0$ and $\epsilon$ with \[0<\epsilon_0<\epsilon'<\epsilon<1\] such that \[V_{\epsilon_0}\subset \mathrm{int}(V_{\epsilon'})\subset L\cup V_{\epsilon'} \subset int (V_{\epsilon}) .\] Then, we have an obvious homeomorphism \[d:V_{\epsilon}-\mathrm{int}(V_{\epsilon_0})\to S\times[1,3],\] with $d(V_{\epsilon'}-\mathrm{int}(V_{\epsilon_0}))=S\times [1,2]$.
		
		Denote by $p_S:S\times[1,3] \to S$ and $p_I:S\times[1,3] \to [1,3]$ the projections. Further introduce the compositions $e_S=p_S\circ d:V_{\epsilon}-\mathrm{int}(V_{\epsilon_0}) \to S$, $e_I=p_I\circ d:V_{\epsilon}-\mathrm{int}(V_{\epsilon_0}) \to [1,3]$.
		
		By further composing with an automorphism $f$ of $S\times[1,3]$ satisfying $f(S\times[1,2])=S\times[1,2]$ if necessary, we may assume $e_S|_L$ is an immersion. Observe that $e_I(L)\subset (2,3)$, so we have a well-defined map $F:L\times [0,1]\to S\times [1,3]$ defined by \[F(x,t)=(e_S(x),e_I(x)-t).\]
		This $F$ is not piecewise linear, but it sends piecewise linear subspaces to piecewise linear subspaces. In particular, the restriction to each $L\times\{t\}$ is piecewise linear.
		
		For each $t\in [0,1]$, there is a closed neighborhood $N(t)\subset [0,1]$ such that $F$ is an embedding when restrict to $L\times N(t)$, thus $F(L\times N(t))$ is a piecewise linear submanifold of $S\times[1,3]$. Using this and the compactness of $[0,1]$, we get a finite sequence of ``time levels'' $0=t_0<t_1\ldots <t_n=1$ such that each $F(L\times\{ t_i\})$ can be connected to $F(L\times \{t_{i+1}\})$ using finitely many disk moves. By taking $L^*=F(L\times \{1\})$, we have shown our claim.
		
		Now we go back and pick up our homeomorphism $h$. We have assumed that $h$ fix some $V_{\epsilon'}$  disjoint from $L$ and $L'$,and we can take $L^*\subset V_{\epsilon'}$ obtained from the claim above.  Now $L^*$ can be related to $L$ by a finite sequence of disk moves. Applying $h$, we see that $h(L^*)=L^*$  can be connected to  $h(L)=L'$ by a finite sequence of disk moves. Consequently, $L$ and $L'$ can be connected by a finite sequence of disk moves.

		Finally, we show that (2) implies (3). It suffices to consider a pair of links $L$ and $L'$ that can be related by a single disk move. By subdividing the disks into triangles and refining the triangulation of $T^2$ and $L$, we only need to consider small triangles for which exactly one of the following is true. \begin{itemize}
			\item The core curve of $V_\alpha$ or $V_\beta$ passes through the disk once.
			\item It contains at most one crossing in its interior.
			\item There is at most one graphical vertex (an original vertex in the transverse graph definition or equivalently a thick edge in the first definition) on its boundary or in its interior.
		\end{itemize} 
		The first case corresponds to a $\beta$ (resp. an $\alpha$)-disk slide if the core curve belongs to $V_\beta$ (resp. $V_\alpha$).
		
		The second case has been dealt with in \cite{MR1414898}. We only need to deal with the third one:
		\begin{itemize}
			\item See \text{Figure \ref{fig:disk-move1}} for cases in which there is exactly one graphical vertex in the interior of the triangle. Our convention is that disk moves happen between the orange edges and the blue edge that together form the ``disk''-triangle. These can be realized by a single Reidemeister move IV or a composition of one or two Reidemeister move II together with a Reidemeister move IV. See \text{Figure \ref{fig:disk-move-detailed}} for an illustration.
			
			\begin{figure}
				\centering
				\includegraphics[width=0.9\linewidth]{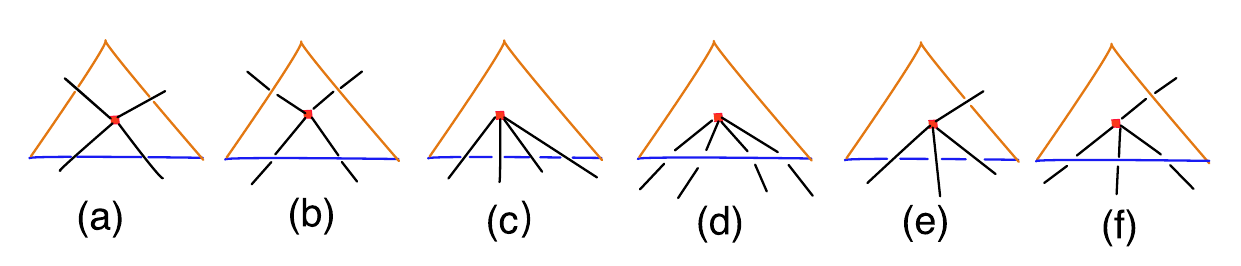}
				\caption{\textbf{Disk moves with a vertex in the disk interior.} In (a)-(f), we list all possible cases of a triangle of disk moves containing exactly one vertex in its interior. In each case, the disk move transforms between the blue edge and the orange broken line. (a) and (b) can be realized by a Reidemeister move IV directly. In \text{Figure \ref{fig:disk-move-detailed}}, we show how to realize (c) and (e) by Reidemeister moves. (d) and (f) can be realized similarly.}
				\label{fig:disk-move1}
			\end{figure}
			\begin{figure}
				\centering
				\includegraphics[width=0.8\linewidth]{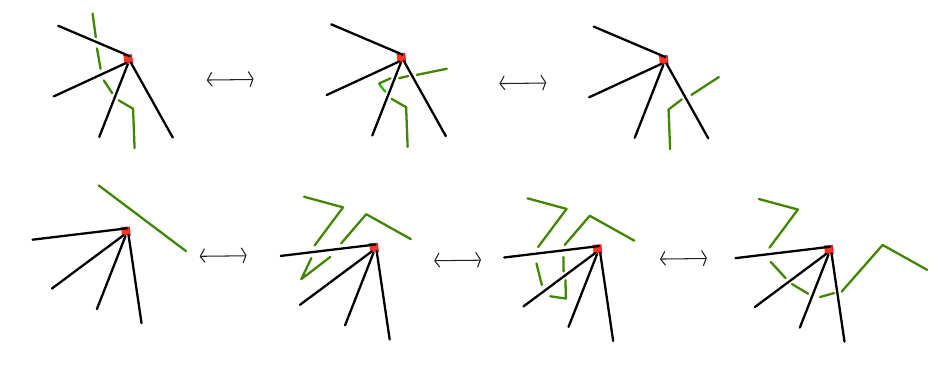}
				\caption{\textbf{Realize disk moves by Reidemeister moves.} In the first row, we show how to realize \text{Figure \ref{fig:disk-move1}}(e) by Reidemeister moves II and IV. In the second row, we show how to realize \text{Figure \ref{fig:disk-move1}}(c) by Reidemeister moves II and IV. We omit orientations in this list, since orientations are irrelevant for these moves.}
				\label{fig:disk-move-detailed}
			\end{figure}

			\item For the case of a graphical vertex appearing as a vertex of the triangle, see \text{Figure \ref{fig:disk-move2}}. Each of these is just a singular Reidemeister move V.
		\end{itemize}
		
		\begin{figure}
			\centering
			\includegraphics[width=0.75\linewidth]{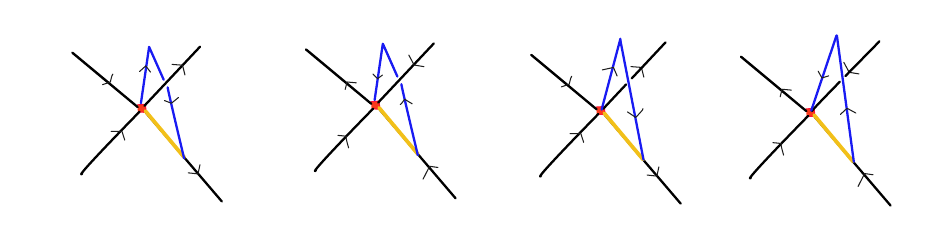}
			\caption{\textbf{Disk moves with a graphical vertex as a vertex of the triangle.} We list here all possible cases for disk moves to have a graphical vertex as a vertex of the triangle (disk). Each of these can be realized by a Reidemeister move V.}
			\label{fig:disk-move2}
		\end{figure}

		After dealing with all these cases, we know (2) implies (3), so the three items are equivalent.
		
	\end{proof}
	\begin{rmk}
		\begin{enumerate}
			\item The $\alpha$(resp. $\beta$)-disk slides can be realized as changing the way we connecting $O$ to $X$ (resp. $X$ to $O$) in the horizontal (resp. vertical) annulus. Therefore, these moves are ``invisible'' on a grid diagram if only consider the information we listed in Definition \ref{grid diagram}, so we will only consider Reidemeister moves in next subsection.
			\item The proof of Theorem \ref{singular Reidemeister theorem} follows the trend of the proof of classical Reidemeister Theorem in \cite[Appendix]{Kawauchi1996ASO}, in which they considered $S^3=B^3\cup_{S^2} B^3$ which is replaced by $L(p,q)=V_{\alpha}\cup_{T^2} V_{\beta}$ in our case. The extra disk slides account for the handles since now the genus is no longer zero. One can see from the proof that it holds for knots in more general three manifolds which has a standard Heegaard splitting and a simple enough mapping class group.  
		\end{enumerate}
	\end{rmk}

	\subsection{Moves on grid diagrams}\label{Moves on grid diagram}
	
	Now we are ready to introduce grid moves and show that they are enough for realizing all Reidemeister moves defined above. Once this is done, we can introduce invariants of singular links by considering invariants of grid diagrams that are kept unchanged under these moves. 
	
	For classical grid moves, there is a detailed description in \cite[Chapter 3]{MR3381987}. Here we almost follow \cite{MR3677933} except that our background grid diagrams are no longer formed by squares, and we are using a different labeling convention: our $XX$ corresponds to their nonstandard $O$, our $X$ corresponds to their regular $O$, while our $O$ corresponds to their $X$. 
	
	To make the definitions below clear, we recall the convention stated in \text{Definition \ref{grid diagram}}: a column (resp. row) is an annulus cut out by adjacent $\beta$ (resp. $\alpha$) curves. We introduce three kinds of \emph{grid moves}, naming them after the classical case:
	\begin{itemize}
		\item Cyclic permutation: Permuting the columns or rows cyclically. This is the same as the classical case. 
		
		\item Stabilization and destabilization: Assume $g$ is a grid diagram of size $n$. We stabilize $g$ to obtain a diagram of size $n+1$. Choose any marked cell of $g$. Erase the mark in this cell as well as marks in the same row or column. Split the empty row and column into two rows and two columns. If the original mark is $X$ or $O$, then there are exactly four ways to fill in the new rows and columns to get a grid diagram (See \cite{MR3381987}, in which they described them in terms of directions: South-East, South-West, North-East, or North-West). If the original mark is $XX$, then we must fill in the new rows and columns with one $XX$, one $X$ and six $O$s. There are sixteen ways to do this to fit the new diagram into \text{Definition \ref{grid diagram}} without changing the link that it represents. Destabilization is defined as the inverse of a stabilization.
		
		\item Commutation: we describe using columns; the same applies to rows. A pair of adjacent columns can be exchanged if the following conditions are satisfied: there are slanted line segments $LS_1$ and $LS_2$ on the torus such that 
		\begin{enumerate}
			\item $LS_1\cup LS_2$ contains all the $O$, $X$, and $XX$ base points in these two columns;
			\item the projection of $LS_1\cup LS_2$ to a single curve $\beta_i$ is $\beta_i$;
			\item the projection of $\partial LS_1\cup \partial LS_2$ consists of exactly two points.
		\end{enumerate}
		
	\end{itemize}
	
	In \text{Figure \ref{fig:grid-moves-new}}, we show examples of grid moves. 
	
	One can see from these examples that grid moves are just reformulations of the Reidemeister moves we considered in the previous section; in particular, they keep the equivalence class of the link unchanged.
	
	\begin{figure}
		\centering
		\includegraphics[width=0.95\linewidth]{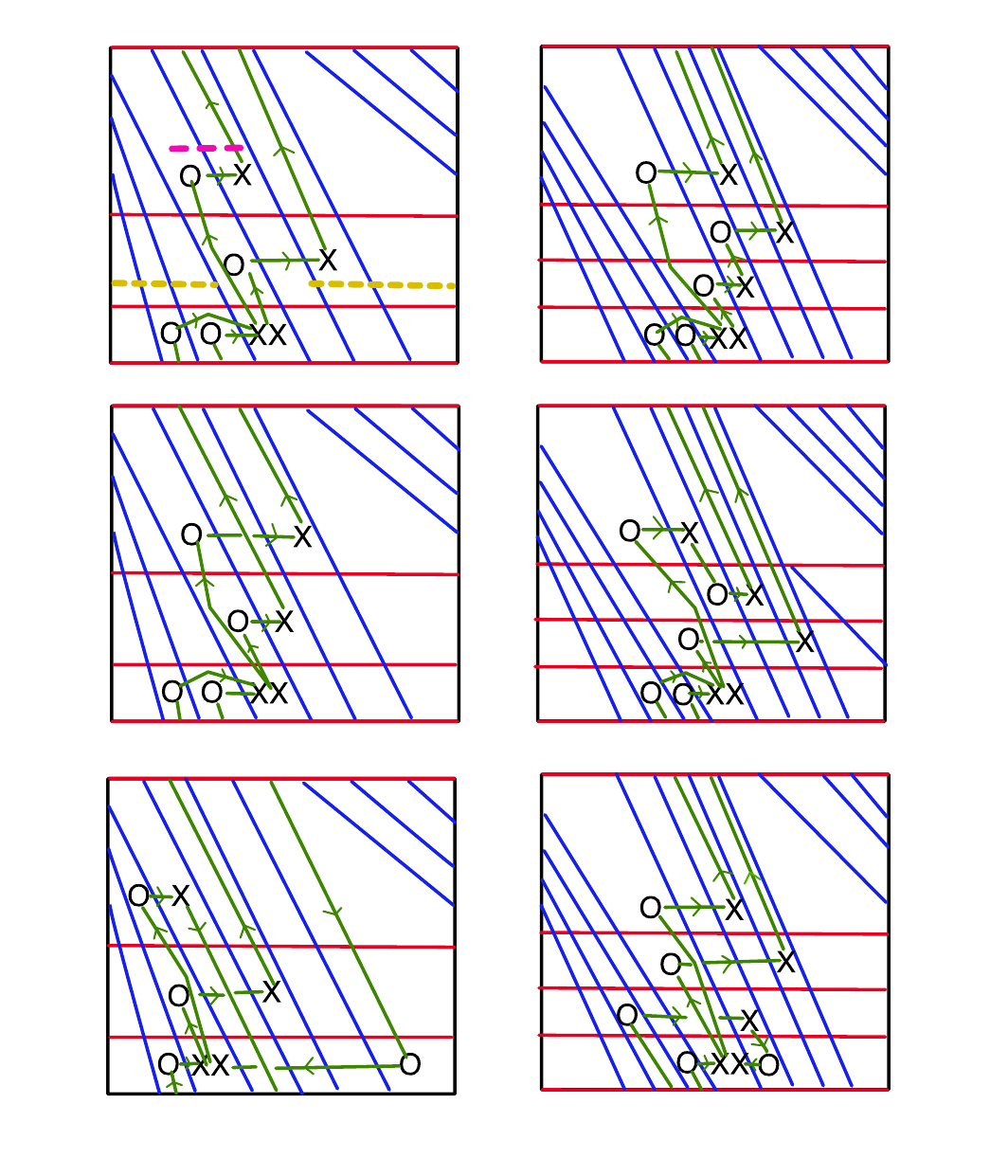}
		\caption{\textbf{Example of grid moves.} The top left picture is an alternative grid diagram for the singular knot we have seen in \text{Figure \ref{fig:ex-of-grid-diagram}}. We shall apply various grid moves to this diagram. The pink and yellow segments are $LS_1$ and $LS_2$, respectively, showing that these two columns are available to be exchanged. In the middle left picture, we show the grid diagram after commutation. The lower left picture is the result of performing a cyclic permutation; the top right picture results from a stabilization $O:NW$; the middle right picture results from a stabilization $X:NE$; the lower right picture results from a stabilization $XX:NW$.}
		\label{fig:grid-moves-new}
	\end{figure}
	
	\begin{rmk}
		There are extra moves introduced in \cite{MR3381987}, the so-called switch and cross commutation. In our definition, switches are contained in the set of commutations. As mentioned there, cross commutation leads to crossing changes on planar knot diagrams, so it does not preserve the isotopy type in general.
	\end{rmk}
	
	Before proving the completeness of these moves, we introduce the concepts of L-formation and preferred diagram mimicking \cite{MR3677933}.
	
	\begin{defi}
		For an $XX$ mark, we call the set of $O$'s sharing the same row or column its \emph{flock}. We say the flock of an $XX$ is \emph{in L-formation} if it is immediately to the right or above that $XX$. A preferred grid diagram is a grid diagram in which all $XX$'s have their flocks in L-formation. 
		
	\end{defi}
	An example of such a diagram is shown in \text{Figure \ref{fig:l-formation}}. This is again a grid diagram of the knot we have considered in \text{Figure \ref{fig:ex-of-grid-diagram}}.
	\begin{figure}
		\centering
		\includegraphics[width=0.4\linewidth]{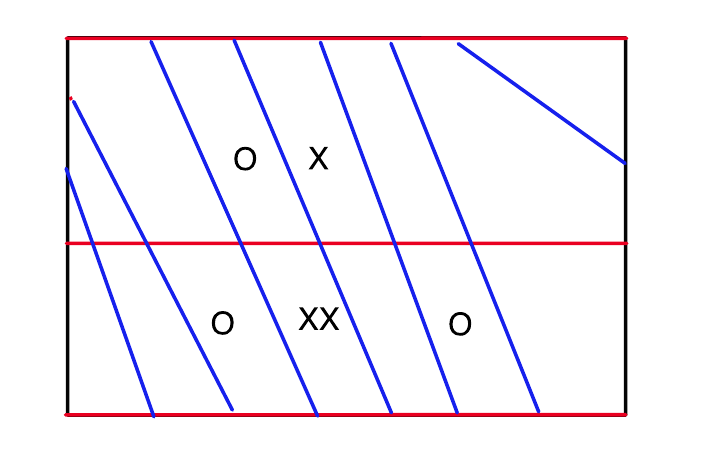}
		\caption{\textbf{Example of a grid diagram in L-formation.}}
		\label{fig:l-formation}
	\end{figure}
	
	The proof of the following in \cite{MR3677933} directly extends to our case:
	\begin{lem}
		Every grid diagram of a singular link can be made into a preferred grid diagram using a finite sequence of grid moves.
	\end{lem}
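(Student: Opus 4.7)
The plan is to induct on the number of XX base points whose flock is not already in L-formation, so it suffices to show that given a grid diagram $g$ with at least one XX not in L-formation, we can apply a finite sequence of grid moves that strictly decreases this count.

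Pick such an XX, sitting in a singular row $R$ and a singular column $C$. By cyclic permutation of rows and columns, I will first translate $g$ so that the XX lies in a convenient position and there are at least two rows above $C$ and at least two columns to the right of $R$ that are ``regular'' in the sense needed for the moves below. The row $R$ contains exactly two other O base points $O_1, O_2$, and the column $C$ contains exactly two other O base points $O_3, O_4$. I want to move $O_1, O_2$ to the two cells in $R$ immediately to the right of the XX, and $O_3, O_4$ to the two cells in $C$ immediately above the XX.

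The key lemma I will use (which is standard for lens space grid diagrams and was used in \cite{MR3677933} for $S^3$) is that a single O base point can be slid one step along its row by a column commutation, provided the neighboring column is arranged compatibly, i.e.\ there exist segments $LS_1,LS_2$ as in the definition of commutation. When such segments do not exist directly because a conflicting X or XX sits in the way, one first performs a stabilization adjacent to the obstruction to open up free space, executes the commutation in the enlarged diagram, and then destabilizes. Iterating, I can slide $O_1$ and $O_2$ along $R$ until they occupy the two cells immediately to the right of XX; similarly, using row commutations with the analogous stabilize–commute–destabilize trick, I can slide $O_3, O_4$ up column $C$ into the two cells immediately above the XX. Since each such sliding step is a composition of grid moves, the flock of the chosen XX ends up in L-formation.

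The main obstacle is to make sure that arranging one XX into L-formation does not destroy the L-formation of XX's that have already been arranged. I handle this by processing the XX's one at a time and, at each stage, carrying out all the sliding moves \emph{inside a neighborhood} that avoids the previously fixed XX's and their $2\times 2$ L-blocks. Concretely, after an XX is in L-formation, I treat its row, its column, and the two rows and two columns occupied by its flock as a ``protected region'' and only perform commutations and cyclic permutations whose $LS_1,LS_2$ segments lie outside this region; the stabilizations used to create breathing room are likewise placed far from protected regions, and destabilized back before moving on. The underlying reason this is always possible is that the grid is on a torus with arbitrarily many rows and columns available after stabilization, so there is always enough ``unused'' space to route the sliding moves around previously arranged L-blocks.

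Finally, since there are only finitely many XX marks to begin with, this procedure terminates, yielding a preferred diagram grid-move equivalent to $g$.
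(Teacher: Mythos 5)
Your overall strategy (process the XX marks one at a time, using stabilizations and commutations to drag the members of each flock into position) is the same as the second step of the argument the paper invokes from Harvey--O'Donnol. But you have omitted the first step of that argument, and it is not optional: before any flock can be put into L-formation, one must first use stabilizations to \emph{separate the flocks}, i.e.\ arrange that no O base point lies simultaneously in a singular row and a singular column. Your proof silently assumes that the two O's in the singular row $R$ and the two O's in the singular column $C$ are four distinct marks in general position. In a grid diagram this can fail: a single O may sit in a cell belonging both to $R$ and to $C$ (note that on a lens space grid a row and a column meet in $p$ cells, so this is even easier to arrange than in $S^3$), or it may lie in the singular row of one XX and the singular column of a different XX. In the first case the flock has only three members and no sequence of commutations can ever produce an L-formation, since commutations never change which row and column a given O occupies relative to the XX it shares them with. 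In the second case, sliding that O's column to sit next to $XX_1$ simultaneously drags the singular column of $XX_2$, so the two positioning requirements are coupled in a way your ``protected region'' bookkeeping does not address --- that device only protects XX's that are \emph{already} arranged, whereas the shared-O problem is an obstruction that exists before any arranging begins and can only be removed by stabilizing to split the offending incidences.

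A secondary imprecision: a commutation exchanges two entire adjacent columns (annuli), so it does not ``slide a single O one step along its row''; it transports every base point in that column at once. This is exactly why the interdependencies above matter, and why the separation step is what makes the paper's claim ``this can be done without violating the L-formation of other XX's'' true. With the separation step restored as a preliminary stage, the rest of your induction goes through along the lines of the cited proof.
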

	The proof of this lemma from \cite{MR3677933} has been done in two steps:\begin{itemize}
		\item By using stabilization, we separate the flocks so that no $O$ is shared by a singular row and a singular column. 
		\item By doing stabilization and commutation, we can put the flock of each $XX$ into L-formation. With the help of the first step, this can be done without violating the L-formation of other $XX$'s. Thus, we can achieve our goal in finite steps.
	\end{itemize}
	
	\begin{thm}\label{grid moves is complete}
		If two grid diagrams $g$ and $g'$ represent the same singular link $L$, then they can be connected by a finite sequence of grid moves.
	\end{thm}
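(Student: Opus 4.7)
The plan is to combine Theorem \ref{singular Reidemeister theorem} with the preceding lemma to reduce the grid-move problem to a question purely about torus projections. First I would apply the lemma to convert both $g$ and $g'$ by grid moves into preferred grid diagrams $g_p$ and $g_p'$, which still represent the same singular link $L$. Because every XX mark of a preferred diagram has its flock in L-formation, the reconstruction recipe of Definition \ref{grid diagram of a pair} applied to $g_p$ and $g_p'$ yields two genuine regular torus projections of $L$ in the sense of Definition \ref{regular projection}. Theorem \ref{singular Reidemeister theorem} then provides a finite sequence of singular Reidemeister moves I-V, together with planar isotopy on the Heegaard torus, taking one projection to the other.

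The bulk of the proof is to show that each individual Reidemeister move, plus any planar torus isotopy, can be lifted to a finite sequence of cyclic permutations, (de)stabilizations, and commutations on preferred grid diagrams. Planar isotopy of the torus projection is absorbed by cyclic permutations together with commutations, following Chapter 3 of \cite{MR3381987}. For the classical moves I-III on a non-singular part of the diagram, the realization is the same as in the regular lens space case treated in \cite{MR2429242}: Reidemeister I becomes an O- or X-type (de)stabilization, Reidemeister II becomes a commutation after two stabilizations, and Reidemeister III is reduced to a sequence of commutations after sufficient stabilizations near the triple crossing. For the new moves IV and V I would mimic the argument of \cite{MR3677933} for spatial graphs in $S^3$: since the flock of each XX is concentrated in an L-shape, the rows and columns hosting the XX and its flock can be locally enlarged by XX- and O-stabilizations and then slid past an incident strand by a commutation, which is exactly the effect of Reidemeister IV; Reidemeister V is realized by cyclically permuting the two O's of the flock that share the singular row (or column) with XX, after first stabilizing so that the co-oriented pair of strands at the vertex can be isolated in a pair of adjacent columns.

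The main obstacle I expect is the Reidemeister IV move, because one must exhaust many sub-cases according to (i) whether the crossing strand passes over or under the vertex, (ii) which of the four incident edges is crossed, and (iii) the orientations of the edges at the vertex. In each sub-case one has to check that the required stabilizations and commutations exist within a preferred diagram, i.e.\ that the line segments $LS_1$ and $LS_2$ demanded in condition (1)--(3) of the commutation definition can actually be drawn on a slanted column. The key observation that lets the $S^3$ argument carry over is that grid moves and L-formation are entirely local to a pair of adjacent bands, and the commutation definition already allows $LS_1, LS_2$ to be slanted, so the global slope $-p/q$ of the $\beta$ curves never obstructs a local move. Some care is needed because two parallel $\beta$ curves that look disjoint on one fundamental domain in fact wrap across the cut-open picture; here one resolves the ambiguity by cyclically permuting first, reducing to the planar situation treated in \cite{MR3677933}.

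Putting everything together, $g_p$ is connected to $g_p'$ by a finite sequence of grid moves, hence so are $g$ and $g'$, which proves the theorem.
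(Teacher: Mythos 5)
Your proposal follows essentially the same route as the paper: reduce to preferred (L-formation) diagrams via the preceding lemma, invoke Theorem \ref{singular Reidemeister theorem} to pass between torus projections, and then realize planar isotopy and Reidemeister moves I--V by grid moves, citing the classical case for I--III and the spatial-graph argument of \cite{MR3677933} for IV--V, with the same caveat that the only new issue is the parallelogram (rather than square) shape of the lens-space grid. This matches the paper's proof, which likewise delegates the case analysis to \cite{MR3381987} and \cite{MR3677933}.
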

	The main idea of this proof is to verify that any planar isotopy and all Reidemeister moves can be realized by a finite sequence of grid moves. With the help of the previous lemma, we only need to consider the case that both $g$ and $g'$ are preferred. We find that this verification has been done by previous works:
	\begin{itemize}
		\item For regular Reidemeister moves and isotopy of diagrams that do not involve any singular point, the proof has been done by \cite{CROMWELL199537} and \cite{article}. For a detailed version, one can refer to \cite[Appendix B]{MR3381987}.
		\item For the singular Reidemeister moves and isotopy that involve vertices, a detailed proof is given in \cite{MR3677933}. The only difference is that our diagram is not a square but a parallelogram. Actually, we only need part of their proof: Since all of our vertices are 4-valent, only two kinds of isotopies need to be taken care of: a vertex passing through an arc and two vertices passing each other. See \cite[Figure 26,27,28 and 30,31]{MR3677933}.
	\end{itemize}

	Before ending this section, we quote a lemma from \cite{MR3381987} that implies that it suffices to consider (de)stabilization and commutation invariance for the theory that we are going to define.
	\begin{lem}(\cite[Lemma 3.2.4]{MR3381987})
		A cyclic permutation can be realized by a finite sequence of commutation, stabilization, and destabilization.
	\end{lem}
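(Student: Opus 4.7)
The plan is to adapt the classical argument from Chapter 3 of \cite{MR3381987} to the present setting of grid diagrams for singular links in lens spaces. First I would reduce to proving that a single-position cyclic shift (of one column or one row) can be realized, since any cyclic permutation of $n$ columns or rows is a composition of such unit shifts.

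For a unit shift of a column $C$, the idea is the usual stabilize--commute--destabilize trick. Pick one of the $O$ marks of $C$ and perform a stabilization there, choosing the type of stabilization so that the newly created column is ``narrow,'' meaning its two marks both lie in the newly created row. Then iteratively commute this narrow column past each of its neighboring columns, moving it around the torus until it arrives on the opposite side of $C$. At each step one must verify the commutation hypothesis: the existence of slanted segments $LS_1, LS_2$ whose union contains all the marks of the two columns, whose projection to a single $\beta$ curve is the full curve, and whose boundary projects to exactly two points. Because the stabilization was designed to keep the two marks in the narrow column close together on a single new row, this condition is straightforward when commuting past a regular column. After sliding the narrow column to its destination, perform a destabilization; the composite effect of stabilization--commutations--destabilization is precisely the cyclic shift of $C$. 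The analogous argument applied to rows then gives every unit cyclic shift.

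The main obstacle is the presence of singular (XX) marks, which complicate the argument in two ways. First, a stabilization at a mark in a singular column has sixteen possible outputs rather than four, and one must choose correctly so that the narrow-column argument still applies; in particular, one stabilizes at an $O$ mark of the singular column rather than at the XX itself. Second, whenever the narrow column must be commuted past another singular column, the commutation hypothesis is more delicate because the singular column carries three marks instead of two, and the segments $LS_1, LS_2$ must accommodate all of them. Both issues can be resolved by a finite case analysis of the local pictures near an XX mark, and the extra flexibility provided by the sixteen stabilization types guarantees that a valid sequence of moves always exists. Once the single-column shift is established, the symmetric argument for rows completes the reduction of arbitrary cyclic permutations to commutations and (de)stabilizations.
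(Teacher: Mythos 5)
Your argument is correct and is essentially the proof the paper relies on: the paper simply quotes the corresponding lemma from \cite{MR3381987} and asserts that its proof --- precisely the stabilize--commute--destabilize trick you describe, creating a short column by stabilization, sliding it across the torus by commutations (with switches counted as commutations here), and destabilizing --- carries over without change. Your added remarks on stabilizing at an $O$ of a singular column and on commuting past three-marked columns address exactly the points where ``without change'' needs checking, so there is no substantive difference in approach.
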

	Although they proved this for grid diagrams of knots and links in $S^3$, their proof applies to our case without change.

	\section{Grid homology for singular links in lens spaces}\label{Grid homology for singular links in lens spaces}
	In this section, we always consider a grid diagram as follows unless otherwise stated. $g$ is a grid diagram of size $n+k$ representing a singular link $L$ in $L(p,q)$ with $m$ regular components and $n$ thick edges. That is, we have a diagram ruled by $n+k$ $\beta$ and $\alpha$ circles with $n$ $XX$ marks, $k$ $X$ marks, and $2n+k$ $O$ marks. By renumbering the $O$-marks, we assume that $O_1,\ldots, O_{2n+m}$ lie on distinct thin edges or $S^1$ components. Note that $n$ and $m$ are intrinsic to the underlying graph while $k$ depends on the embedding and the specified grid diagram. 
	
	For convenience, we introduce some new notations:\begin{itemize}
		\item $\OV$: the set of all $O$ marks;
		\item $\XXV\XXV$: the set of all $XX$ marks;
		\item $\XXV_r $: the set of all $X$ marks;
		\item $\XXV=\XXV\XXV\cup \XXV_r$.
	\end{itemize} 
	
	\subsection{Construction of chain complexes}\label{Construction of chain complex}
	
	Form the symmetric product $Sym^{n+k}(T^2)$, which contains tori $\TA=\alpha_1\times \ldots \times \alpha_{n+k}$ and $\TB=\beta_1\times \ldots \times \beta_{n+k}$. The set of intersection points $\boldsymbol{S}=\TA\cap \TB$ will be the generating set of our chain complexes. For $\xv\in \boldsymbol{S}$, we shall regard it as a point in $Sym^{n+k}(T^2)$, as well as a finite set of points in $g$. In our grid diagram, each pair of $\beta$ and $\alpha$ curves has $p$ intersection points between them. Consequently, we have $(n+k)!\cdot p^{n+k}$ generators in total.

	Note that the grid diagram endows the torus with a natural cell-complex structure: It has $p\cdot(n+k)^2$ 0-cells, $2p\cdot(n+k)^2$ 1-cells and $p\cdot(n+k)^2$ 2-cells. 
	
	A \emph{domain} from $\xv$ to $\yv$ is a 2-chain $D$ with $\partial D \cap \AV$ a path from $\xv$ to $\yv$ and $\partial D \cap \BV$ a path from $\yv$ to $\xv$.  We will use $\pi_2(\xv,\yv)$ to denote the set of domains from $\xv$ to $\yv$. When a domain from $\xv$ to $\yv$ is an embedded rectangle, it will be called a \emph{rectangle } from $\xv$ to $\yv$. A rectangle $r$ from $\xv$ to $\yv$ is \emph{empty} if $int (r)\cap \xv=\emptyset=\mathrm{int}(r)\cap \yv$. Denote by $\mathrm{Rect}(\xv,\yv)$ the set of rectangles from $\xv$ to $\yv$ and by $\mathrm{Rect}^\circ(\xv,\yv)$ the set of empty rectangles from $\xv$ to $\yv$. For a point $p$ living in the interior of some 2-cell, denote by $n_{p}(D)$ the multiplicity of $D$ at $p$. We abuse the notation $n_{P}(D)$ for the sum of multiplicities of all points in $P$, when $P$ is a finite set of points, each living in the interior of some 2-cell.
	
	\begin{defi}\label{definition of chain complex}
		We define three versions of chain groups and differentials: Here $\FF=\ZZ/2\ZZ$.
		\begin{itemize}
			\item Unblocked version: the chain group $CFK^-(g)$ is generated by $\boldsymbol{S}$ over $\FF[U_1,\ldots,U_{2n+k}]$, the differential counts empty rectangles:
			\[\partial^-\xv=\sum_{\yv\in \boldsymbol{S}} \sum_{\substack{r\in \mathrm{Rect}^\circ(\xv,\yv)\\ n_{\XXV}(r)=0}} \prod_{i=1}^{2n+k} U_{i}^{n_{O_i}(r)} \yv;\]
			
			\item Simply blocked version: $\widehat{CFK}(g)=CFK^-(g)/(U_1=\ldots U_{m+2n}=0)$;

			\item Fully blocked version: $\widetilde{CFK}(g)=CFK^-(g)/(U_1=\ldots U_{k+2n}=0)$. 
			
			Alternatively, take $\FF$ as the base ring, so that $\widetilde{CFK}(g)$ is a finitely generated vector space over $\FF$ with $\boldsymbol{S}$ as a basis. The differential counts empty rectangles without any marked points:
			\[\widetilde{\partial} \xv=\sum_{\yv\in \boldsymbol{S}} \sum_{\substack{r\in \mathrm{Rect}^\circ(\xv,\yv)\\n_{\XXV}(r)=0, n_{\OV}(r)=0}} \yv.\]
			
		\end{itemize}
	\end{defi}

	\begin{rmk}
		Although we use the notations ``CFK'' and ``HFK'' for our chain complexes and homology groups instead of ``GC'' and ``GH'', we only take grid diagrams into consideration in our paper.
	\end{rmk}
	Using the standard complex structure on $T^2$ and with the help of Lipshitz's formula (\cite{MR2240908}), one sees that one is exactly the Maslov index of an empty rectangle, and empty rectangles are the only possible ``disks'' in a grid diagram with such Maslov index. So our grid homology can be regarded as a combinatorial generalization of Heegaard Floer theory to singular links in lens spaces.
	
	To verify they are chain complexes, one can resort to analytic tools and consider broken flow lines. However, we can actually do this combinatorially. We illustrate this using the minus version. Observe that for any $\xv\in \boldsymbol{S}$, 
	\[\partial^- \circ \partial^-(\xv)= \sum_{\zv\in \boldsymbol{S}} \sum_{D \in\pi_2(\xv,\zv)} N(D) \prod_{i=1}^{2n+k} U_i^{n_{O_i}(D)}\zv,\]
	where $N(D)$ is the number of ways one can decompose $D$ as a union of empty rectangles that intersect $\XXV$ emptily. That is how many ways we can express $D$ as $D=r_1\ast r_2$ for some $\yv\in\boldsymbol{S}$, $r_1 \in \mathrm{Rect}^\circ (\xv,\yv)$, $r_2\in \mathrm{Rect}^\circ (\yv,\zv)$, and $r_1\cap \XXV= r_2\cap \XXV=\emptyset$. Here we use $\ast$ to denote juxtaposition of domains on a grid diagram $g$. Note that if $\prod_{i=1}^{2n+k} U_{i}^{n_i}\zv$ appears in $(\partial^-)^2 \xv$, $\xv -\xv\cap \zv$ can only consist of 0, 2, 3, or 4 points. The case of 1 can be ruled out by a direct geometric argument. Following \cite{Celoria_2023}, we denote the cardinality of this set by $M$. The case $M=0$ is easy: when $\xv=\zv$, $D$ can only be a linear combination of thin annuli, which is not allowed since it always contains an $X$ or $XX$ mark. Here and in the following, an annulus or a rectangle is called thin if it is of width or height one. So terms like $\prod_{i=1}^{2n+k} U_{i}^{n_i}\xv$ never appear in $(\partial^-)^{2} \xv$. For $M=3$ or $4$, the argument in \cite[Lemma 4.6.7]{MR3381987} shows that $N(D)=2$ whenever it is not zero. For $M=2$, it only appears when the lens space is not $S^3$, as now each pair of $\alpha$ and $\beta$ intersect more than once. A detailed analysis has been done in \cite[Proposition 2.11]{Celoria_2023}, which shows that $N(D)$ is again zero or two in this case. Thus we can conclude that $(\partial^-)^2=0$. 
	
	\begin{rmk}\label{rmk on N(D)}
		The proof of \cite[Proposition 2.11]{Celoria_2023} (or \cite[Proposition 3.6]{tripp2021gridhomologylensspace}), together with that of \cite[Lemma 4.6.7]{MR3381987} give a complete analysis of $N(D)$ when $D$ can be written as $r_1\ast r_2$, a juxtaposition of empty rectangles. An important observation is that the number of decompositions has nothing to do with what marks the domain contains, so their result can be used to analyze the composition of any pair of rectangle counting maps. This will play an important role in the following sections. 
	\end{rmk}

	\subsection{Gradings}\label{gradings}
	
	We will define two kinds of grading combinatorially on the chain complex. Instead of using flowlines and $\mathrm{Spin^c}$ structures, this will be done by fitting the grid diagram into a suitable coordinate system and defining a comparison function between tuples of points in the diagram. The idea of this construction comes from \cite{MR2429242}. Modification has been done due to ``singularities'' according to \cite{MR2529302}.
	
	We embed our grid diagram into $\RR^2$ by declaring the $\alpha$ circles $\{\alpha_i\}_{i=0}^{n+k-1}$ to be line $y=\frac{i}{n+k}, 0\le i\le n+k-1$ and the $\beta$ circles $\{\beta_i\}_{i=0}^{n+k-1}$ to be $y=-\frac{p}{q}(x-\frac{i}{p(n+k)})$. We shall take a preferred fundamental domain $0\le y< 1$, $-\frac{q}{p}y\le x<-\frac{q}{p}+1 $. As above, we shall denote our grid diagram by $g$. 
	
	We first introduce some auxiliary functions: consider a set-valued function \[W:\{\text{finite sets of points in $g$}\}\to \{\text{finite sets of pairs } (a,b) \text{ with } a\in [0,p(n+k)),b\in[0,n+k)\},\] which assigns to each set of points in $g$ the tuple of its $\RR^2$ coordinates written with respect to the basis \[\{\overrightarrow{v_1}=(\frac{1}{p(n+k)},0), \overrightarrow{v_2}=(-\frac{q}{p},\frac{1}{n+k})\}.\]

	We always assume that $X$, $O$, or $XX$ marks live in the center of their cells, respectively. For $\xv\in \boldsymbol{S}$, $W(\xv)$ is a set of integers while $W(\OV)$, $W(\XXV_r)$, and $W(\XXV\XXV)$ are sets of half-integers.
	
	We also need another coordinate transformation function \[C_{p,q}:\{\text{finite sets of pairs } (a,b)\text{ with }a\in [0,p(n+k)),b\in[0,n+k)\}\]
	\[\to \{\text{finite sets of pairs } (a,b) \text{ with } a,b\in[0,(n+k)p)\},\]
	sending a N-tuple of coordinates \[\{(a_i,b_i)\}_{i=0}^{N-1}\] to a pN-tuple of coordinates\[\{((a_i+(n+k)ql) \text{ mod } p(n+k),(b_i+(n+k)l) \text{ mod }p(n+k))\}_{i=0,l=0}^{i=N-1,l=p-1}.\]
	
	Now, let $\widetilde{W}$ be the composition $C_{p,q} \circ W$. 
	
	Further, we introduce a function $\mathcal{I}$ (originally from \cite[Section 4.3]{MR3381987}). It has as input an ordered pair $(A,B)$, each of which is a finite set of coordinate pairs. The output $\mathcal{I}(A,B)$ is the cardinality of pairs $(a_1,a_2)\in A$ and $(b_1,b_2)\in B$ for which $a_i<b_i$, $i=1,2$. Also define $\mathcal{J}(A,B)=\frac{1}{2}(\mathcal{I}(A,B)+ \mathcal{I}(B,A))$, which is the symmetrization of $\mathcal{I}$.
	
	Now we define relative gradings between $\xv$ and $\yv$ when they can be connected by a sequence of not necessarily empty rectangles. (For readers familiar with original Heegaard Floer theory, this is equivalent to that $\xv$ and $\yv$ come from same $\mathrm{Spin^c}$ structure.)
	
	The \emph{relative Maslov grading} is given by
	\begin{align}
		M(\xv,\yv) & = M_{\OV}(\xv,\yv)\\
		& =\frac{1}{p}(\mathcal{J}(\widetilde{W}(\xv),\widetilde{W}(\xv)-\mathcal{J}(\widetilde{W}(\yv),\widetilde{W}(\yv))-2\mathcal{J}(\widetilde{W}(\OV-\XXV\XXV),\widetilde{W}(\xv-\yv))).
	\end{align}
	
	Similarly, we define 
	\begin{align}
		M_{\XXV}(\xv,\yv) 
		& =\frac{1}{p}(\mathcal{J}(\widetilde{W}(\xv),\widetilde{W}(\xv)-\mathcal{J}(\widetilde{W}(\yv),\widetilde{W}(\yv))-2\mathcal{J}(\widetilde{W}(\XXV_r+\XXV\XXV),\widetilde{W}(\xv-\yv))).
	\end{align}

	Then the \emph{relative Alexander grading} is defined as \[A(\xv,\yv)=\frac{1}{2}(M_{\OV}(\xv,\yv)-M_{\XXV}(\xv,\yv)).\]
	
	We extend these gradings to the unblocked theory by letting \[A(U_i\xv,\xv)=-1,\]
	\[M(U_i\xv,\xv)=-2\]
	for any $\xv\in \boldsymbol{S}$ and $1\le i\le 2n+k$.

	We define a lift of these gradings to $\QQ$ following the convention in \cite{MR2429242}, in the sense that when our link has no singular point, our definition goes back to theirs. This is needed for us to compare gradings coming from different grid diagrams.
	
	\begin{align}\label{canonical lift of M}
		M(\xv)& = M_{\OV}(\xv)\\
		& =\frac{1}{p}(\mathcal{J}(\widetilde{W}(\OV-\XXV\XXV)-\widetilde{W}(\xv),\widetilde{W}(\OV-\XXV\XXV)-\widetilde{W}(\xv)))\\
		&+d(p,q,q-1)+\frac{p-1}{p},
	\end{align}
	\begin{align}
		M_{\XXV}(\xv)& =\frac{1}{p}(\mathcal{J}(\widetilde{W}(\XXV_r+\XXV\XXV)-\widetilde{W}(\xv),\widetilde{W}(\XXV_r+\XXV\XXV)-\widetilde{W}(\xv)))\\
		&+d(p,q,q-1)+\frac{p-1}{p},
	\end{align}
	
	\[A(\xv)=\frac{1}{2}(M_{\OV}(\xv)-M_{\XXV}(\xv)-(n+k-1)),\]
	in which $d(p,q,i)$ is the correction term of $L(p,q)$ defined as in \cite{MR1957829}, using a canonical identification of $\mathrm{Spin^c}(L(p,q))$ with $\ZZ/p\ZZ$.
	
	\begin{prop}
		In each of the three chain complexes $CFK^-(g)$, $\widehat{CFK}(g)$ and $\widetilde{CFK}(g)$, the differential preserves $A$ and lowers $M$ by 1.
	\end{prop}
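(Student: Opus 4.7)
The plan is to reduce the proposition to a single combinatorial identity for empty rectangles, which is the lens-space analogue of the rectangle computation in Chapter 4 of \cite{MR3381987}, extended to allow XX marks as part of the point set.

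The key identity I would establish is: for any empty rectangle $r\in \mathrm{Rect}^\circ(\xv,\yv)$ and any finite set $P$ of points lying in interiors of 2-cells of $g$,
\begin{equation}\label{eq:keyid}
\tfrac{1}{p}\bigl(\mathcal{J}(\widetilde{W}(\xv),\widetilde{W}(\xv))-\mathcal{J}(\widetilde{W}(\yv),\widetilde{W}(\yv))-2\mathcal{J}(\widetilde{W}(P),\widetilde{W}(\xv-\yv))\bigr)=1-2n_P(r).
\end{equation}
This is a purely combinatorial statement depending only on the cell structure of $g$ together with the positions of $r$, $\xv$, $\yv$ and $P$, and does not use any property of the marked points themselves. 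Granting \eqref{eq:keyid}, I would specialize to $P=\OV-\XXV\XXV=\OV$ (these sets are disjoint) and to $P=\XXV_r\cup\XXV\XXV=\XXV$ (with XX counted according to the paper's convention) to obtain
\[
M_{\OV}(\xv,\yv)=1-2n_{\OV}(r),\qquad M_{\XXV}(\xv,\yv)=1-2n_{\XXV}(r).
\]
For a rectangle contributing to $\partial^-$ one has $n_{\XXV}(r)=0$, so $M_{\XXV}(\xv,\yv)=1$ and hence $A(\xv,\yv)=-n_{\OV}(r)$. Combined with the $U_i$-shifts $M(U_i\xv)-M(\xv)=-2$ and $A(U_i\xv)-A(\xv)=-1$, a direct calculation gives
\[
M\bigl(\textstyle\prod U_i^{n_{O_i}(r)}\yv\bigr)-M(\xv)=-1,\qquad A\bigl(\textstyle\prod U_i^{n_{O_i}(r)}\yv\bigr)-A(\xv)=0,
\]
which is exactly the claim for $\partial^-$. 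Since $\widehat{CFK}$ and $\widetilde{CFK}$ are quotients of $CFK^-$ by ideals generated by the $U_i$, the bigrading descends and the analogous shifts hold for $\widehat\partial$ and $\widetilde\partial$ automatically.

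The bulk of the work is the identity \eqref{eq:keyid}. For $S^3$ grids this is the classical rectangle computation. The lens-space twist enters only through $\widetilde{W}=C_{p,q}\circ W$, which unfolds the torus to a $p$-fold planar picture. Since an empty rectangle $r$ is contained in a single fundamental domain of the unfolding, its $p$ translates are disjoint and contribute symmetrically to each of the three $\mathcal{J}$-terms on the left of \eqref{eq:keyid}; each term therefore splits as $p$ times a ``single-domain'' contribution plus ``cross-terms'' coming from pairs of lifts in distinct translates. Because $\xv$ and $\yv$ coincide outside the two $\alpha$-rows and two $\beta$-columns bounding $r$, the cross-terms involving $\widetilde{W}(\xv)$ and those involving $\widetilde{W}(\yv)$ are equal and cancel in the combination on the left of \eqref{eq:keyid}. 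What remains, after dividing by $p$, is exactly the planar formula $1-2n_P(r)$. This parallels Propositions~3.3--3.5 of \cite{MR2429242} and uses essentially the same bookkeeping as there, with $P$ now allowed to be arbitrary.

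The main obstacle will be the cross-term cancellation in the $p$-fold unfolded picture: each of $\widetilde{W}(\xv)$, $\widetilde{W}(\yv)$ has $p(n+k)$ points, so one has to organize $p(p-1)$ classes of cross-pairs and check that they match between the $\xv$- and $\yv$-sums. Once one follows the argument of \cite{MR2429242} and sets up the correspondence between lifts induced by the $\ZZ/p$-action, the matching is forced by the fact that $\xv-\yv$ is supported in a single fundamental domain, and the proof becomes formal.
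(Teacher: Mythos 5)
Your proposal follows essentially the same route as the paper: the paper's proof is precisely an appeal to the lifting argument (Proposition 2.11 of Celoria), i.e.\ use $\widetilde{W}$ to unfold the diagram to the $p$-fold cover in $S^3$, where the claim reduces to the classical planar rectangle computation, and your key identity with the diagonal/cross-term bookkeeping is a hands-on version of exactly that reduction. One caveat: in the paper's grading formulas $\OV-\XXV\XXV$ and $\XXV_r+\XXV\XXV$ are \emph{formal} differences and sums of point sets, with $\mathcal{J}$ extended bilinearly --- not set-theoretic operations. So your specialization ``$P=\OV-\XXV\XXV=\OV$'' is a misreading, and the resulting general formula $M_{\OV}(\xv,\yv)=1-2n_{\OV}(r)$ is wrong for rectangles crossing an XX mark; the correct statement is $M_{\OV}(\xv,\yv)=1+2n_{\XXV\XXV}(r)-2n_{\OV}(r)$, as in Lemma \ref{relative grading through funtion n}. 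This does not affect the proposition, since every rectangle counted by the differentials satisfies $n_{\XXV}(r)=0$ and the two formulas then agree, and your subsequent $U_i$-shift computation and the descent to the quotient complexes are correct; but you should state the intermediate identity with the formal-difference convention, since the paper later needs it for maps whose rectangles do cross XX marks.
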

	
	This can be shown exactly in the same way as in \cite[Proposition 2.11]{Celoria_2023}. This is an argument of lifting the diagram to one for a singular link in $S^3$, which is what the function $\widetilde{W}$ does. We will give a detailed description of how to use lift to compute gradings in \text{Subsection \ref{Proof of invariance}}.

	\subsection{Definition of homology theories}\label{Definition of homology theories}
	We define three versions of homology for singular links using the three chain complexes defined in the previous two sections. We shall prove later that the first two are invariants for singular links in $L(p,q)$ while the third one is closely related to the second one in the sense we shall make precise in \text{Proposition \ref{tilde vs hat}}. Here we fix a singular link with $m$ $S^1$ components and $n$ thick edges as above and choose a grid diagram $g$ for $L$ of size $n+k$.
	
	\begin{defi}
		\begin{itemize}
			\item The unblocked grid homology is defined as \[HFK^-(L)=H_*((CFK^-(g),\partial^-)),\] viewed as an $\FF[U_1,\ldots,U_{m+2n}]$ module.
			
			\item The simply blocked grid homology is defined as \[\widehat{HFK}(L)=H_{*}((\widehat{CFK}(g),\widehat{\partial})),\] viewed as an $\FF$ vector space, where $\widehat{\partial}$ is the differential inherited from $\partial^-$.
			
			\item The fully blocked grid homology is defined as \[\widetilde{HFK}(L)=H_{*}((\widetilde{CFK}(g),\widetilde{\partial})),\]viewed as an $\FF$ vector space. 
		\end{itemize}
		
		Each homology group is relatively bigraded by $M$ and $A$ defined above. 
	\end{defi}
	
	To show that $HFK^-(L)$ is well-defined as a $\FF[U_1,\ldots,U_{m+2n}]$ module and $\widehat{HFK}(L)$ is well-defined as a $\FF$ vector space, we need the following propositions.
	
	\begin{prop}\label{homotopic U_i action}
		Consider any grid diagram $g$,
		\begin{enumerate}
			\item If $O_i$ and $O_j$ belong to the same thin edge or the same $S^1$ component of the underlying graph, then the multiplication by $U_i$ is chain homotopic to the multiplication by $U_j$.
			\item For any singular point $XX$, let $O_i$, $O_j$ be $O$-marks sharing the same row, and $O_s$, $O_l$ be marks sharing the same column, then the multiplication by $U_iU_j$ is chain homotopic to the multiplication by $U_sU_l$.
		\end{enumerate}
	\end{prop}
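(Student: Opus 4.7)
The plan is to construct explicit chain homotopy operators that count empty rectangles passing through a distinguished marking, following the strategy of Lemma 4.6.9 in \cite{MR3381987}, adapted to accommodate both the singular XX markings and the richer rectangle combinatorics that arise in lens space grid diagrams.

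For part (1), fix a regular X marking that shares a row with $O_i$ and a column with $O_j$, and define the $\FF[U_1,\ldots,U_{2n+k}]$-linear map
\[
H_X(\xv) = \sum_{\yv \in \Sbold} \sum_{\substack{r \in \mathrm{Rect}^\circ(\xv,\yv) \\ n_X(r) = 1,\ n_{(\XXV \cup \OV)\setminus\{X\}}(r) = 0}} \prod_{i=1}^{2n+k} U_i^{n_{O_i}(r)}\, \yv.
\]
Expanding $\partial^- \circ H_X + H_X \circ \partial^-$ yields a sum over domains $D$ that decompose as a juxtaposition of two empty rectangles, exactly one of which contains $X$. Invoking the decomposition analysis recorded in Remark~\ref{rmk on N(D)}, which carries over once the distinguished marking is fixed, the contributions from all domains other than thin annuli through $X$ cancel in pairs. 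The horizontal thin annulus through $X$ contributes $U_j$ and the vertical thin annulus contributes $U_i$, so over $\FF = \ZZ/2\ZZ$ one obtains $\partial^- H_X + H_X \partial^- = U_i + U_j$. Chaining these homotopies along the sequence of regular X markings visited when traveling along a thin edge or $S^1$ component then yields $U_i \simeq U_j$ whenever $O_i$ and $O_j$ lie on the same component.

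For part (2), I replace the distinguished X with the XX at the given singular point and define $H_{XX}$ analogously: it counts empty rectangles containing exactly the chosen XX and no other X, XX, or O marking. The same reasoning applies, but now the horizontal thin annulus through XX contains both O-markings in the singular row and the vertical thin annulus contains both O-markings in the singular column. Hence the row annulus contributes the product $U_s U_l$ while the column annulus contributes $U_i U_j$, giving $\partial^- H_{XX} + H_{XX} \partial^- = U_i U_j + U_s U_l$, which is the claimed homotopy.

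The main obstacle will be verifying that the pairing-of-decompositions argument underlying Remark~\ref{rmk on N(D)} still produces full cancellation once one imposes the constraint that exactly one rectangle in the juxtaposition contains the distinguished X or XX. This is automatic away from the distinguished marking, since the involution on pairs $(r_1,r_2)$ acts locally and preserves the positions of all other markings; but the $M = 2$ case of Proposition 2.11 in \cite{Celoria_2023} must be re-examined, because that case is peculiar to lens spaces with $p \ge 2$ and the distinguished marking may force one of the two candidate decompositions to be degenerate. Handling this boundary case amounts to a short local analysis parallel to Proposition 3.6 in \cite{tripp2021gridhomologylensspace}, after which identifying the surviving thin-annulus terms is routine.
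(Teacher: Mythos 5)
Your overall strategy---a homotopy operator counting empty rectangles through a distinguished $X$ (resp.\ $XX$), cancellation of the non-annular terms via the decomposition analysis of Remark \ref{rmk on N(D)}, and the two thin annuli supplying the $U$-multiplications---is exactly the paper's. But the operator you wrote down is not the right one: you impose $n_{(\XXV\cup\OV)\setminus\{X\}}(r)=0$, i.e.\ you forbid the counted rectangles from containing any $O$-marking (which, incidentally, makes your factor $\prod U_i^{n_{O_i}(r)}$ identically $1$). The correct condition, used in the paper and in Lemma 4.6.9 of \cite{MR3381987}, is only that $int(r)\cap\XXV=\{X\}$: $O$-markings are allowed in $r$ and are recorded by the exponents $n_{O_i}(r)$.

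The extra restriction breaks the argument in two places. First, for a composite domain $D$ from $\xv$ to $\zv$ containing the distinguished $X$ once and some $O$-marking, the two decompositions $D=r_1\ast r_2=r_1'\ast r_2'$ guaranteed by Remark \ref{rmk on N(D)} in general distribute that $O$ differently relative to $X$ (for an L-shaped domain the two cuts place the cells of one arm in different factors); if the $O$ sits in the same rectangle as $X$ in one decomposition but not in the other, only one decomposition satisfies your condition, the pair fails to cancel, and $\partial^-\circ H_X+H_X\circ\partial^-$ acquires unwanted off-diagonal terms. Second, even the diagonal terms go wrong: for each $\xv$ the thin row annulus through $X$ has a unique decomposition $r_1\ast r_2$, determined by the two components of $\xv$ on the bounding $\alpha$-curves, and for those $\xv$ for which this decomposition places the row's $O$ in the same factor as $X$, your operator contributes $0$ instead of the required $U$-multiplication. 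Once the condition is relaxed to exclude only the other $X$/$XX$ markings, both problems disappear: whether a decomposition is counted no longer depends on where the $O$'s fall, the total $U$-power depends only on $D$, and every $\xv$ picks up exactly $U_i+U_j$ (resp.\ $U_iU_j+U_sU_l$) from the two annuli. The remainder of your proposal---chaining homotopies along a thin edge or $S^1$ component for non-adjacent $O_i,O_j$, and re-examining the $M=2$ case special to $p\ge 2$---is sound and matches the paper.
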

	\begin{proof}
		These statements can be proved in the same vein as \cite[ Proposition 4.21]{MR3677933}. One can also refer to   \cite[Lemma 4.6.9]{MR3381987}. For readers' convenience, we also provide a self-contained proof. The detail is as follows:
		
		For (1), we may suppose that $O_i$ and $O_j$ are connected to the same $X$ marked point, say $X_l$. Define a map $H_{X_l}:CFK^-(g) \to CFK^-(g)$ by \[H_{X_l}(\xv)=\sum_{\yv\in \boldsymbol{S}} \sum_{\substack{r\in \mathrm{Rect}^\circ(\xv,\yv)\\Int(r)\cap \XXV=X_l}} \prod_{t=1}^{2n+k} U_{t}^{n_{O_t}(r)} \yv.\] 
		
		Now we want to analyze how $\partial^-\circ H_{X_l}+H_{X_l} \circ\partial^-$ acts on generators. Note that this is a sum of compositions of rectangle counting maps, so \text{Remark \ref{rmk on N(D)}} applies. For each pair of $\xv\ne\zv$ differing by at most four points and each positive domain $D$ connecting them that intersects $\XXV$ in $X_l$ once, there can only be zero or two ways to decompose $D$ into two rectangles. In case of two, say $D=r_1\ast r_2= r'_1\ast r'_2$, then we must have one of $r_1$, $r_2$ containing $X_l$, the other does not, and same hold for $r'_1$ and $r'_2$. Thus, the two decompositions both contribute to $\partial^-\circ H_{X_l}+H_{X_l} \circ\partial^-$ and get canceled mod 2. When $\xv=\zv$, only two thin annuli through $X_l$ contribute to the map, which gives rise to the term $(U_i-U_j) \xv$.
		Thus \[\partial^-\circ H_{X_l}+H_{X_l} \circ\partial^-=U_i-U_j,\] showing the desired homotopy relationship.
		
		For (2), assume we are considering $XX_1$. We only need to modify the definition of $H$ into \[H_{XX_1}(\xv)=\sum_{\yv\in \boldsymbol{S}} \sum_{\substack{r\in \mathrm{Rect}^\circ(\xv,\yv)\\Int(r)\cap \XXV=XX_1}} \prod_{t=1}^{2n+k} U_{t}^{n_{O_t}(r)} \yv.\]
		
		Again, all the other terms in $\partial^-\circ H_{XX_1}+H_{XX_1} \circ\partial^-$ cancel in pairs; only the two thin annuli through $XX_1$ contribute multiplication by $U_iU_j-U_sU_l$. That is \[\partial^-\circ H_{XX_1}+H_{XX_1} \circ\partial^-=U_iU_j-U_sU_l,\]showing the desired homotopy relationship.
		
	\end{proof}

	\begin{prop}\label{tilde vs hat}
		For $g$ as described above, we have an isomorphism of relatively graded vector spaces \[\widetilde{HFK}(L)=\widehat{HFK}(L)\otimes W^{\otimes k-m},\] in which $W$ is a 2-dimensional bigraded vector space  spanned by generators in bigrading $(0,0)$ and $(-1,-1)$.
		
	\end{prop}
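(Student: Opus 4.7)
The plan is to interpolate between $\widehat{CFK}(g)$ and $\widetilde{CFK}(g)$ by successively quotienting out by the $k-m$ ``extra'' variables $U_{2n+m+1}, \ldots, U_{2n+k}$, and to show that each quotient step tensors the homology with $W$. The whole argument then reduces to two ingredients: a suitably natural nullhomotopy for each extra $U_i$, and a short-exact-sequence computation of bigradings.

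The key input is Proposition~\ref{homotopic U_i action}(1). By the labelling convention, $O_1,\ldots,O_{2n+m}$ sit on distinct thin edges or $S^1$-components, so for each $i \in \{2n+m+1, \ldots, 2n+k\}$ the marked point $O_i$ shares a thin edge or an $S^1$-component with some $O_{f(i)}$ with $f(i) \le 2n+m$. The proposition produces a chain homotopy $H_i$ counting empty rectangles through the connecting X-mark, with $\partial^- H_i + H_i \partial^- = U_i - U_{f(i)}$. Because the rectangle counts are weighted by $\prod_t U_t^{n_{O_t}(r)}$, the map $H_i$ commutes with every $U_t$-multiplication, i.e.\ it is $\FF[U_1,\ldots,U_{2n+k}]$-linear; comparing bidegrees ($\partial^-$ has bidegree $(-1,0)$ and $U_j$ has bidegree $(-2,-1)$) forces $\deg H_i = (-1,-1)$. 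Consequently $H_i$ descends to $\widehat{CFK}(g) = CFK^-(g)/(U_1,\ldots,U_{2n+m})$, where it exhibits $U_i$ as nullhomotopic, and by the same linearity it continues to descend through any further quotient by a subset of $\{U_{2n+m+1},\ldots,U_{2n+k}\}$.

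Since $CFK^-(g)$ is a free $\FF[U_1,\ldots,U_{2n+k}]$-module with basis $\boldsymbol{S}$, the complex $\widehat{CFK}(g)$ is free as a module over $R = \FF[U_{2n+m+1},\ldots,U_{2n+k}]$. In particular, for each $j > 2n+m$, multiplication by $U_j$ is injective and fits into a short exact sequence of bigraded chain complexes
\begin{equation*}
0 \longrightarrow \widehat{CFK}(g)[-2,-1] \xrightarrow{\ U_j\ } \widehat{CFK}(g) \longrightarrow \widehat{CFK}(g)/U_j \longrightarrow 0,
\end{equation*}
where the shift $[-2,-1]$ is chosen so that $U_j$ preserves bigrading. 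Because $U_j$ is nullhomotopic, the induced map on homology vanishes and the associated long exact sequence splits into bigraded short exact sequences
\begin{equation*}
0 \to H_{(M,A)}(\widehat{CFK}(g)) \to H_{(M,A)}(\widehat{CFK}(g)/U_j) \to H_{(M+1,A+1)}(\widehat{CFK}(g)) \to 0,
\end{equation*}
which split over the field $\FF$. This yields $H_*(\widehat{CFK}(g)/U_j) \cong H_*(\widehat{CFK}(g)) \otimes W$, with $W$ spanned by generators in bigradings $(0,0)$ and $(-1,-1)$.

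Finally, we iterate: the quotient $\widehat{CFK}(g)/U_j$ is still free over the remaining polynomial ring, and the remaining $U_{j'}$ still act nullhomotopically by the first step, so the same short-exact-sequence argument applies. Doing this $k - m$ times produces the desired isomorphism $\widetilde{HFK}(L) \cong \widehat{HFK}(L) \otimes W^{\otimes (k-m)}$ of relatively bigraded $\FF$-vector spaces. The main technical subtlety is the first step, namely ensuring that the Proposition~\ref{homotopic U_i action} nullhomotopies are $R$-linear and hence persist through each successive quotient; once this is granted the rest is a standard homological-algebra argument that, in the special case $n=0$ and $m=M$, reduces to the classical splitting for non-singular grid homology (cf.\ \cite{MR3381987}).
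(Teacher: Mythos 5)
Your proposal is correct and follows essentially the same route as the paper's proof: the key observation that the extra $U_j$'s become nullhomotopic (hence zero on homology) after quotienting by $U_1,\ldots,U_{2n+m}$, the resulting short exact sequence of homology groups obtained from the long exact sequence of the quotient-by-$U_j$ sequence, and iteration over the remaining $k-m$ variables. Your write-up simply makes explicit some details (module-linearity of the homotopies, the bidegree bookkeeping, and the splitting over the field) that the paper leaves implicit.
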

	
	\begin{proof}
		A basic but important observation is that when multiplication by $U_i$ and $U_j$ are homotopic maps, then when $U_i$ is set to be zero, $U_j$ induces a zero map on the homology of the new chain complex.
		
		This observation, together with the exact sequence of chain complexes (we abbreviate the chain group as $C$), \[0\rightarrow \frac{C}{U_1=\ldots =U_{2n+m}=0} \xrightarrow{U_{2n+m+1}} \frac{C}{U_1=\ldots =U_{2n+m}=0} \rightarrow \frac{C}{U_1=\ldots =U_{2n+m+1}=0} \rightarrow 0\] leads to a short exact sequence between different homology groups: 
		\[0\rightarrow H_*(\frac{C}{U_1=\ldots =U_{2n+m}=0}) \rightarrow H_*(\frac{C}{U_1=\ldots =U_{2n+m}=U_{2n+m+1}=0}) \rightarrow H_*(\frac{C}{U_1=\ldots =U_{2n+m}=0}) \rightarrow 0,\]
		in which the second arrow is homogeneous and the third one is of bigrading $(1,1)$.
		
		Then the proof is finished by repeating the process for each $O$ with subscript greater than $2n+m$.
		
	\end{proof}

	\subsection{Computing examples}\label{Computing examples}
	In this subsection, we compute $\widetilde{HFK}(K)$ for some singular knots in $L(2,1)$ and $L(3,1)$.

	In \text{Figure \ref{fig:example of singular knot in L(2,1)}}, we show two grid diagrams for a singular knot $K_1$ with one thick edge in $L(2,1)$, of grid number three and two, respectively. We shall denote the diagrams by $g_a$ and $g_b$ as we labeled in the figure. In $g_a$, we have forty-eight generators and fifty-six rectangles without any marked points in their interiors. In $g_b$, we have eight generators and three rectangles without any marked points in their interiors. Using some computer program, we compute that 
	\[\widetilde{HFK}(g_a)\cong \FF^8\] with four generators in each $\mathrm{Spin^c}$ structure and 
	\[\widetilde{HFK}(g_b)\cong \FF^4\]with two generators in each $\mathrm{Spin^c}$ structure.
	
	This demonstrates the computable aspect of our theory and also verifies the rank formula in \text{Proposition \ref{tilde vs hat}}.

	\begin{figure}
		\begin{overpic}[width=0.9\textwidth]{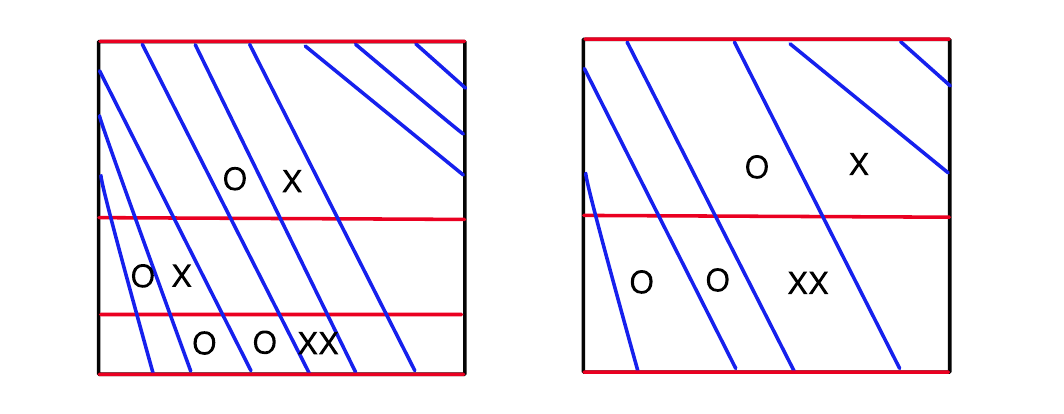}
			\put(25,-2){$g_a$}
			\put(72,-2){$g_b$}
			
		\end{overpic}
		
		\caption{\textbf{Two grid diagrams for a singular knot with one thick edge in $L(2,1)$} }
		\label{fig:example of singular knot in L(2,1)}
	\end{figure}
	
	In \text{Figure \ref{fig:example of singular knot in L(3,1)}}, we show a grid diagram $g$ for a singular knot $K_2$ with one thick edge in $L(3,1)$ of grid number two. In $g$, we have eighteen generators and ten rectangles without any marked points in their interiors. With the help of some computer program, we compute that 
	\[\widetilde{HFK}(g)\cong \FF^6,\] with two generators in each $\mathrm{Spin^c}$ structure.
	
	\begin{figure}
		\begin{overpic}[width=0.5\textwidth]{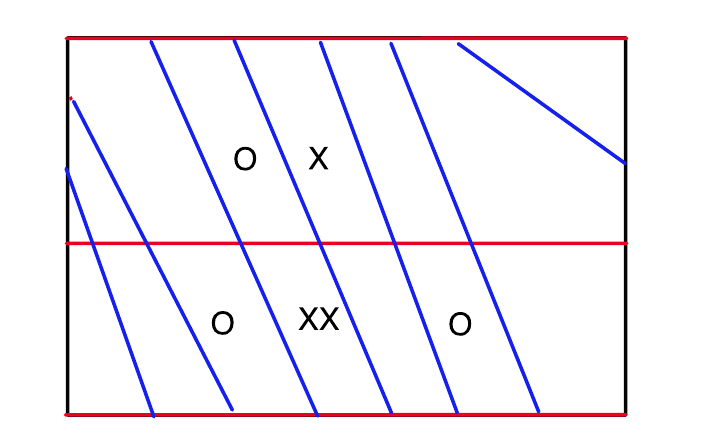}
			\put(45,-2){$g$}
		\end{overpic}
		\caption{\textbf{A grid diagram for a singular knot with a single thick edge in $L(3,1)$}}
		\label{fig:example of singular knot in L(3,1)}
	\end{figure}
	
	We can appeal to \text{Proposition \ref{tilde vs hat}} and conclude that \[\widehat{HFK}(L(2,1),K_1)=\FF\oplus \FF\] with one $\FF$ from each $\mathrm{Spin^c}$ structure and \[\widehat{HFK}(L(3,1),K_2)=\FF\oplus \FF\oplus \FF,\] with one $\FF$ from each $\mathrm{Spin^c}$ structure; 
	
	\subsection{Proof of invariance}\label{Proof of invariance}
	Using \text{Theorem \ref{grid moves is complete}} in \text{Subsection \ref{Moves on grid diagram}}, we only need to show that if $g$ and $g'$ are two grid diagrams that differ by a commutation or stabilization, then $CFK^\circ(g)$ and $CFK^\circ(g')$ are quasi-isomorphic bigraded chain complexes, for $\circ=\widehat{  }$ or $-$. That is
	\begin{thm}
		Fix a singular link $L$ in some lens space $L(p,q)$ as above. If $g_1$ and $g_2$ are grid diagrams for $(L(p,q),L)$, then we have quasi-isomorphisms \begin{itemize}
			\item $CFK^-(g_1)\simeq CFK^-(g_2)$;
			\item $\widehat{CFK}(g_1)\simeq \widehat{CFK}(g_2)$,
		\end{itemize}
		as relatively bigraded chain complexes of modules over $\FF[U_1,\ldots, U_{2n+m}]$ and over $\FF$, respectively.  
		In particular, we have well-defined homology theories for singular links in lens spaces: \begin{itemize}
			\item Unblocked grid homology: $HFK^-(L)$ as relatively bigraded modules over $\FF[U_1,\ldots, U_{2n+m}]$ and 
			\item Simply block grid homology: $\widehat{HFK}(L)$ as relatively bigraded modules over $\FF$.
		\end{itemize}
		
	\end{thm}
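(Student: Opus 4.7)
The plan is to invoke Theorem \ref{grid moves is complete} to reduce the invariance to proving quasi-isomorphism under two moves: commutation and (de)stabilization. Cyclic permutation is absorbed into these by the final lemma of the previous section. For each move I would mimic the strategy developed for classical grid homology in \cite{MR3381987} and adapted to spatial graphs in \cite{MR3677933}, with additional input from \cite{Celoria_2023} and \cite{tripp2021gridhomologylensspace} to handle the multiple intersection points coming from the lens space background. Throughout, I would verify that the chain maps and homotopies respect both the relative Maslov and Alexander gradings by tracking their effect on the combinatorial formulas defined in Subsection \ref{gradings}.

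For commutation invariance, I would construct an explicit chain map between $CFK^-(g)$ and $CFK^-(g')$ by counting empty pentagons connecting generators of the two diagrams, in the spirit of Chapter 5 of \cite{MR3381987}. Its chain map property reduces to counting decompositions of certain domains in exactly the same form analyzed in Remark \ref{rmk on N(D)}; the point is that what is counted is a composition of a rectangle-counting map with a pentagon-counting map, so the analysis of how many ways a domain decomposes only depends on its shape, not on which marks it contains. To show the map is a quasi-isomorphism, I would construct the inverse map by pentagons on the other side and a chain homotopy by counting empty hexagons, again verifying the homotopy identities via a decomposition count. The only new ingredient compared with the $S^3$ case is that each $\beta_i \cap \alpha_j$ contains $p$ points, so pentagons and hexagons must be defined as properly embedded domains on the torus (as in \cite{MR2429242}) with weighted counts handled correctly.

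Stabilization invariance is the main obstacle. For stabilizations at an $O$ or $X$ point, the argument follows verbatim from \cite{MR2429242} (for the lens space combinatorics) combined with the mapping cone description from Chapter 13 of \cite{MR3381987}: the stabilized complex decomposes as a mapping cone whose acyclic part is killed via a filtration argument, leaving a complex quasi-isomorphic to the unstabilized one. The genuinely new case is stabilization at an $XX$ point, where there are sixteen rather than four possible outcomes. Here I would follow the scheme of \cite{MR3677933}: choose a specific stabilization direction as a reference, construct the mapping cone decomposition of $CFK^-(g')$ using the canonical intersection point introduced by the stabilization, and identify the quotient as a deformation of $CFK^-(g)$. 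The homotopy relations needed to kill the acyclic summand require Proposition \ref{homotopic U_i action} in its second form, since the generator-weighted $U_iU_j - U_sU_l$ relation is exactly what appears when simplifying the cone at an $XX$ stabilization. The other fifteen stabilization outcomes would then be shown to produce isomorphic complexes by chaining together commutations that connect them to the reference direction, so that $XX$-stabilization reduces to the single chosen case.

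Finally, I would check that every constructed quasi-isomorphism is homogeneous with respect to the relative $(M,A)$-bigrading. For pentagons and hexagons, this is done by decomposing such a domain as a rectangle juxtaposed with a bigon or annulus and reading off the grading shift from the formulas in Subsection \ref{gradings}; for the stabilization cone, the relative grading on each summand is determined by the reference intersection point and matches under the mapping cone identification up to an overall shift absorbed by the $U$-action conventions. The simply blocked and unblocked cases are handled simultaneously since all maps are $\FF[U_1,\dots,U_{2n+m}]$-equivariant and the blocking specialization $U_1 = \dots = U_{2n+m} = 0$ respects them. The hardest accounting, and the step I expect to require the most care, is verifying that in the $XX$-stabilization the mapping cone identification survives the $U_iU_j = U_sU_l$ relation rather than only the simpler $U_i = U_j$ relation available at a non-singular mark.
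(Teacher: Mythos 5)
Your overall architecture matches the paper's: reduce to commutation and (de)stabilization via Theorem \ref{grid moves is complete}, prove commutation invariance with pentagon-counting maps and a hexagon homotopy whose identities follow from the decomposition analysis of Remark \ref{rmk on N(D)}, and prove stabilization invariance by splitting $\boldsymbol{S}(g')$ at the distinguished intersection point into $\BI\cup\BN$ and exhibiting $CFK^-(g')$ as a mapping cone. The grading verification by juxtaposing with bigons/annuli and appealing to the formulas of Subsection \ref{gradings} is also how the paper proceeds (the paper additionally lifts to the universal cover $S^3$ to justify those formulas).

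However, there is a concrete error in your treatment of $XX$-stabilization. You assert that the relation appearing when simplifying the cone is the quadratic one $U_iU_j-U_sU_l$ from Proposition \ref{homotopic U_i action}(2), and you flag ``surviving'' that relation as the hardest step. In fact no quadratic relation enters: for both $X{:}SW$ and $XX{:}SW$ stabilizations the newly introduced mark $X_{\text{new}}$ is a \emph{regular} $X$, so the only non-cancelling domains in $e\circ H_{X_{\text{new}}}\circ\partial^{\BN}_{\BI}$ are the two thin annuli through $X_{\text{new}}$, each a regular row or column containing a single $O$; they contribute multiplication by $U_{\text{new}}-U_j$ with $O_j$ on the same thin edge or $S^1$ component as $O_{\text{new}}$. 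The complex is therefore identified with $\mathrm{Cone}(U_{\text{new}}-U_j)$ on $CFK^-(g)[U_{\text{new}}]$, and the algebraic lemma $H(\mathrm{Cone}(U_{\text{new}}-U_j))\cong H(C)$ applies precisely because this element is linear (the quotient by $U_{\text{new}}=U_j$ recovers $C$); a cone on $U_iU_j-U_sU_l$ could not be disposed of this way. Proposition \ref{homotopic U_i action}(1), not (2), is what is needed, and only to show the choice of $O_j$ among the marks on that edge or component is immaterial. If you pursue the quadratic relation as written, the final algebraic step of your argument does not close. A minor further point: the paper does not treat $O$- and $X$-stabilizations separately and then handle $XX$ as a new case; rather it first reduces \emph{every} stabilization to type $XX{:}SW$ or $X{:}SW$ by commutations and then runs a single uniform mapping-cone argument, which is why the sixteen $XX$-outcomes cause no extra work.
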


	Since $\widehat{CFK}$ appears as a quotient of $CFK^-$, the proof is easier and similar. We will focus on the second one.

	\subsubsection{Commutation invariance}\label{Commutation invariance}
	We will use an important observation in \cite{MR3677933}: the existence of line segments $LS_1$ and $LS_2$ leads to a well-defined ``combined'' grid diagram. That is, when $g$ and $g'$ differ by a single commutation move, we can combine  the information of $g$ and $g'$ into a single diagram. See \text{Figure \ref{fig:commutation}} for examples. The left two pictures show $g$ and $g'$, and the right pictures show the combined grid diagram and those polygons we shall use later.
	
	Here we illustrate how to fit the proof in \cite{MR3381987} into our case for column commutation, the row case is exactly the same. Here we use $\gamma$ to denote those $\beta$-curves after commutation.

	\begin{figure}
		\centering
		\begin{overpic}[width=1.0\textwidth]{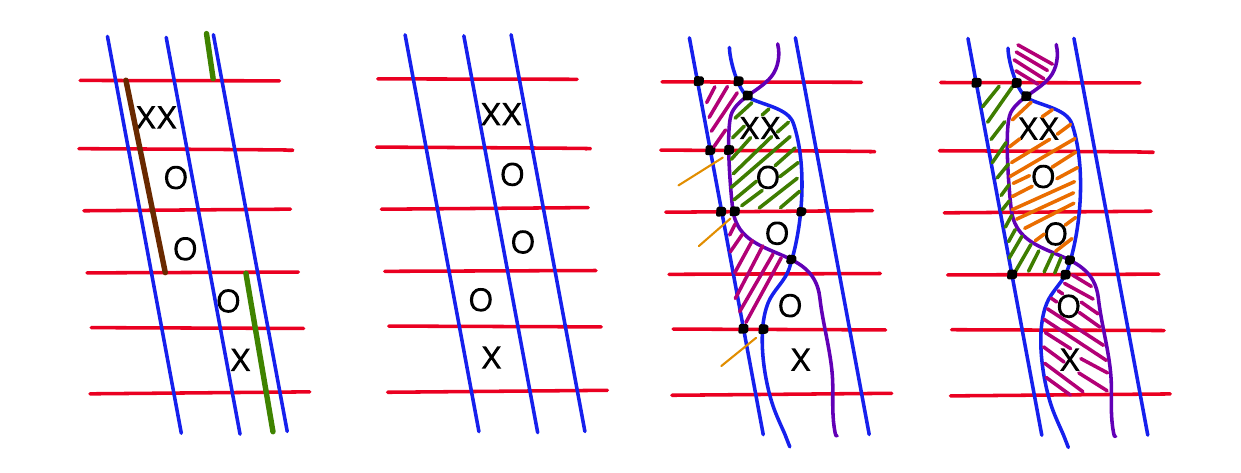}
			\put(5,29) {$LS_1$}
			\put(22,13) {$LS_2$}
			
			\put(60,33) {$x_1$}
			\put(53,33) {$y'_1$}
			\put(62,30) {$a$}
			\put(54,26.5) {$x_2$}
			\put(52.5,22.5) {$y'_2$}
			\put(65,16.5) {$b$}
			\put(57,9.6) {$x'_4$}
			\put(56,8) {$y_4$}
			\put(54,17) {$x'_3$}
			\put(64.7,19) {$x_3$}
			\put(55.5,19.8) {$y_3$}
			
			\put(85,30) {$a$}
			\put(79,32.5) {$x_1$}
			\put(76,32.5) {$y_1$}
			\put(78,14.5) {$x_2$}
			\put(82.5,14.5) {$y_2$}
			\put(88,16.5) {$b$}
			
			\put(58,35) {$\beta_i$}
			\put(61.5,35) {$\gamma_i$}
			\put(80.5,35) {$\beta_i$}
			\put(84,35) {$\gamma_i$}

		\end{overpic}
		
		\caption{\textbf{Commutation move and combined grid diagram.} In all pictures here, the $\alpha$-curves are in red and the $\beta$ curves are in blue. In the combined diagrams, we use $\gamma$ to denote the $\beta$ curves after commutation. We have $\gamma_j=\beta_j$ for $j\ne i$, and $\gamma_i$ is drawn in purple which intersect $\beta$ transversely in two points, $a$ and $b$. In (a) and (b), we draw $g$ and $g'$, respectively. The brown and green segments in (a) are $LS_1$ and $LS_2$, respectively, showing the commutation is available between these two columns. (c) and (d) are two copies of the combined grid diagram for this commutation. In (c), we mark two pentagons in pink: the upper one with a corner at $a$ contributes to $P$, the lower one with a corner at $b$ contributes to $P'$. Also in (c), we show a small triangle from $\xv$ to the nearest $\xv'$ in green. In (d), we show a hexagon in green that contributes to the map $H$, and also two bigons with corners $a$, $b$ in orange and red.}
		\label{fig:commutation}
	\end{figure}
	
	Now we introduce the notion of a pentagon on a combined grid diagram. For $\xv\in \boldsymbol{S}(g)$ and $\yv' \in \boldsymbol{S}(g')$ satisfying $|\xv\cap \yv'|= n+k-2$, say $x_i=y'_i$ for $i\ge 3$, a \emph{pentagon} $p$ connecting $\xv$ to $\yv'$ is one embedded in the combined diagram with 
	\begin{itemize}
		\item Four corners of $p$ are at $\{x_1,x_2,y'_1,y'_2\}$, respectively;
		\item Each corner point $x$ of $p$ is an intersection between two curves from $\{\beta_t,\alpha_t,\gamma_{t}\}_{t=1}^{n+k}$. A small disk centered at $x$ is divided into four quadrants by these two curves, and $p$ contains exactly one of them.
		\item $\partial_{\alpha} p=\partial p\cap\AV=\yv'-\xv$. 
	\end{itemize}
	
	$\mathrm{Pent}(\xv,\yv')$ will denote the set of pentagons from $\xv$ to $\yv'$. A pentagon $p$ is \emph{empty} if $\mathrm{int}(p)\cap \xv=\emptyset$. The set of empty pentagons from $\xv$ to $\yv'$ will be denoted by $\mathrm{Pent}^\circ(\xv,\yv')$. Set $\mathrm{Pent}(\xv,\yv')=\emptyset$ if $|\xv\cap \yv'|\ne n+k-2$. Similarly, we can define a pentagon from $\yv'$ to $\xv$. One can see directly from the definition that in the first case, $p$ must have a corner point at $a$, while in the second case $p$ must have a corner point at $b$. See \text{Figure \ref{fig:commutation}} (c) for examples.
	
	In \cite{MR3677933}, the authors used the condition ``having all corners less than $90^\circ$'' to characterize the convexity of a pentagon. This is equivalent to the second condition in our definition.
	
	Define a $\FF[U_1,\ldots,U_{2n+k}]$ module map $P:CFK^-(g)\to CFK^-(g')$ by \[P(\xv)=\sum_{\yv'\in \boldsymbol{S}(g')}\sum_{\substack{p\in \mathrm{Pent}^\circ(\xv,\yv')\\p\cap \XXV=\emptyset}} \prod_{i=1}^{2n+k}U_i^{n_{O_i}(p)}\yv'.\]
	
	Similarly, one defines $P':CFK^-(g')\to CFK^-(g)$. 
	
	To show that $CFK^-(g)$ and $CFK^-(g')$ are quasi-isomorphic, we check that $P$ is a chain homotopy equivalence with $P'$ as its homotopy inverse. The routine check of $P$ and $P'$ being chain maps has been done in \cite{MR3381987} when the background manifold is $S^3$ and in \cite{MR3677933} when the link is regular in lens spaces; we fit their proof into our setup. For any pair of generators $\xv \in\boldsymbol{S}(g)$ and $\zv' \in\boldsymbol{S}(g')$, we consider for each domain $D$ that intersect $\XXV$ emptily from $\xv$ to $\zv'$, how many ways it can be decomposed into an empty rectangle union an empty pentagon. That is, we need to consider $N'(D)$ in \[\partial^- \circ P(\xv)+P\circ\partial^-(\xv)=\sum_{\zv'\in \mathcal{S}(g')} \sum_{\substack{D\in \pi_2(\xv,\zv') \\ D\cap \XXV=\emptyset}} N'(D) \prod_{i=1}^{2n+k} U^{O_i(r)} \zv'.\]
	
	Now, $M=\vert \xv-\xv\cap \zv'\vert$ may be 1, 2, 3 or 4. An argument similar to $(\partial^-)^2=0$ shows that $N'(D)$ will always be even, thus $\partial^- \circ P+P\circ\partial^-=0$. In the same vein, one can also show that $P'$ is a chain map. See \cite[Section 5.1]{MR3381987} for diagrams illustrating this fact.
	
	
	We also introduce a hexagon-counting map $H$ as follows:  
	For $\xv,\yv\in \boldsymbol{S}(g)$, an embedded disk $h$ in the torus with boundary on $\{\beta_t,\alpha_t,\gamma_{t}\}_{t=1}^{n+k}$  is called a \emph{hexagon} from $\xv$ to $\yv$ if 
	
	\begin{itemize}
		\item Four corners of $h$ are in $\xv\cup\yv$, and two other corners are at $a$, $b$ respectively;
		\item Each corner point x of $h$ is an intersection between two curves from $\{\beta_t,\alpha_t,\gamma_{t}\}_{t=1}^{n+k}$. A small disk centered at $x$ is divided into four quadrants by these two curves, and $h$ contains exactly one of them.
		\item $\partial_{\alpha} h=\partial h\cap\AV=\yv-\xv$.
	\end{itemize} 
	
	As for pentagons, we introduce $\mathrm{Hex}(\xv,\yv)$ and $\mathrm{Hex}^\circ(\xv,\yv)$ to denote the set of hexagons and empty hexagons from $\xv$ to $\yv$. We define $H:CFK^-(g)\to CFK^-(g)$ by
	\[H(\xv)= \sum_{\yv\in \boldsymbol{S}(g)} \sum_{\substack{h\in \mathrm{Hex}^\circ(\xv,\yv)\\h\cap \XXV=\emptyset}} \prod_{i=1}^{2n+k} U_{i}^{n_{O_i}(h)} \yv.\]

	This appears naturally as a chain homotopy equivalence between $P'\circ P$ and $\mathrm{Id}_{CFK^-(g)}$, that is \[\partial^- \circ H+H\circ \partial^-=P'\circ P -\mathrm{Id}_{CFK^-(g)}.\] 
	
	This identity is shown by considering decompositions of domains into a rectangle and a hexagon or two pentagons. For any $\xv \in \mathcal{S}(g)$, we have \[\partial^- \circ H (\xv)+H\circ \partial^-(\xv) -P'\circ P(\xv) =\sum_{\zv\in \mathcal{S}(g)} \sum_{D\in \pi_2(\xv,\zv)} N''(D)\prod_{i=1}^{2n+k} U_{i}^{n_{O_i}(D)} \zv,\] where $N''(D)$ is number of decomposition of $D$ into the form $r\ast h$, $p'\ast p$ or $h \ast r'$. 
	
	This is again an argument similar to $(\partial^-)^2=0$, which follows from existing proof in \cite[Section 5.1]{MR3381987} with an extra care in the case  $M=\vert \xv-\xv\cap \zv'\vert=2$. For an intuition, see \cite[Figure 5.9]{MR3381987}. 

	It remains to be seen that the relative bigradings are also link invariants, that is, $P$, $P'$, and $H$ are bigraded and of certain bi-grading, respectively. This is not immediate from the existing argument since our grading convention is a little bit complicated. We remedy this by lifting to the universal cover and then applying the existing result. This idea comes from the identification between combinatorially defined Maslov grading and original Maslov grading in \cite{MR2429242}.
	
	It is well-known that $S^3$ is the universal cover of $L(p,q)$ with $\ZZ/p\ZZ$ as deck transformation group. Denote the covering map by $\pi$. $L$ can be lifted to $\widetilde{L}=\pi^{-1}(L)$, which is a singular link in $S^3$, with possibly more components than $L$. Note that our function $\widetilde{W}$ takes the grid diagram $g$ of $L$ to a grid diagram $\widetilde{g}$ of $\widetilde{L}$. \cite[Figure 8]{MR2429242} gives a good illustration of this fact. 
	
	For any $\xv\in \boldsymbol{S}(g)$, there is a well-defined lift $\widetilde{\xv}\in \boldsymbol{S}(\widetilde{g})$. By definition of $\widetilde{W}$, we have
	\[M(\widetilde{\xv})-M(\widetilde{\yv})=p(M(\xv)-M(\yv)), \]
	whenever $\xv$ and $\yv$ belong to the same $\mathrm{Spin^c}$ structure. 
	
	For $\xv\in \boldsymbol{S}(g)$, there is a unique $\xv'\in \boldsymbol{S}(g')$ sharing $n+k-1$ points with $\xv$. There is a small triangle $t_{\xv}$ connecting them on the combined Heegaard diagram. See \text{Figure \ref{fig:commutation}}. Consider any $\yv'\in\boldsymbol{S}(g')$ and $p\in \mathrm{Pent}^\circ (\xv,\yv')$, which intersects $\XXV$ in an empty set. Lifting all these to $\widetilde{g}$($\widetilde{g'}$), the argument of \cite[Lemma 5.1.3]{MR3381987} shows that 
	\begin{equation}\label{Maslov x, x'}
		M(\widetilde{\xv})-M(\widetilde{\xv}')=-p+ 2\vert \widetilde{t_{\xv}}\cap \widetilde{\OV}\vert-2\vert \widetilde{t_{\xv} }\cap \widetilde{\XXV\XXV}\vert,
	\end{equation}
	\begin{equation}\label{Maslov x', y'}
		M(\widetilde{\xv}')-M(\widetilde{\yv}')=p- 2\vert \widetilde{p\ast t_{\xv} } \cap \widetilde{\OV}\vert+2\vert \widetilde{p\ast t_{\xv} }\cap \widetilde{\XXV\XXV}\vert.
	\end{equation}
	Here we add $\widetilde{ }$ to each set of points to denote its lift to $S^3$.	  
	
	More precisely, $p$ and $t_{\xv}$ each lift to a disjoint union of $p$ identical copies of pentagons/ triangles in the lift of the combined diagram to $S^3$. We can compute directly that
	\[\mathcal{J}(\widetilde{\xv},\widetilde{\xv})-\mathcal{J}(\widetilde{\xv}',\widetilde{\xv}')=0\] 
	\[\mathcal{J}(\widetilde{\OV}-\widetilde{\XXV\XXV},\widetilde{\OV}-\widetilde{\XXV\XXV})-\mathcal{J}(\widetilde{\OV}'-\widetilde{\XXV\XXV}',\widetilde{\OV}'-\widetilde{\XXV\XXV}')=-p,\] 
	while \[\mathcal{J}(\widetilde{\OV}-\widetilde{\XXV\XXV},\widetilde{\xv})-\mathcal{J}(\widetilde{\OV}'-\widetilde{\XXV\XXV},\widetilde{\xv}')=0 \text{ or }-p,\]
	the value in the last equation changes accordingly with  
	$\vert t_{\xv}\cap (\OV\cup -\XXV\XXV)\vert$. From this, \text{Equation \ref{Maslov x, x'}} follows. 
	
	Next, observe that $p\ast t_{\xv}$ is a rectangle from $\xv'$ to $\yv'$, which lifts to a union of $p$ rectangles connecting $\widetilde{\xv}'$ to $\widetilde{\yv}'$. Then \text{Equation \ref{Maslov x', y'}} follows from the definition of $M$.
	
	Now one sees \[M(\widetilde{\xv})-M(\widetilde{\yv}')=- 2\vert \widetilde{p} \cap \widetilde{\OV}\vert,\] since $p\cap \XXV\XXV=\widetilde{p}\cap \widetilde{\XXV\XXV}=\emptyset$.
	
	This, together with the covering formula of $M$ shows that $P$ preserves $M$. A similar argument using $M_{\XXV}$ in place of $M_{\OV}$ shows that $P$ preserves $M_{\XXV}$ also, thus it  preserves $A$. Now we can conclude $P$ is birgraded. With the roles of $g$ and $g'$ exchanged, the argument above shows also $P'$ is bigraded.
	
	Using the same idea, we can argue that $H$ is homogeneous of degree $(1,0)$. Again, we illustrate the idea using Maslov grading. If $h$ is a hexagon in $\mathrm{Hex}^\circ(\xv,\yv)$, then $h \ast B$ is a rectangle from $\xv$ to $\yv$ in which $B$ is one of the bigons with corners at $a$ and $b$ (See \text{Figure \ref{fig:commutation}} (d)). This rectangle lifts to disjoint union of rectangles in $\widetilde{g}$m while the bigon $B$ contains a pair of $O$, $X$ or a triple $\{O,O,XX\}$. Thus, \[M(\widetilde{\xv})-M(\widetilde{\yv})=-p-2\vert (\widetilde{h\ast B}\cap (\widetilde{\OV}-\widetilde{\XXV\XXV}))\vert=p(-1-2\vert h \ast B\cap (\OV-\XXV\XXV)\vert),\] showing that $H$ increases $M$ by 1.
	
	Now we have verified the grading problem, so we can conclude that when $g$ and $g'$ are grid diagrams that differ by a commutation, $CFK^-(g) \simeq CFK^-(g')$ are quasi-isomorphic relatively bigraded chain complexes  while $HFK^-(g) \cong HFK^-(g')$ are isomorphic relatively bigraded modules over $\FF[U_1,\ldots,U_{2n+m}]$. Thus, we can conclude commutation invariance of our homology theories. 
	
	In fact, our proof of the grading shift property can be used to show the following lemma. 
	\begin{lem}\label{relative grading through funtion n} (\cite[Proposition 2.11]{Celoria_2023}, see also \cite[Proposition 4.4]{tripp2021gridhomologylensspace})
		If $\xv,\yv \in \mathcal{S}$ can be connected by $r\in \mathrm{Rect}^\circ (\xv,\yv)$, then we have \[M(\xv,\yv)=1+2\sum_{i=1}^n{n_{XX_i}}(r)-2\sum_{i=1}^{2n+k} n_{O_i}(r),\]
		\[A(\xv,\yv)=2\sum_{i=1}^n{n_{XX_i}}(r)+\sum_{i=1}^k {n_{X_i}}(r)-\sum_{i=1}^{2n+k} n_{O_i}(r).\]
		
	\end{lem}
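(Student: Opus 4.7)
The plan is to reduce to the known $S^3$ formula (appearing as Lemma 4.6.5 / Proposition 4.3.1 of \cite{MR3381987} for singular links) via the universal cover $\pi:S^3\to L(p,q)$. As recalled just before the lemma, $L$ lifts to a singular link $\widetilde{L}=\pi^{-1}(L)\subset S^3$, the grid diagram $g$ lifts to a grid diagram $\widetilde{g}$ of $\widetilde{L}$ (this is exactly what the coordinate function $\widetilde{W}=C_{p,q}\circ W$ realizes, cf. Figure 8 of \cite{MR2429242}), and for any generator $\xv\in\boldsymbol{S}(g)$ one has a well-defined lift $\widetilde{\xv}\in\boldsymbol{S}(\widetilde{g})$ together with the fundamental relation
\[
M(\widetilde{\xv})-M(\widetilde{\yv})=p\bigl(M(\xv)-M(\yv)\bigr),\qquad M_{\XXV}(\widetilde{\xv})-M_{\XXV}(\widetilde{\yv})=p\bigl(M_{\XXV}(\xv)-M_{\XXV}(\yv)\bigr),
\]
valid whenever $\xv$ and $\yv$ lie in the same $\mathrm{Spin}^c$-structure.

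Next I would lift the rectangle itself. Given $r\in\mathrm{Rect}^\circ(\xv,\yv)$, its preimage $\widetilde{r}:=\pi^{-1}(r)$ on $\widetilde{g}$ is a disjoint union of $p$ empty rectangles $\widetilde{r}_1,\ldots,\widetilde{r}_p$, permuted simply transitively by the $\ZZ/p\ZZ$ deck action. Each O-, X- or XX-mark inside $r$ lifts to exactly $p$ marks in $\widetilde{g}$ (one inside each $\widetilde{r}_j$), so
\[
n_{\widetilde{\OV}}(\widetilde{r})=p\cdot\sum_{i=1}^{2n+k}n_{O_i}(r),\qquad n_{\widetilde{\XXV\XXV}}(\widetilde{r})=p\cdot\sum_{i=1}^n n_{XX_i}(r),\qquad n_{\widetilde{\XXV}_r}(\widetilde{r})=p\cdot\sum_{i=1}^k n_{X_i}(r).
\]
I would then choose a sequence $\widetilde{\xv}=\widetilde{\xv}_0,\widetilde{\xv}_1,\ldots,\widetilde{\xv}_p=\widetilde{\yv}$ of generators in $\boldsymbol{S}(\widetilde{g})$ with $\widetilde{\xv}_{j-1}$ and $\widetilde{\xv}_j$ connected by the single empty rectangle $\widetilde{r}_j$; such a sequence exists because applying one $\widetilde{r}_j$ only swaps the two coordinates of the current generator lying on $\partial\widetilde{r}_j\cap\widetilde{\AV}$, producing a legitimate intersection point of $\mathbb{T}_{\widetilde{\alpha}}\cap\mathbb{T}_{\widetilde{\beta}}$. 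Applying the $S^3$ rectangle formula of \cite{MR3381987} (and \cite{MR2529302} for the XX contributions) to each step and summing gives
\[
M(\widetilde{\xv})-M(\widetilde{\yv})=\sum_{j=1}^{p}\!\Bigl(1-2n_{\widetilde{\OV}}(\widetilde{r}_j)+2n_{\widetilde{\XXV\XXV}}(\widetilde{r}_j)\Bigr)=p-2n_{\widetilde{\OV}}(\widetilde{r})+2n_{\widetilde{\XXV\XXV}}(\widetilde{r}).
\]
Dividing by $p$ and invoking the covering relation above yields the desired Maslov formula. The $M_{\XXV}$ analogue goes through identically, and combining them via $A=\tfrac{1}{2}(M_{\OV}-M_{\XXV})$ gives the Alexander formula.

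The main (and essentially only) obstacle is justifying the step that $\widetilde{r}$ really is a disjoint union of $p$ empty rectangles in $\widetilde{g}$ and that the sequence of intermediate generators $\widetilde{\xv}_j$ makes sense; this is a direct check from the explicit formula for $C_{p,q}$, the fact that the deck action acts freely on lifts of cells, and the observation that the $\widetilde{r}_j$ remain empty because $r$ was empty. Once this is set up, the calculation is precisely the same one the author already ran in Subsection~\ref{Commutation invariance} to derive Equations~\ref{Maslov x, x'} and~\ref{Maslov x', y'}, only now applied to a single empty rectangle instead of a pentagon-plus-triangle composite, so no further combinatorial input is required.
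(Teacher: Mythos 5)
Your proposal is correct and follows essentially the same route the paper takes: the text immediately preceding the lemma explicitly says the lifting argument from the commutation-invariance subsection (lift $g$ to $\widetilde{g}$ in $S^3$ via $\widetilde{W}$, note that a rectangle lifts to $p$ disjoint copies each carrying the same multiplicities of marks, apply the known $S^3$ rectangle formula, and divide by $p$ using the covering relation for $M$) is the proof, and the paper otherwise defers to Proposition 2.11 of Celoria and Proposition 4.4 of Tripp. Your bookkeeping of the lifted multiplicities and the intermediate generators $\widetilde{\xv}_j$ is the same computation the paper runs for Equations (5.7)--(5.8), so no gap remains.
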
  
	With the help of this, we can avoid detailed computation, as discussed above, in the following sections.

	\subsubsection{Stabilization invariance}\label{Stabilization invariance}
 	\cite[Corollary 3.2.3]{MR3381987} can be reinterpreted in our terminology as follows: \begin{lem}
		Any stabilization can be realized as a composition of a stabilization of type $XX:SW$ or $X:SW$ with a sequence of commutations.
	\end{lem}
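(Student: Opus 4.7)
The plan is to generalize the proof of Corollary 3.2.3 in \cite{MR3381987} to the singular setting by splitting according to whether the chosen marked cell carries a regular mark (X or O) or a singular mark (XX).

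For a stabilization at a regular cell, the MOST argument applies essentially unchanged. Starting from an $X{:}SW$ stabilization, one obtains each of the three remaining $X$-types ($SE$, $NW$, $NE$) by commuting the newly created thin row or thin column past its neighbor inside the stabilization region. The $O$-type stabilizations are handled similarly: an $O{:}SW$ stabilization is converted to an $X{:}SW$ stabilization by a commutation that swaps the roles of the new $O$ and new $X$ in the local $2\times 2$ region, after which the previous step yields all four $O$-directions. The commutation availability condition is automatic here, because inside the stabilization region the segments $LS_1$ and $LS_2$ required by the definition of commutation can always be chosen as short slanted arcs projecting onto a single $\beta$-curve.

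For a stabilization at an XX cell, we need to reach each of the sixteen configurations from $XX{:}SW$. I would parametrize these by: (i) the quadrant of the new $2\times 2$ block in which the new XX is placed, (ii) the distribution of the four ``old'' $O$-marks (previously lying in the erased singular row and singular column) among the two new thin rows and two new thin columns, and (iii) the position of the new $X$, which is constrained by the grid diagram axioms once (i) and (ii) are fixed. A commutation of the two new thin rows (resp.\ columns) interchanges the configurations sharing a common XX-quadrant, while a commutation of a new thin row (resp.\ column) with the adjacent original row (resp.\ column) of $g$ moves the XX between quadrants. Composing such moves one reaches all fifteen non-$SW$ configurations from $XX{:}SW$.

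The main obstacle is the bookkeeping across the sixteen XX cases and the verification that each commutation used is legal. This reduces to a finite local check: at every step, the pair of rows or columns being commuted is separated by short slanted segments forming a valid $(LS_1,LS_2)$ pair, by inspection of the stabilization region. The argument is thus structurally the same as the classical one, with the additional XX degrees of freedom accounted for by extra commutations confined to the newly introduced thin rows and columns. I would present the XX case by first fixing the quadrant of the new XX (reducing to four orbits under the ``internal'' commutations among the new thin rows and columns) and then showing that a single ``external'' commutation moves between adjacent orbits, so that a spanning tree of four external commutations suffices.
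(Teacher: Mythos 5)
You should first be aware that the paper itself gives no proof of this lemma: it is stated as a reinterpretation of Corollary 3.2.3 of \cite{MR3381987}, so you are supplying details the paper omits. Your regular ($X$/$O$) case is the classical argument and is fine in outline, except that converting an $O$-stabilization into an $X$-stabilization is not a single local swap inside the $2\times 2$ block: the two stabilizations take place at different marked cells (they split different columns), and one must commute the newly created thin row or column across the grid to pass from one to the other.

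The $XX$ case contains a genuine gap. A commutation never changes which markings lie in which row or column; it only permutes the cyclic order of the rows (or columns), so two diagrams related by commutations have identical multisets of row patterns and of column patterns. Among the sixteen admissible fillings at an $XX$, the four ``quadrant'' choices do correspond to reordering the two new rows and the two new columns, and these are related by commuting the two new rows with each other (note this moves the $XX$ between the north and south halves of the block, the opposite of what you assert; commuting a new thin row past an adjacent \emph{original} row of $g$ leaves the family of stabilizations at that cell altogether). But the remaining degrees of freedom record which new row/column carries which of the displaced $O$'s of the flock --- for instance, whether the $O$ in row $r_1$ of the old singular column is restored in the new singular column or in the new regular one. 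Two fillings differing in this datum have different column patterns (a singular column containing an $O$ in row $r_1$ versus one containing an $O$ in row $r_2$), hence cannot be connected by any sequence of commutations, internal or external; your spanning-tree scheme therefore cannot reach all sixteen configurations from a single $XX{:}SW$ diagram, and those with a nonstandard distribution must be accounted for differently (e.g.\ by allowing the initial stabilization to vary within the $SW$ type, or by routing through a stabilization at a regular mark of the flock). Separately, the claim that legality of the commutations is automatic is false: condition (2) of the commutation definition forces $LS_1\cup LS_2$ to project onto an entire $\beta$-curve, and the new singular row/column carries $O$'s in the original columns/rows of the flock, which may lie far from the stabilization region, so the non-interleaving condition must actually be checked (it does hold precisely when one of the two new rows/columns has all of its markings inside the $2\times 2$ block, which is the case genuinely needed for the quadrant moves).
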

	
	With the help of this lemma, it suffices for us to show that when diagrams $g$ and $g'$ can be related by one such stabilization, $CFK^-(g)$ and $CFK^-(g')$ are quasi-isomorphic as chain complexes of $\FF[U_{1},\ldots,U_{m+2n}]$ modules.
	
	In \text{Figure \ref{fig:stabilization}}, we show a pair of grid diagrams $g$ and $g'$ which differ by a stabilization of type $XX:SW$. For further reference, we call the newly introduced marked points $O_{\text{new}}$ and $X_{\text{new}}$. When we perform an $X:SW$ stabilization, we cannot tell which $X$ near $O_{\text{new}}$ is the new one, we choose the one sharing the same column with $O_{\text{new}}$ to be $X_{\text{new}}$. We will need some algebraic terminology in our argument. For detail on this, one can refer to \cite[Section 5.2.2]{MR3381987}. 
	
	\begin{figure}
		\centering
		\includegraphics[width=0.6\linewidth]{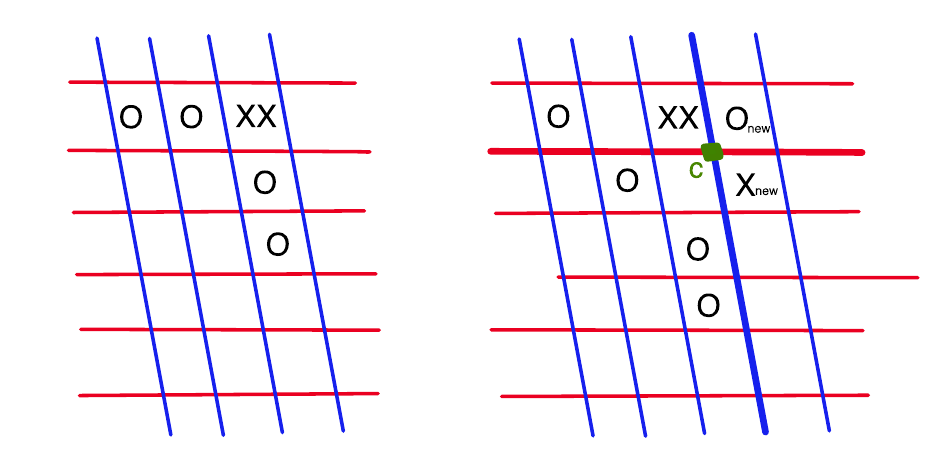}
		\caption{\textbf{Stabilization.} On the left, we show part of a grid diagram; on the right, we show the result after a $XX:SW$ stabilization. The newly introduced curves are bolder than the old ones, and the special intersection point $c$ is marked in green.}
		\label{fig:stabilization}
	\end{figure}

	The intersection point $c$ of the newly introduced $\beta$ and $\alpha$ curve, as shown in \text{Figure \ref{fig:stabilization}}, will lie at the heart of our argument. Among $p$ intersection points, we specify $c$ to be the corner shared by the new $X$ mark and the $XX$ (or $X$) at which we perform stabilization at.
	
	To achieve our goal, we first deal with the base ring change, as there is a new variable $U_{\text{new}}$ in the base ring of $CFK^-(g')$ accounting for $O_{\text{new}}$ .
	
	\begin{prop}\label{stabilization: cone isom to g'}
		Suppose that $g$ and $g'$ are given as above, of size $n+k$ and $n+k+1$, respectively. Take $U_j$ to be the variable corresponding to any $O_j$ originally in $g$ and lying in the same thin edge or $S^1$ component as $O_{\text{new}}$. Then there is a quasi-isomorphism of bigraded chain complexes over $\FF[U_{\text{new}},U_1,\ldots,U_{2n+k}]$ from $CFK^-(g')$ to $\mathrm{Cone}(U_{\text{new}}-U_j)$, where $\mathrm{Cone} (U_{\text{new}}-U_j)$ is the mapping cone of multiplication by $U_{\text{new}}-U_j$ on $CFK^-(g)[U_{\text{new}}]$. 
	\end{prop}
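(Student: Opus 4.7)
The plan is to follow the mapping cone strategy standard in grid homology proofs of stabilization invariance (Section 5.2 of \cite{MR3381987} and its generalization for spatial graphs in \cite{MR3677933}), adapted to lens space Heegaard diagrams (where each pair of $\alpha$ and $\beta$ curves intersects in $p$ points) and to the extra XX marks produced by an $XX{:}SW$ stabilization. First, I split $\mathcal{S}(g')$ as $\BI \sqcup \BN$ according to whether a generator contains the distinguished intersection point $c$. Forgetting $c$ together with the new $\alpha$- and $\beta$-circles yields a natural bijection $\BI \leftrightarrow \mathcal{S}(g)$, which identifies the $\FF[U_{\text{new}}, U_1, \ldots, U_{2n+k}]$-submodule generated by $\BI$ with $CFK^-(g) \otimes \FF[U_{\text{new}}]$.

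Next, I analyze how $\partial^-_{g'}$ interacts with this splitting. A direct inspection near the stabilization region shows that any empty rectangle in $g'$ avoiding $\XXV$ either keeps $c$ as a fixed corner or moves away from $c$, but never carries a generator from $\BN$ into $\BI$. Consequently $CFK^-(g')$ acquires the structure of a mapping cone from $(\BI, \partial_{\BI \to \BI})$ to $(\BN, \partial_{\BN \to \BN})$. Under the identification $\BI \cong CFK^-(g) \otimes \FF[U_{\text{new}}]$, the empty rectangles contributing to $\partial_{\BI \to \BI}$ are in bijection with the empty rectangles in $g$ with matching $O$-multiplicities (the new mark $O_{\text{new}}$ is never crossed, since it sits against $c$), so $(\BI, \partial_{\BI \to \BI}) \cong CFK^-(g) \otimes \FF[U_{\text{new}}]$ as bigraded chain complexes.

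The main task is then to exhibit $(\BN, \partial_{\BN \to \BN})$ as chain homotopy equivalent to $CFK^-(g) \otimes \FF[U_{\text{new}}]$ (with an appropriate grading shift) via explicit destabilization-type maps that count ``$L$-shaped'' regions obtained by concatenating a rectangle with the thin strip at $c$, together with an auxiliary homotopy counting longer wrap-around regions (which in the lens space case may traverse the Heegaard torus up to $p$ times before closing). Once this is done, composing the edge map $\BI \to \BN$ with the homotopy equivalence yields an endomorphism of $CFK^-(g) \otimes \FF[U_{\text{new}}]$ whose only nonzero contribution comes from the two thin annuli through $c$---one through $O_{\text{new}}$ and one through $O_j$---so it equals multiplication by $U_{\text{new}} - U_j$. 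The hardest step will be constructing and verifying this homotopy equivalence: the domain-decomposition bookkeeping from \text{Remark \ref{rmk on N(D)}} now has to be applied to juxtapositions of rectangles with $L$-shaped regions while keeping every piece disjoint from $\XXV$, and the hypothesis that $O_j$ shares a thin edge or $S^1$ component with $O_{\text{new}}$ is precisely what ensures that both relevant thin annuli miss every XX mark, so that the edge-map contribution reduces cleanly to $U_{\text{new}} - U_j$. Grading behavior of each map is checked by the lift-to-$S^3$ technique already used in Subsection \ref{Commutation invariance}.
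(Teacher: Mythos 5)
Your overall strategy matches the paper's: split $\mathcal{S}(g')$ into $\BI\sqcup\BN$ by membership of the distinguished point $c$, identify $\BI^-$ with $CFK^-(g)[U_{\text{new}}]$ (up to a $[1,1]$ shift) via the closest-point bijection, observe that $\BN^-$ is a subcomplex so that $CFK^-(g')$ is the cone of $\partial^{\BN}_{\BI}\colon\BI^-\to\BN^-$, and then trade $\BN^-$ for another copy of $CFK^-(g)[U_{\text{new}}]$ so that the edge map becomes multiplication by $U_{\text{new}}-U_j$. Where you diverge is in the step you yourself flag as hardest: you propose to build the homotopy equivalence $\BN^-\simeq CFK^-(g)[U_{\text{new}}]$ by counting L-shaped and wrap-around domains in the destabilized diagram, which is the considerably heavier bookkeeping of filtered-type destabilization maps and which, as you note, is not covered by \text{Remark \ref{rmk on N(D)}}. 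The paper instead never leaves $g'$: it defines three rectangle-counting maps $H_{X_{\text{new}}}\colon\BN^-\to\BI^-$, $H_{O_{\text{new}}}\colon\BI^-\to\BN^-$ and $H_{X_{\text{new}},O_{\text{new}}}\colon\BN^-\to\BN^-$ (counting empty rectangles through $X_{\text{new}}$, through $O_{\text{new}}$, and through both), proves $H_{X_{\text{new}}}\circ H_{O_{\text{new}}}=\mathrm{Id}_{\BI^-}$ and $H_{O_{\text{new}}}\circ H_{X_{\text{new}}}+\partial^{\BN}_{\BN}\circ H_{X_{\text{new}},O_{\text{new}}}+H_{X_{\text{new}},O_{\text{new}}}\circ\partial^{\BN}_{\BN}=\mathrm{Id}_{\BN^-}$ directly from the $N(D)\in\{0,2\}$ analysis (only the thin annuli through $X_{\text{new}}$ and $O_{\text{new}}$ survive), and then takes $e\circ H_{X_{\text{new}}}$ as the right-hand vertical map of the commutative square; every map stays a rectangle count, so the lens-space decomposition lemma applies verbatim and no new domain analysis is required. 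You should restructure your key step this way rather than attempting the L-shaped-domain route. One smaller correction: the two thin annuli surviving in $H_{X_{\text{new}}}\circ\partial^{\BN}_{\BI}$ contribute $U_{\text{new}}$ and the variable of whichever $O$ lies in the row of $X_{\text{new}}$; the hypothesis that $O_j$ lies on the same thin edge or $S^1$ component is not what makes these annuli avoid the XX marks (that is forced by the geometry of the stabilization region), but rather what allows \text{Proposition \ref{homotopic U_i action}} to replace that specific variable by any such $U_j$ without changing the quasi-isomorphism type of the cone.
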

	
	This proposition together with an algebraic lemma will complete our proof.
	\begin{lem}
		Let $C$ be a bigraded chain complex over $\FF[U_1,\ldots,U_{2n+k}]$. Then there is an isomorphism of bigraded $\FF[U_1,\ldots,U_{2n+k}]$-modules \[H(\mathrm{Cone}(U_{\text{new}}-U_{j}:C[U_{\text{new}}]\to C[U_{\text{new}}]))\cong H(C).\]
	\end{lem}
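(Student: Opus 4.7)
The strategy is to exploit the standard long exact sequence associated to a mapping cone. Recall that for any chain map $f\colon X\to Y$ there is a short exact sequence of chain complexes
\[
0\to Y\to \mathrm{Cone}(f)\to X[1]\to 0,
\]
whose induced long exact sequence in homology has connecting homomorphism $f_{*}$. Applying this with $X=Y=C[U_{\text{new}}]$ and $f$ equal to multiplication by $U_{\text{new}}-U_{j}$, I obtain
\[
\cdots\to H(C[U_{\text{new}}])\xrightarrow{U_{\text{new}}-U_{j}} H(C[U_{\text{new}}])\to H(\mathrm{Cone}(U_{\text{new}}-U_{j}))\to H(C[U_{\text{new}}])[\mathrm{shift}]\to\cdots
\]
Because $U_{\text{new}}$ commutes with the differential of $C$ and acts freely (as a polynomial variable), there is a canonical identification $H(C[U_{\text{new}}])\cong H(C)[U_{\text{new}}]$ of bigraded $\FF[U_{1},\ldots,U_{2n+k}][U_{\text{new}}]$-modules.

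Next I observe that $U_{\text{new}}-U_{j}$ is a monic polynomial in $U_{\text{new}}$ over the commutative ring $\FF[U_{1},\ldots,U_{2n+k}]$, hence a non-zero-divisor on the free polynomial module $H(C)[U_{\text{new}}]$. Thus multiplication by $U_{\text{new}}-U_{j}$ is injective on homology, and the long exact sequence breaks into short exact sequences
\[
0\to H(C)[U_{\text{new}}]\xrightarrow{U_{\text{new}}-U_{j}} H(C)[U_{\text{new}}]\to H(\mathrm{Cone}(U_{\text{new}}-U_{j}))\to 0.
\]
Consequently $H(\mathrm{Cone}(U_{\text{new}}-U_{j}))\cong H(C)[U_{\text{new}}]/(U_{\text{new}}-U_{j})$, and the evaluation homomorphism $U_{\text{new}}\mapsto U_{j}$ identifies the right-hand side with $H(C)$.

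To obtain an \emph{explicit} realization of the isomorphism (which will be convenient when one later traces through the quasi-isomorphism of Proposition \ref{stabilization: cone isom to g'}), I would package the above as a concrete chain map. Writing the cone as $C[U_{\text{new}}]\oplus C[U_{\text{new}}][\mathrm{shift}]$ with differential $(x,y)\mapsto (-\partial x,(U_{\text{new}}-U_{j})x+\partial y)$, the evaluation $\phi(x,y)=y|_{U_{\text{new}}=U_{j}}$ is easily checked to be a chain map of $\FF[U_{1},\ldots,U_{2n+k}]$-modules (the $x$-component contribution vanishes because $U_{\text{new}}-U_{j}$ becomes zero after evaluation), and the argument above shows $\phi$ is a quasi-isomorphism.

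The main point requiring care is the bigrading bookkeeping: one must verify that the cone's grading shift precisely cancels the bidegree $(-2,-1)$ of $U_{\text{new}}-U_{j}$, so that the connecting map in the long exact sequence is homogeneous, and that the evaluation $U_{\text{new}}\mapsto U_{j}$ is bigraded (which it is, since $U_{\text{new}}$ and $U_{j}$ carry the same bidegree $(-2,-1)$ by the extension of the grading to the polynomial ring). Everything else is a formal consequence of the fact that a monic polynomial in a central variable is a non-zero-divisor, together with the mapping cone long exact sequence.
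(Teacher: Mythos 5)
Your proof is correct, and it is essentially the standard argument underlying the result the paper invokes: the paper does not prove this lemma itself but defers to Lemma 5.2.16 of \cite{MR3381987}, whose proof is exactly the observation that $U_{\text{new}}-U_j$ is a non-zero-divisor on $C[U_{\text{new}}]$, so the cone is quasi-isomorphic to the cokernel $C[U_{\text{new}}]/(U_{\text{new}}-U_j)\cong C$ via evaluation $U_{\text{new}}\mapsto U_j$. Your version runs the same idea through the mapping-cone long exact sequence on homology (plus the explicit chain-level map $\phi$ at the end, which recovers the cited proof), so there is no substantive difference.
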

	Here $U_j$ is chosen as in \text{Proposition \ref{stabilization: cone isom to g'}}. This lemma follows from \cite[Lemma 5.2.16]{MR3381987} with only some notation changes.
	
	It follows from \text{Proposition \ref{homotopic U_i action}} that the choice of $U_j$ here is irrelevant to the quasi-isomorphism result.
	
	Using the special intersection point $c$, we split $\boldsymbol{S}(g')$ into $\BI\cup \BN$, where $\BI$ is the set of generators containing $c$ and $\BN$ is its complement. Let $\BI^-$ and $\BN^-$ denote the submodule of $CFK^-(g')$ generated by $\BI$ and $\BN$, respectively. Since we block all $X$-marks in our theory, $\BN^-$ is actually a subcomplex. Now the differential on $CFK^-(g')$ can be written in matrix form\[\partial^-=\begin{pmatrix}
		\partial^{\BI}_{\BI} & 0\\
		\partial^{\BN}_{\BI} & \partial^{\BN}_{\BN}
	\end{pmatrix},\]
	so $CFK^-(g')$ is the mapping cone of $\partial^{\BN}_{\BI}:(\BI^-,\partial^{\BI}_{\BI})\to (\BN^-,\partial^{\BN}_{\BN}) $.
	
	Note that there is a natural bijection $c:\boldsymbol{S}(g)\to \BI$, $\xv\mapsto \xv'=\xv\cup \{c\}$. This leads to an isomorphism of chain complexes $e:(\BI^-,\partial^{\BI}_{\BI})\to CFK^-(g)[U_{\text{new}}]\left[1,1\right]$. Here and below, we use $C[l,k]$ to denote the complex $C$ with bigrading $(M,A)$ shifted up by $(l,k)$. The proof of this fact follows immediately from the  natural bijection on the sets of generators and rectangles. 
	
	Again, we need to deal with the grading problem. On one hand, we can lift the singular link $L$ as well as diagrams $g$ and $g'$ via $\pi$ to $\widetilde{L}$, $\widetilde{g}$ and $\widetilde{g'}$. Using the fixed absolute lift, one can compute directly  $M(\widetilde{\xv'})-M(\widetilde{\xv})=-p$,  $A(\widetilde{\xv'})-A(\widetilde{\xv})=-p$ using the combinatorial formula, which is straightforward in the $S^3$ case. On the other hand, we can appeal to \text{Lemma \ref{relative grading through funtion n}} and see the grading shift in a direct way.
	
	Now, we introduce rectangle-counting maps to relate $(\BI^-,\partial^{\BI}_{\BI})$ and $(\BN^-,\partial^{\BN}_{\BN})$.
	Define \begin{itemize}
		\item $H_{X_{\text{new}}}:\BN^-\to \BI^-$ by 
		\[H_{X_{\text{new}}}(\xv) =\sum_{\yv\in \BI}
		\sum_{\substack{r\in \mathrm{Rect}^\circ(\xv,\yv)\\\mathrm{int}(r)\cap\XXV=X_{\text{new}}}}
		\prod_{i=1}^{2n+k} U_{i}^{n_{O_i}(r)}\cdot U_{\text{new}}^ {n_{O_{\text{new}}}(r)} \yv.\]
		\item $H_{O_{\text{new}}}:\BI^-\to \BN^-$ by 
		\[H_{O_{\text{new}}}(\xv) =\sum_{\yv\in \BN}
		\sum_{\substack{r\in \mathrm{Rect}^\circ(\xv,\yv)\\\mathrm{int}(r)\cap\XXV=\emptyset, O_{\text{new}}\in int (r)}}
		\prod_{i=1}^{2n+k} U_{i}^{n_{O_i}(r)}\yv.\]
		\item $H_{X_{\text{new}},O_{\text{new}}}:\BN^-\to \BN^-$ by 
		\[H_{X_{\text{new}},O_{\text{new}}}(\xv) =\sum_{\yv\in \BN}
		\sum_{\substack{r\in \mathrm{Rect}^\circ(\xv,\yv)\\\mathrm{int}(r)\cap\XXV=X_{\text{new}}, O_{\text{new}}\in int (r)}}
		\prod_{i=1}^{2n+k}U_{i}^{n_{O_i}(r)} \yv.\]
	\end{itemize}
	
	Since all three maps concern generators in the grid diagram $g'$, we can verify directly using the lifted diagram $\widetilde{g'}$ that $H_{X_{\text{new}}}$, $H_{O_{\text{new}}}$ and $H_{X_{\text{new}},O_{\text{new}}}$ are of bi-grading $(-1,-1)$, $(1,1)$ and $(1,0)$, respectively. We can also appeal to \text{Lemma \ref{relative grading through funtion n}} and prove the grading shift formula by counting marked points in rectangles that are taken into account in each map.
	
	It can be shown as in \cite{tripp2021gridhomologylensspace} that \begin{lem}
		$H_{X_{\text{new}}}$ and $H_{O_{\text{new}}}$ are chain maps: \[\partial^{\BI}_{\BI}\circ H_{X_{\text{new}}}= H_{X_{\text{new}}} \circ \partial^{\BN}_{\BN},\]
		\[\partial^{\BN}_{\BN}\circ H_{O_{\text{new}}}= H_{O_{\text{new}}} \circ \partial^{\BI}_{\BI}.\]
	\end{lem}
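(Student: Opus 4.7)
The plan is to prove both chain-map identities by the standard rectangle-decomposition technique: expand each composition into a sum over domains $D \in \pi(\xv,\zv)$ weighted by the number of decompositions $D = r_1 \ast r_2$ into empty rectangles that satisfy the constraints of the two maps being composed, then match decompositions between the two sides of each identity. The main combinatorial input is Remark \ref{rmk on N(D)}, which records that for any fixed domain $D$ with $\xv \ne \zv$ the total number of such decompositions is $0$ or $2$, independently of which marks $D$ carries. Note that the source and target of each composition lie in the disjoint parts $\BI$ and $\BN$ of $\boldsymbol{S}(g')$ distinguished by containment of the special corner $c$, so the diagonal case $\xv = \zv$ never arises.

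For the first identity, each contributing domain $D$ has $n_{\XXV}(D) = 1$ with the unique $\XXV$-mark being $X_{\text{new}}$, so any decomposition $D = r_1 \ast r_2$ places $X_{\text{new}}$ in the interior of exactly one of the two pieces. By the local geometric analysis at the stabilization region, carried out in Section 5.2 of \cite{MR3381987} and adapted to lens spaces in \cite{tripp2021gridhomologylensspace}, the rectangle containing $X_{\text{new}}$ in its interior is the same one that has $c$ as one of its corners; this forces the intermediate generator $\yv$ to lie in $\BI$ when $X_{\text{new}} \in r_1$ (the LHS configuration) and in $\BN$ when $X_{\text{new}} \in r_2$ (the RHS configuration). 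Hence every decomposition contributes to exactly one side, the combined contribution per domain collects $N(D) \in \{0, 2\}$ decompositions, and the identity holds modulo $2$.

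The second identity is proved in the same manner with $O_{\text{new}}$ in place of $X_{\text{new}}$, with one small case split depending on $n_{O_{\text{new}}}(D)$. When $n_{O_{\text{new}}}(D) = 1$ the argument is identical to the first identity and concludes by the same $N(D)$-pairing. When $n_{O_{\text{new}}}(D) = 2$, both $r_1$ and $r_2$ contain $O_{\text{new}}$, every decomposition automatically satisfies the constraints of both sides, and the accompanying $U_{\text{new}}^{1}\prod_{i} U_i^{n_{O_i}(D)}$ weight agrees on both sides, so the two compositions match literally term by term. The main technical obstacle across both identities is the geometric matching that couples the location of $X_{\text{new}}$ or $O_{\text{new}}$ to the $\BI/\BN$-membership of the intermediate generator $\yv$; this matching is a purely local feature of the stabilization region and is therefore unaffected by the presence of thick edges or other singular data elsewhere in the grid, so the arguments of \cite{MR3381987} and \cite{tripp2021gridhomologylensspace} apply to our setting essentially verbatim.
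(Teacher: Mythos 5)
Your overall strategy -- expand both compositions over composite domains, invoke Remark \ref{rmk on N(D)} to get $N(D)\in\{0,2\}$, note that the diagonal case never occurs because source and target lie in different pieces of $\BI\sqcup\BN$, and then show each decomposition contributes to exactly one side -- is exactly the standard juxtaposition argument that the paper defers to \cite{tripp2021gridhomologylensspace} for. However, the geometric fact you use to run the matching is misstated, and as stated it is false. A rectangle containing $X_{\text{new}}$ in its interior need \emph{not} have $c$ as a corner: for example, a one-column-wide rectangle lying in the common column of $X_{\text{new}}$ and $O_{\text{new}}$ and extending past both of their rows contains $X_{\text{new}}$, yet $c$ sits in the interior of one of its $\beta$-edges, so both of its endpoint generators lie in $\BN$ and the rectangle is non-switching. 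Hence ``$X_{\text{new}}\in r_1$ forces $\yv\in\BI$'' does not follow from your premise. The implication that actually drives the proof points the other way: any rectangle from a generator of $\BN$ to a generator of $\BI$ has $c$ as an incoming corner, so near $c$ it occupies one of the two quadrants on the diagonal containing $X_{\text{new}}$ and the mark being stabilized, and therefore contains an X. This is precisely the statement, already recorded in the paper, that $\BN^-$ is a subcomplex. It follows that in any decomposition $D=r_1\ast r_2$ the X-free factor cannot be the one that switches from $\BN$ to $\BI$; the switch must be made by the rectangle carrying $X_{\text{new}}$, which pins down $\yv$ and assigns the decomposition to exactly one side. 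The ``bad'' configurations (a non-switching rectangle containing $X_{\text{new}}$) are excluded not by your claim but because their complementary factor would have to be an X-free switching rectangle, which does not exist.

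The second identity is not proved by an ``identical'' argument but by the mirror-image fact: an X-free rectangle containing $O_{\text{new}}$ is forced, once the two X-occupied quadrants at $c$ are excluded, to have $c$ as an \emph{outgoing} corner sitting in the $O_{\text{new}}$-quadrant, hence to run from $\BI$ to $\BN$. (So here your corner claim happens to be true, but for a different reason and in a different logical direction.) This same fact shows that your case $n_{O_{\text{new}}}(D)=2$ is vacuous: two such rectangles cannot be juxtaposed, since each must start in $\BI$ and end in $\BN$. Your treatment of that case is also internally incoherent -- a single decomposition has one intermediate generator, which lies in exactly one of $\BI$, $\BN$, so it cannot ``satisfy the constraints of both sides.'' These gaps are fillable, but the two forcing statements are the entire content of the lemma, so they need to be stated correctly.
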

	
	As we discussed during the proof of $\partial^2=0$, we consider, for each pair of generators $(\xv,\zv)$ in $\BN$ or $\BI$ and each domain $D$ from $\xv$ to $\zv$, how many times it contributes to $H_{X_{\text{new}}}\circ H_{O_{\text{new}}}$ and $H_{O_{\text{new}}}\circ H_{X_{\text{new}}}+ \partial^{\BN}_{\BN}\circ H_{X_{\text{new}},O_{\text{new}}}+ H_{X_{\text{new}},O_{\text{new}}}\circ \partial^{\BN}_{\BN}$. For $H_{X_{\text{new}}}\circ H_{O_{\text{new}}}$, our life is easy since only thin annuli are involved, which contribute $\xv\mapsto \xv$ for each $\xv\in \BI$. In case of $\BN$, note that all maps in these terms are rectangle counting maps, so \text{Remark \ref{rmk on N(D)}} applies. In each decomposition $D=r_1\ast r_2$, it contributes to one of the four terms depending on how $O_{\text{new}}$ and $X_{\text{new}}$ distribute in the two rectangles. So $N(D)=2$ except in the case that $D$ is the thin annulus through $X_{\text{new}}$ and $O_{\text{new}}$. Such an annulus contributes $\xv \mapsto \xv$  for every $\xv \in \BN$. These together show the identities:
	\[H_{X_{\text{new}}}\circ H_{O_{\text{new}}}=\mathrm{Id}_{\BI^-};\]
	\[H_{O_{\text{new}}}\circ H_{X_{\text{new}}}+ \partial^{\BN}_{\BN}\circ H_{X_{\text{new}},O_{\text{new}}}+ H_{X_{\text{new}},O_{\text{new}}}\circ \partial^{\BN}_{\BN}=\mathrm{Id}_{\BN^-}.\]
	
	Combining the computations above, we have a commutative square 
	\[\xymatrix{(\BI^-,\partial_{\BI}^{\BI}) \ar[d]^{e} \ar[r]^{\partial^{\BN}_{\BI}} &(\BN^-,\partial_{\BN}^{\BN}) \ar[d]^{e\circ H_{X_{\text{new}}}}\\
		(CFK^-(g)[U_{\text{new}}][1,1],\partial^-_{g}) \ar[r]^{U_{\text{new}}-U_j} &(CFK^-(g)[U_{\text{new}}][1,1],\partial^-_{g})\\}. \]
	
	This, together with \cite[Lemma 5.2.12]{MR3381987} (property of mapping cone) leads to \text{Proposition \ref{stabilization: cone isom to g'}}.
	
	Thus, we can conclude that when $g$ and $g'$ are grid diagrams that differ by a single stabilization, we have a quasi-isomorphism of relatively bigraded chain complexes $CFK^-(g)\simeq CFK^-(g')$ and an isomorphism of relatively bigraded $\FF[U_{1},\ldots,U_{2n+m}]$-modules $HFK^-(g)\cong HFK^-(g')$. 
	
	In summary, for a singular link $L$ embedded in $L(p,q)$ and any grid diagram $g$ representing it, $HFK^-(L)=: HFK^-(g)$ and $\widehat{HFK}(L)=:\widehat{HFK}(g)$ are well-defined invariants for the isotopy class of $L$.
	
	\begin{rmk}
		In \cite[Section 5]{tripp2021gridhomologylensspace}, they showed invariance of grid homology for regular lens space links in great detail. Although we did not provide a detailed analysis on decompositions of domains, \text{Remark \ref{rmk on N(D)}} together with their investigation on higher polygons forms a complete proof.
	\end{rmk}

	\section{Resolution cube}\label{Resolution cube}
	Having defined grid homology for singular links in lens spaces, we now try to relate the grid homologies of links that differ by a local change and provide a way to do computations through grid homology of those links that admit a torus diagram without any crossings (defined as ``totally singular'' in \cite{MR2574747}). More concretely, our main tasks in this section are \begin{enumerate}
		\item Proving a skein exact sequence, which acts as the  foundation of a resolution cube;
		\item Constructing a spectral sequence starting from the resolution cube and converging to the grid homology of links in lens spaces.
	\end{enumerate}
	
	For grid homology of regular links in lens spaces, one can regard regular knots and links as singular ones without vertices or one can refer to \cite{MR2429242}. In \cite{MR2429242}, the authors also identified the grid homology for regular lens space links with $HFL^\circ$ defined in \cite{MR2443092}. So, we are actually proving a resolution cube of $HFL^\circ$ specialized to grid diagrams. The construction in this section mostly follows \cite[Section 4, 6]{MR2574747}. Although all the existing resolution cubes used braid diagrams of knots and links, as remarked in \cite[Section 4]{MR2574747}, this construction has no dependence on the property of braids.
	
	\subsection{Resolution of crossings}
	
	For knots in $S^3$, it is natural to consider resolution on a planar diagram. In our case, resolving crossings on a torus projection appears as a natural choice. Our singular links appear with orientation, so we can assign a sign to each crossing. See \text{Figure \ref{fig:resolution of crossings}}. For a diagram with oriented crossings, a usual way to form a resolution cube is to consider oriented resolution and disoriented resolution at each crossing. This was used to construct Khovanov homology. However, we will use singularization in place of disoriented resolution and that's where singular links appear.
	
	\begin{defi}
		At each crossing in a torus diagram for a possibly singular link in lens space $L(p,q)$, we define its \emph{smoothing} and \emph{singularization} as shown in \text{Figure \ref{fig:resolution of crossings}}. For a positive crossing, we call its smoothing the \emph{1-resolution} and its singularization the \emph{0-resolution}. For a negative crossing, the notions of 0 and 1 exchange.
		
	\end{defi}
	
	\begin{figure}
		\begin{overpic}[width=0.7\textwidth]{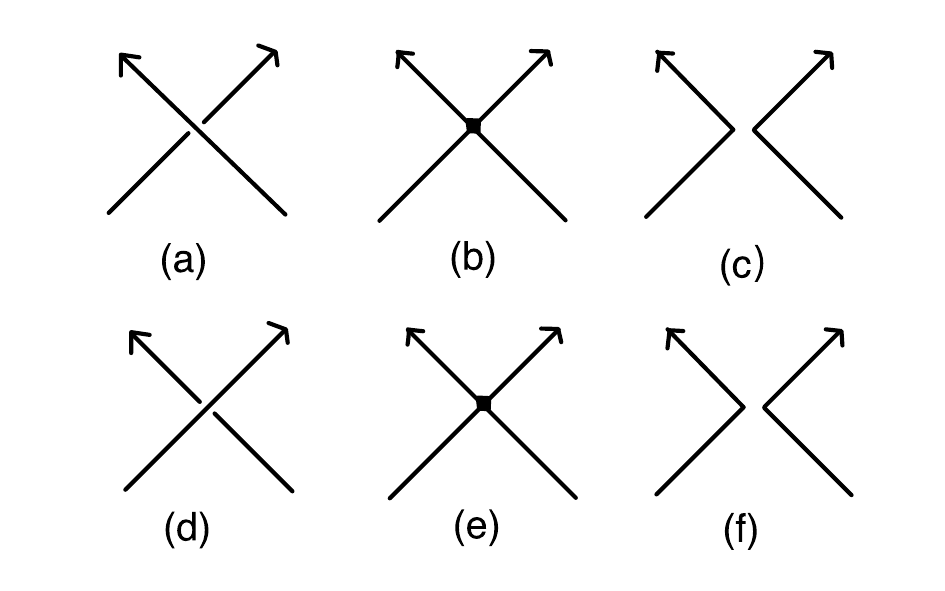}
			
		\end{overpic}
		
		\caption{\textbf{Resolution of crossings.} In (a), we show a negative crossing. In (b) and (c) we show its corresponding singularization and smoothing. Similarly, in the second row, we show a positive crossing together with its singularization and smoothing in (d), (e), and (f), respectively.}
		\label{fig:resolution of crossings}

	\end{figure}

	Note that for each pair of $X$, $O$ marks sharing a row or a column, there are two ways to connect them in the complement of $\alpha$ or $\beta$ curves. Recall that we cut $\Sigma$ open along a pair of generic representatives for a symplectic basis of $H_1(\Sigma;\ZZ)$ to obtain a planar diagram. We fix the convention that on this diagram, those vertical segments emanating from $X$ always go upward diagrammatically. From now on, we mean by $g$ a grid diagram together with a fixed way of connecting base points to obtain the link projection subject to the above convention. Thus, each positive or negative crossing locally looks like \text{Figure \ref{fig:standard picture at each crossing}} (d) or (a). There are certain cells containing crossings in the grid diagram, which will be called \emph{crossing squares}. To make resolving a crossing available on a grid diagram, we need the assumption of special on diagram in the following sense:
	
	\begin{defi}\label{special grid diagram}(\cite[Definition 6.1]{MR2574747})
		A grid diagram is called \emph{special} if \begin{enumerate}
			\item each vertical arc crosses at most one horizontal arc, each horizontal arc goes under at most one vertical arc, i.e each column or row has at most one crossing square;
			\item no two crossing squares share a corner;
			\item each crossing square shares two sides with squares marked by $X$; the corner shared by the two $X$-labeled cells will be called the two $X$s' \emph{crossing corner};
			
			\item when a rectangle has two of its corners as crossing corners, it contains an $X$ in the interior.
		\end{enumerate}
	\end{defi}
	
	Using grid moves, one can show that every link in a lens space possesses a special diagram. Indeed, the first two requirements can be achieved by sufficiently many stabilization moves. The third one can be achieved by stabilization and commutations. In \text{Figure \ref{fig:standard picture at each crossing}}, we define the \emph{standard picture at crossings} that meets the need of (3) and show how to achieve it for each positive or negative crossing. Our resolution cube will be constructed based on grid diagrams that are special and standard at each crossing. The fourth one is guaranteed once we have (1) and the standard picture of (3) in hand.
	
	\begin{figure}
		
		\begin{overpic}[width=0.8\textwidth]{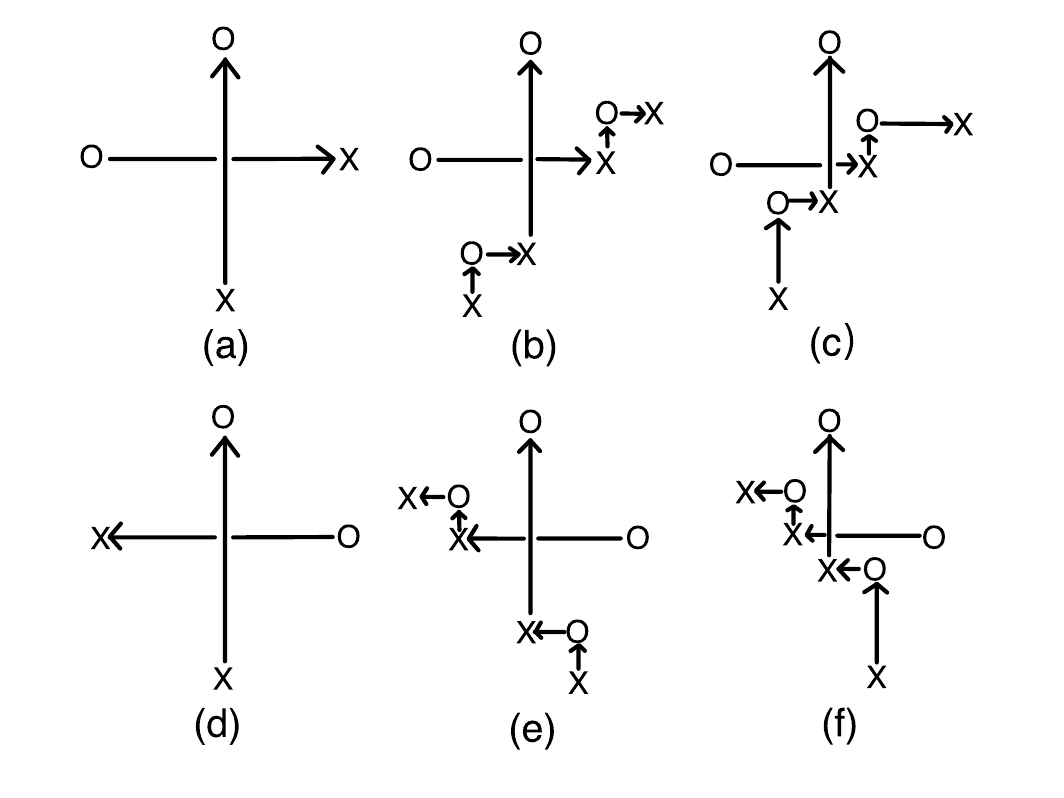}
			
		\end{overpic}
		
		\caption{\textbf{Standard picture at each crossing.} In (a) and (d), we show how a negative and positive crossing look like after fixing our convention. In (b) and (c), we show how to achieve (3) in the definition of a special grid diagram  starting from (a) via a sequence of stabilization and commutation. Similarly, we do so for (d) in (e) and (f). The patterns in (c) and (f) are defined to be the standard picture for a negative or positive crossing respectively.}
		\label{fig:standard picture at each crossing}
		
	\end{figure}

	Using the standard picture, we can resolve a crossing on a diagram. That is, we can change a diagram locally to get one for the singularized or smoothed link at each crossing. See \text{Figure \ref{fig:grid diagram of resolution and singularization}}.
	\begin{figure}
		\begin{overpic}[width=0.85\textwidth]{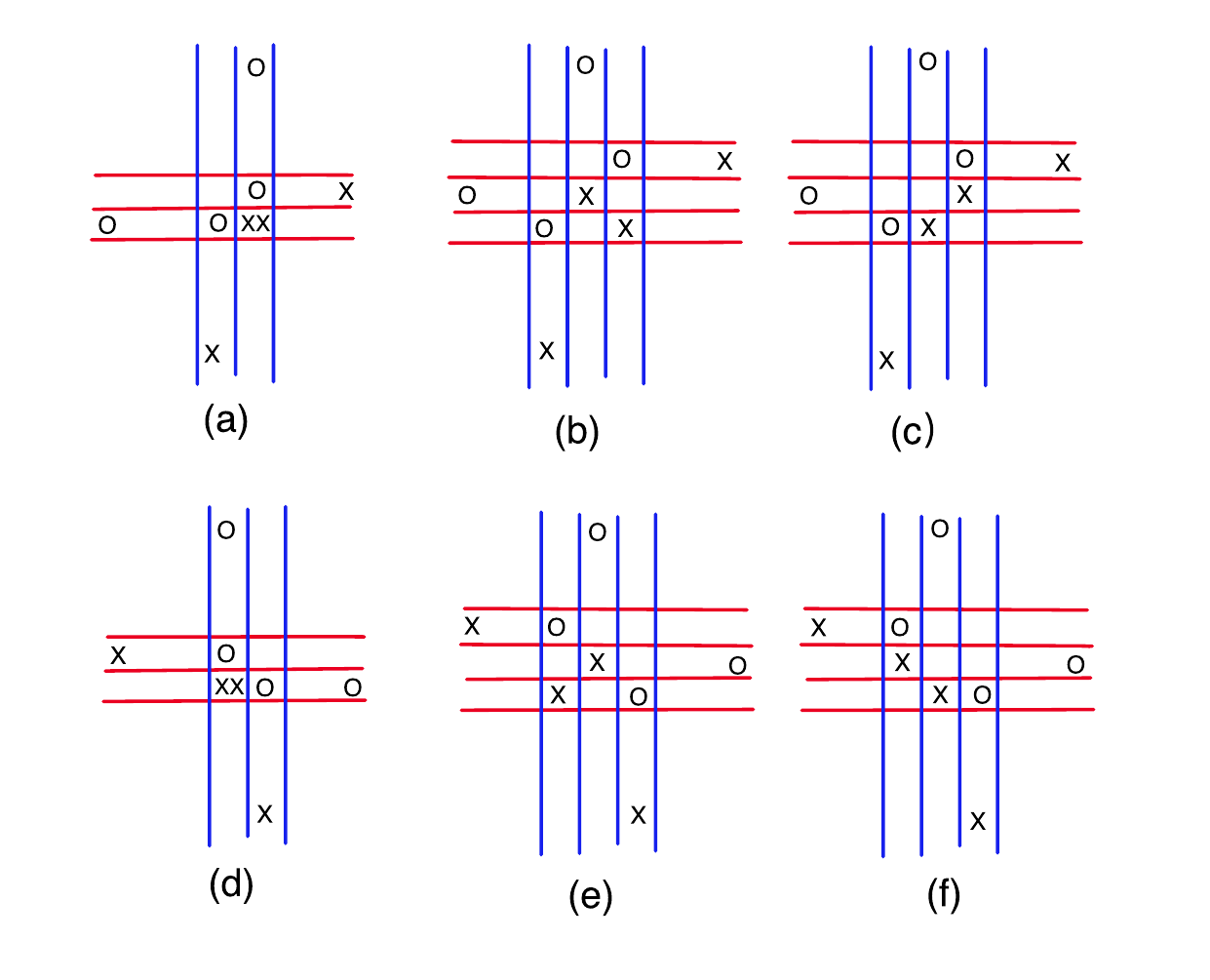}

		\end{overpic}
		
		\caption{\textbf{Realizing resolution on grid diagram.} (c) and (f) here are the same as in \text{Figure \ref{fig:standard picture at each crossing}}. In (b)(resp. (e)), we show a grid diagram for the smoothing of the crossing in (c) (resp. (f)). In (a)(resp. (d)), we merge the two rows as well as the two columns containing crossing $X$s into one singular row and one singular column. This gives rise to a grid diagram for the singular knot or link after singularizing at this crossing. }
		\label{fig:grid diagram of resolution and singularization}

	\end{figure}

	From now on, we fix a (possibly) singular link $L$ in $L(p,q)$ and choose a grid diagram $g_0$ of it. We apply the procedure above to get a special grid diagram $g$. Now, take any of its crossings $c$. As we did when proving commutation invariance, we can form a combined diagram that contains the information of the original link $L$ and the resolved link $L_r$. With a little effort, the singularized link $L_s$ can also be reconstructed from this diagram. See \text{Figure \ref{fig:AB diagram}}.

	For further reference, we fix some notation here. For a grid diagram $g$, we denote by \begin{itemize}
		\item $X(-)$, the set of singular points;
		\item $C(-)$, the set of crossings.
	\end{itemize} 
	Note that $X(g)$ depends only on the underlying oriented graph, while $C(g)$ depends on the diagram $g$ itself.

	\begin{figure}
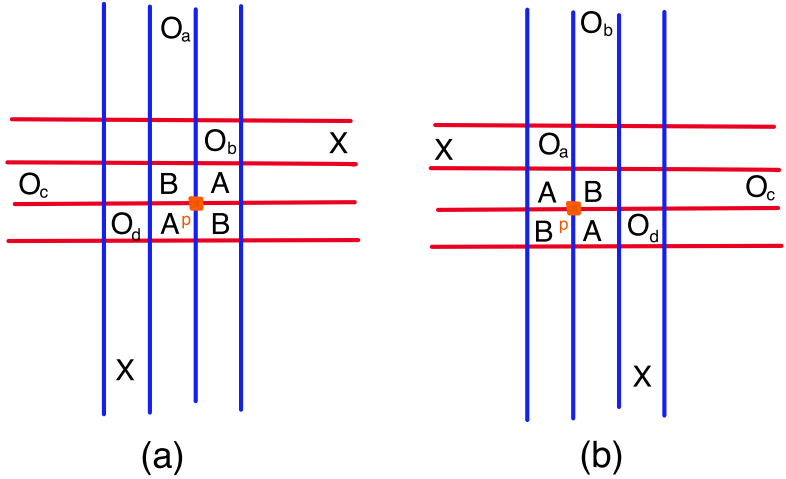

		\begin{overpic}[width=0.85\textwidth]{combineddiagram.pdf}

		\end{overpic}
		
		\caption{\textbf{Combined diagram for resolution of crossing.} (a) is a combined diagram for $L_r$ and $L$ when $c$ is a negative crossing. When placing $X$ marks at $B$s, we get the grid diagram $g_r$, while placing $X$ marks at $A$s, we get $g$. By compressing two rows and columns containing $A$, $B$ into single ones, we get $g_s$. (b) shows one when $c$ is positive. In both cases, $O$-base points are marked as in the description of \text{Theorem \ref{skein exact sequence}}, and $p$ is the special intersection point we shall use in the proof.}
		\label{fig:AB diagram}

	\end{figure}

	\begin{defi}
		For any grid diagram $g$, a \emph{complete resolution} is an assignment $I:C(g)\to \{0,1\}$ that assigns to each crossing its 0 or 1 resolution. When $g$ is a special grid diagram with the standard local picture as in \text{Figure \ref{fig:standard picture at each crossing}} at each crossing, we can apply the procedure in \text{Figure \ref{fig:grid diagram of resolution and singularization}} to get a canonical diagram for the resolved link. This will be denoted by $g_I$ for each complete resolution.
	\end{defi}
	\subsection{Skein exact sequence}\label{Skein exact sequence}
	
	To form a resolution cube, we first prove the existence of a skein exact sequence.
	
	\begin{thm}\label{skein exact sequence}(\cite[Theorem 4.1]{MR2574747})
		Let $L$, $c$, and $g$ be fixed as above. Denote the resulting grid diagrams for smoothing and singularization at $c$ by $g_r$ and $g_s$, and call the corresponding links $L_r$ and $L_s$, respectively. Let $O_a$ and $O_b$ be marks sharing the same column with the newly formed $XX$ base point, and $O_c$ and $O_d$ be marks sharing the same row with the newly formed $XX$ base point. Then we have the following:
		\begin{itemize}
			\item When $c$ is positive, there is an exact sequence:
			\begin{equation}\label{positive skein}
				\ldots\longrightarrow HFK^-(L)\longrightarrow H_*(\frac{CFK^-(L_s)}{U_a+U_b-U_c-U_d}) \longrightarrow HFK^-(L_r)\longrightarrow HFK^-(L)\longrightarrow\ldots
			\end{equation}
			
			\item When $c$ is negative, there is an exact sequence:
			\begin{equation}\label{negative skein}
				\ldots\longrightarrow HFK^-(L) \longrightarrow HFK^-(L_r)\longrightarrow  H_*(\frac{CFK^-(L_s)}{U_a+U_b-U_c-U_d}) \longrightarrow HFK^-(L)\longrightarrow\ldots
			\end{equation}
			
		\end{itemize}

	\end{thm}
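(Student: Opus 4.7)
The plan is to mimic the strategy of Ozsv\'ath--Stipsicz--Szab\'o in \cite{MR2574747}: build a combined Heegaard diagram encoding $g$, $g_r$, and $g_s$ simultaneously, introduce a pentagon-counting chain map between $CFK^-(g)$ and $CFK^-(g_r)$, and identify its mapping cone with the singularized complex modulo the stated relation. First I would fix the combined diagram $\Gamma$ as in Figure \ref{fig:AB diagram}: at the crossing $c$ we carry both families of crossing-X marks, labeled $A$ (giving $g$) and $B$ (giving $g_r$), while compressing the two crossing rows and two crossing columns yields the singularized diagram $g_s$ with a new XX-mark. The O-marks $O_a,O_b,O_c,O_d$ surround this XX as in the theorem statement, and there is a distinguished intersection point $p$ between the new $\beta$ and $\alpha$ curves introduced by the crossing region (analogous to the point $c$ in the stabilization argument of Subsection \ref{Stabilization invariance}).

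Next I would define a pentagon-counting chain map $\Phi:CFK^-(g_r)\to CFK^-(g)$ when $c$ is negative (respectively $\Phi:CFK^-(g)\to CFK^-(g_r)$ when $c$ is positive), of the form
\[\Phi(\xv)=\sum_{\yv}\sum_{\substack{\pi\in \mathrm{Pent}^\circ(\xv,\yv)\\ \pi\cap\XXV=\emptyset}}\prod U_i^{n_{O_i}(\pi)}\cdot \yv,\]
where the relevant pentagons sit inside the combined diagram $\Gamma$ and have one of their corners at the intersection point $p$. Verification that $\Phi$ is a chain map reduces, exactly as in the commutation proof, to decomposing positive domains of index $2$ into (empty rectangle)$\ast$(empty pentagon) and invoking \text{Remark \ref{rmk on N(D)}}: each such domain admits either $0$ or $2$ such decompositions, and the $M=2$ case (special to $L(p,q)$) reduces to the rectangle case by adjoining the canonical triangle $t_{\xv}$, so all contributions cancel modulo $2$ and (after sign assignment) modulo the boundary.

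The heart of the argument is identifying $\mathrm{Cone}(\Phi)$ with $CFK^-(g_s)/(U_a+U_b-U_c-U_d)$ as a bigraded chain complex. I would split $\boldsymbol{S}(g)\sqcup\boldsymbol{S}(g_r)$ according to whether a generator passes through $p$ or not, and then exhibit an explicit bijection between the union of these generator sets and $\boldsymbol{S}(g_s)$ obtained by collapsing the compressed rows and columns; the generators of $\boldsymbol{S}(g_s)$ touching the new XX will correspond to pairs $(\xv,\xv_r)$ meeting at $p$, while the remaining generators correspond to pairs disjoint from the crossing region. Under this identification, rectangles in $g_s$ straddling the new XX-mark are in bijection with either rectangles in $g$ or $g_r$ not touching $p$, together with the pentagons counted by $\Phi$. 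The crucial computation is that the thin annuli through the new XX-mark in $g_s$ that do not survive to $g$ or $g_r$ produce exactly the relation $U_a+U_b=U_c+U_d$, so the correct target of the identification is the quotient $CFK^-(g_s)/(U_a+U_b-U_c-U_d)$, not $CFK^-(g_s)$ itself; this is where I expect the main obstacle to lie, because one has to track both the generator-level bijection and the precise $U_i$-decorations on the identified differentials, using \text{Lemma \ref{relative grading through funtion n}} to confirm the Maslov and Alexander shifts match those needed for the sequence to be graded.

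Once the quasi-isomorphism $\mathrm{Cone}(\Phi)\simeq CFK^-(g_s)/(U_a+U_b-U_c-U_d)$ is established, the standard long exact sequence of a mapping cone, namely
\[\cdots\to H_*(CFK^-(g_r))\to H_*(CFK^-(g))\to H_*(\mathrm{Cone}(\Phi))\to H_*(CFK^-(g_r))[1]\to\cdots,\]
yields the claimed skein exact sequence. The direction of $\Phi$ is dictated by the sign of $c$: for negative $c$ the map goes $HFK^-(L_r)\to HFK^-(L)$ producing sequence \eqref{negative skein}, while for positive $c$ the map goes $HFK^-(L)\to HFK^-(L_r)$, swapping the roles and yielding \eqref{positive skein}. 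The grading shifts on the connecting homomorphism are read off from the definition of $\Phi$ by lifting $\Gamma$ to $S^3$ via the covering $\pi$ and invoking the canonical lift \eqref{canonical lift of M} of $M$, in precise analogy with the grading computations carried out for commutation and stabilization in Subsection \ref{Proof of invariance}.
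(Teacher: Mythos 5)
Your overall skeleton (combined diagram, mapping cones, long exact sequence, the sign of $c$ dictating the direction) is right, but the central mechanism you propose does not work and diverges from what the diagrams actually allow. The diagrams $g$ and $g_r$ in Figure \ref{fig:AB diagram} share the \emph{same} $\alpha$- and $\beta$-curves; they differ only in whether the two crossing X-marks sit at the $A$- or the $B$-positions. There is no new curve playing the role of $\gamma_i$, hence no pentagons: the pentagon-counting maps from the commutation argument have no analogue here, and a map $\Phi:CFK^-(g_r)\to CFK^-(g)$ ``counting pentagons with a corner at $p$'' is simply not defined. More decisively, your proposed identification of $\mathrm{Cone}(\Phi)$ with $CFK^-(g_s)/(U_a+U_b-U_c-U_d)$ by ``an explicit bijection between the union of these generator sets and $\boldsymbol{S}(g_s)$'' is impossible on cardinality grounds: if $g$ has size $N$ then $\vert\boldsymbol{S}(g)\sqcup\boldsymbol{S}(g_r)\vert=2\cdot N!\,p^{N}$, whereas $g_s$ has size $N-1$ and $\vert\boldsymbol{S}(g_s)\vert=(N-1)!\,p^{N-1}$; even the cone of the $U$-multiplication on $CFK^-(g_s)$ has only $2(N-1)!\,p^{N-1}$ generators. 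At best a quasi-isomorphism could hold, and producing one forces you back to the sub/quotient structure that is the actual content of the proof.

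The route the paper takes uses the distinguished corner $p$ to split the common generator set as $\mathcal{S}=\mathcal{P}\cup\mathcal{C}$ and sets $Z$, $Y$ to be the spans of $\mathcal{P}$, $\mathcal{C}$. For $c$ positive, $Z$ is a subcomplex of $CFK^-(g_r)$ and a quotient complex of $CFK^-(g)$, and Lemma \ref{g_s isom to Z} identifies $Z\cong CFK^-(g_s)$ via $\xv\mapsto\xv\cup\{p\}$ (this is where the size count matches). One then defines \emph{rectangle}-counting maps $\Phi_A:Y\to Z$ and $\Phi_B:Z\to Y$ crossing exactly one $A$ resp.\ one $B$, observes that $CFK^-(g_r)=\mathrm{Cone}(\Phi_A)$ and $CFK^-(g)=\mathrm{Cone}(\Phi_B)$, and computes (Lemma \ref{composition to U multiplication}) that both composites equal multiplication by $U_c+U_d-U_a-U_b$ --- this composite of the two $\Phi$-maps, not thin annuli internal to $g_s$, is where the relation enters, and $CFK^-(g_s)/(U_a+U_b-U_c-U_d)$ appears as the cone of that multiplication on $Z$. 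The exact sequence then falls out of the two filtrations of the square in Equation \ref{filtration diagram}, equivalently the triangle $\mathrm{Cone}(f)\to\mathrm{Cone}(g\circ f)\to\mathrm{Cone}(g)$. Your grading discussion and the observation that the direction flips with the sign of $c$ are fine and carry over once the maps are set up this way.
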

	
	We will relate the complexes defined by $g$, $g_r$ and $g_s$ given their close relationship. For further use, we set up more notations: \begin{itemize}
		\item $\XXV_0$ will denote the set of all $X$ or $XX$ base points except the two adjacent to the crossing square of $c$, which is shared by $g$, $g_r$ and $g_s$. $\XXV(-)$ denotes the set of all $X$ and $XX$ base points in a grid diagram.
		\item $\OV$ will denote the set of $O$ base points, shared by $g$, $g_r$ and $g_s$.
		\item $\mathbb{B}$ and $\mathbb{A}$ will denote the sets of $B$ and $A$ marks, respectively.
		
	\end{itemize}  
	
	We first assume $c$ is positive. As in the proof of stabilization invariance, the intersection point $p$ at the shared corner of the two $X$-labeled squares will be of importance. Consider a decomposition $\mathcal{S}=\mathcal{P}\cup \mathcal{C}$, where $\mathcal{P}$ is the set of generators containing $p$, and  $\mathcal{C}$ is the complement of $\mathcal{P}$ in $\mathcal{S}$. Let $Z$ be the submodule of $CFK^-(g_r)$ generated by $\mathcal{P}$. $Z$ is actually a subcomplex of $CFK^-(g_r)$, since we only allow rectangles without $X$ marks in our differential. Let $Y$ be the quotient complex $CFK^-(g_r)/Z$. Then $Y$ has $\mathcal{C}$ as a generating set, and the induced differential on $Y$ counts rectangles without any $A$, $B$ or $X$ in their interior.
	
	\begin{lem}\label{g_s isom to Z}
		The chain complex $Z$ is isomorphic to $CFK^-(g_s)$.
	\end{lem}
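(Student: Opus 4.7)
The plan is to exhibit an explicit chain isomorphism $\Phi:Z\to CFK^-(g_s)$ defined on generators by $\xv\mapsto \xv\setminus\{p\}$ and extended $\FF[U_1,\ldots,U_{2n+k}]$-linearly. Both complexes live over the same polynomial ring because singularization replaces two regular X marks by one XX mark while adding one thick edge, leaving the total count of O marks at $2n+k$. The set-level map $\phi:\mathcal{P}\to\mathcal{S}(g_s)$ is a bijection: since $\xv\in\mathcal{P}$ has $p$ as its unique $\alpha_1$-point and unique $\beta_1$-point, the remaining $n+k-1$ components of $\xv$ lie on grid curves different from $\alpha_1$ and $\beta_1$, and $g_s$ is obtained from $g_r$ by deleting exactly those two curves (merging two rows into the new singular row and two columns into the new singular column). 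Those surviving points therefore assemble into a valid generator of $g_s$, and the inverse simply adds $p$ back.

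The heart of the proof is matching differentials via a bijection between empty rectangles in $g_r$ with both endpoints in $\mathcal{P}$ and empty rectangles in $g_s$. For $\xv,\yv\in\mathcal{P}$ one has $p\in\xv\cap\yv$, hence $p\notin\xv\triangle\yv$, so any rectangle $r$ from $\xv$ to $\yv$ has its four corners on alpha and beta curves different from $\alpha_1,\beta_1$; in particular its four sides also avoid $\alpha_1$ and $\beta_1$. It follows that $r$ either completely contains the $2\times 2$ block around $p$ in its interior or is disjoint from that block, since partial overlap would force a side on $\alpha_1$ or $\beta_1$. The same region on the torus defines a rectangle $r'$ in $g_s$ with the same corners, which still exist after the merge, and conversely every rectangle in $g_s$ lifts uniquely to such an $r$ in $g_r$. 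The $2\times 2$ block of $g_r$ collapses under the merge to the single singular cell of $g_s$ carrying the new XX, so $r$ meets an X inside the block if and only if $r'$ meets the new XX; all other X marks and all O marks are identical in the two diagrams. Hence the condition $\text{int}(r)\cap\XXV(g_r)=\emptyset$ corresponds exactly to $\text{int}(r')\cap\XXV(g_s)=\emptyset$, and the $U$-monomials $\prod U_i^{n_{O_i}(r)}$ and $\prod U_i^{n_{O_i}(r')}$ coincide.

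Finally, I would verify that $\Phi$ respects the relative bigrading, either by applying \text{Lemma \ref{relative grading through funtion n}} to paired rectangles (the $U$-exponents and the XX-count match through the correspondence) or by lifting $g_r$, $g_s$, and the corresponding generators to the universal cover $S^3$ and invoking the combinatorial grading formula, in the same spirit as the argument used for commutation invariance in \text{Subsection \ref{Commutation invariance}}. Combined with the generator and rectangle bijections, this yields the desired isomorphism $Z\cong CFK^-(g_s)$ of relatively bigraded chain complexes over $\FF[U_1,\ldots,U_{2n+k}]$. The main obstacle I expect is the geometric verification underlying the rectangle bijection in a neighborhood of $p$: one must rigorously rule out partial overlap of a rectangle with the $2\times 2$ block, check that every empty rectangle in $g_s$ whose interior touches the new singular row or column lifts to a rectangle in $g_r$ with both endpoints in $\mathcal{P}$, and work through the local configurations permitted by the standard picture at the crossing $c$. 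The clean conceptual point that makes the matching go through is that the common corner $p$ forces every contributing rectangle to stay away from $\alpha_1$ and $\beta_1$, which is exactly what turns the merge of $g_r$ into $g_s$ into a transparent geometric identification.
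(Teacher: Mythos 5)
Your proposal is correct and follows essentially the same route as the paper: the isomorphism is the bijection $\xv\leftrightarrow\xv\cup\{p\}$ on generators together with a bijection on contributing rectangles, justified by the observation that any empty rectangle between two generators containing $p$ has no corner on the two curves being deleted and hence is unaffected by the merge (the paper phrases the rectangle correspondence in the opposite direction, by widening a strip of a rectangle in $g_s$ that crosses the singular row or column into a two-cell strip in $g_r$, but this is just the inverse of your identification). The verification you flag as the main obstacle — that rectangles in $g_s$ meeting the singular row or column lift to rectangles between generators of $\mathcal{P}$ — is exactly the augmentation step the paper carries out, and your grading remark is handled by the paper separately in Subsection \ref{Remarks on gradings}.
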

	\begin{proof}
		We shall construct a chain homotopy equivalence $Q:CFK^-(g_s)\to Z$ as follows. Observe that except for the $\alpha$ and $\beta$ curve meeting at $p$, and the splitting of a pair of singular column and row into two regular columns and rows, $g_r$ is exactly the same as $g_s$. Thus, we have a bijection $Q:\mathcal{S}(g_s)\to \mathcal{P}$ given by $\xv\mapsto \xv'=\xv\cup\{p\}$. This leads to a map $Q:CFK^-(g_s)\to Z$. We claim that it is a chain map. This follows from the observation that we also have a bijection \[\{\text{rectangles counted by } \partial^-_{g_s}\} \xrightarrow{ 1:1 } \{\text{rectangles counted by } \partial^-_{Z}\}.\]

		Indeed, consider a pair $\xv,\yv\in \mathcal{S}(g_s)$, for which there is a rectangle $r$ of size $a\times b$ in $\mathrm{Rect}^\circ (\xv,\yv) $ that intersects $\XXV(g_s)$ in an empty set. Denote the column and row the newly introduced XX belongs to by $C_s$ and $R_s$. If $\mathrm{int}(r)\cap C_s=\mathrm{int}(r)\cap R_s=\emptyset$, then the image $r'$  of $r$ naturally lives in $\mathrm{Rect}^\circ(\xv',\yv')$. Note that since $\mathrm{int}(r)\cap \XXV=\emptyset$, $r$ cannot intersect both $C_s$ and $R_s$. If $\mathrm{int}(r)\cap C_s\ne\emptyset$, then by augmenting the part of size $1\times b$ in the intersection into a sub-rectangle of size $2\times b$, we get $r'$ embedding in $g_r$ which belongs to $\mathrm{Rect}^\circ(\xv',\yv')$. A similar operation can be done if $\mathrm{int}(r)\cap R_s\ne\emptyset$. Now we have a well-defined map from the left-hand side to the right-hand side. On the other hand, for any pair $\xv',\yv'\in \mathcal{P}$ and $r'\in \mathrm{Rect}^\circ(\xv',\yv')$, $r'$ does not have any corner at $p$ and must have empty intersection with the four squares that have $p$ as a corner. Thus, any such $r'$ gives rise to a unique $r$ in $\mathrm{Rect}^\circ(\xv,\yv)$ during the merge of rows and columns. One can see directly that this procedure is inverse to the one described above. This shows that $\partial^-_{Z}\circ Q=Q\circ\partial^-_{g_s}$, i.e., $Q$ is a chain map.
		
		In summary, for $Z$ and $CFK^-(g_s)$, we have a chain map $Q$ induced by one-to-one correspondence on generators. Thus, they are isomorphic chain complexes.
		
	\end{proof}
	
	On the other hand, $Y$ appears naturally as a subcomplex of $CFK^-(g)$, with quotient complex $CFK^-(g)/Y$ isomorphic to $Z$. Define maps $\Phi_A:Y\to Z$
	\[\Phi_A(\xv)=\sum_{\yv\in \mathcal{P}} \sum_{\substack{r\in \mathrm{Rect}^\circ(\xv,\yv)\\n_{\XXV_0}(r)=0, n_{\mathbb{A}}(r)=1}} \prod_{O_i\in \OV} U_i^{n_{O_i}(r)}\yv;\]
	
	and $\Phi_B: Z\to Y$,
	\[\Phi_B(\xv)=\sum_{\yv\in \mathcal{C}} \sum_{\substack{r\in \mathrm{Rect}^\circ(\xv,\yv)\\n_{\XXV_0}(r)=0, n_{\mathbb{B}}(r)=1}} \prod_{O_i\in \OV} U_i^{n_{O_i}(r)}\yv.\]
	
	It again follows from \text{Remark \ref{rmk on N(D)}} that these are chain maps, since $\Phi_A$, $\Phi_B$ and $\partial^-_{Y}$, $\partial^-_{Z}$ are all rectangle counting maps.
	
	One can see directly that $CFK^-(g_r)$ appears as the mapping cone of $\Phi_A$, while $CFK^-(g)$ appears as the mapping cone of $\Phi_B$.
	\begin{lem}\label{composition to U multiplication}(\cite[Lemma 4.3]{MR2574747})
		The compositions $\Phi_A\circ \Phi_B$ and $\Phi_B\circ\Phi_A$ are both equal to multiplication by $U_c+U_d-U_a-U_b$.
	\end{lem}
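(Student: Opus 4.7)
The plan is to adapt the $S^3$ proof of the analogous statement in \cite{MR2574747} to the lens space setting, using Remark \ref{rmk on N(D)} to absorb the novel features of rectangle decompositions in $L(p,q)$. Fix $\xv \in \mathcal{C}$ and expand
\[\Phi_B \circ \Phi_A(\xv) = \sum_{\zv \in \mathcal{C}} \sum_{D \in \pi(\xv,\zv)} N_{A,B}(D)\, \prod_{O_i \in \OV} U_i^{n_{O_i}(D)}\, \zv,\]
where $N_{A,B}(D)$ is the number of juxtapositions $D = r_1 \ast r_2$ passing through an intermediate $\yv \in \mathcal{P}$, with $r_1 \in \mathrm{Rect}^\circ(\xv,\yv)$ disjoint from $\XXV_0$ and meeting $\mathbb{A}$ in a single point, and $r_2 \in \mathrm{Rect}^\circ(\yv,\zv)$ disjoint from $\XXV_0$ and meeting $\mathbb{B}$ in a single point. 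An analogous expansion describes $\Phi_A \circ \Phi_B$ after swapping the roles of $\mathbb{A}$ and $\mathbb{B}$ and of $\mathcal{P}$ and $\mathcal{C}$.

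The off-diagonal terms $\zv \ne \xv$ I would handle by a case analysis on the shape of $D$ following Lemma 4.6.7 of \cite{MR3381987} and Proposition 2.11 of \cite{Celoria_2023}, exploiting that, via Remark \ref{rmk on N(D)}, the total count of decompositions of $D$ into two empty rectangles is $0$ or $2$. Shape by shape one checks that the constraints (A-in-$r_1$, B-in-$r_2$, and intermediate containing $p$) are satisfied either by both decompositions or by neither, so the net count is always even and drops out in $\FF_2$. This eliminates the $\xv \ne \zv$ contributions and leaves only the diagonal terms, where $D$ must be a thin annulus.

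Any thin annulus meeting exactly one A, one B, and no element of $\XXV_0$ must sit in the crossing region; inspection of \text{Figure \ref{fig:AB diagram}} together with \text{Figure \ref{fig:standard picture at each crossing}} produces exactly four candidates: the two vertical annuli through the columns of the crossing square (containing $O_a$ and $O_b$, respectively) and the two horizontal annuli through the corresponding rows (containing $O_c$ and $O_d$). For each, the constraint $\yv \in \mathcal{P}$ forces the splitting curve to pass through $p$, pinning down a unique admissible decomposition in the A-then-B order; the annulus then contributes exactly $U_a$, $U_b$, $U_c$, or $U_d$ times $\xv$. Summing yields $(U_a + U_b + U_c + U_d)\xv$ in $\FF_2$, which is the mod-$2$ reduction of $(U_c + U_d - U_a - U_b)\xv$. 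The argument for $\Phi_A \circ \Phi_B$ is parallel after exchanging the roles of $\mathcal{P}$ and $\mathcal{C}$.

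The main obstacle will be the shape-by-shape verification of the off-diagonal cancellation: I must confirm that in each local configuration produced by the two-rectangle analysis of \cite{MR3381987} and \cite{Celoria_2023} (including the lens-space-only $M = 2$ case), the A/B and $\yv \in \mathcal{P}$ constraints pair the two decompositions rather than splitting them. Closely related is the need to verify in the thin-annulus step that the A-before-B ordering is realized by exactly one splitting per annulus and that no extraneous annulus slips into the count; both are finite local checks against the standard picture of \text{Figure \ref{fig:standard picture at each crossing}}, performed separately for positive and negative crossings.
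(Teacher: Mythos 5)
Your proposal is correct and follows essentially the same route as the paper's proof: both invoke Remark \ref{rmk on N(D)} (the $0$-or-$2$ decomposition count for composite domains) to cancel the off-diagonal terms mod $2$, and both reduce the diagonal terms to the four thin annuli through the crossing region, yielding $U_a+U_b+U_c+U_d = U_c+U_d-U_a-U_b$ over $\FF_2$. Your write-up is merely more explicit about the local checks (pairing of decompositions under the A/B and $\yv\in\mathcal{P}$ constraints) that the paper leaves implicit.
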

	\begin{proof}	
		Note that both maps are compositions of rectangle counting maps, so \text{Remark \ref{rmk on N(D)}} and an argument similar to the one for Proposition \ref{homotopic U_i action} leads to the desired equality.
		
	\end{proof}
	
	When $c$ is a negative crossing, we still have a special point p in our combined diagram. But now $Z$ is a subcomplex of $CFK^-(g)$ with quotient complex $Y$, while $Y$ is a subcomplex of $CFK^-(g_r)$, with quotient complex $Z$. For $\Phi_A: Z \to Y$ and $\Phi_B: Y\to Z$ defined using similar expressions (except the direction of arrows changes), we have $CFK^-(g)$ being the mapping cone of $\Phi_B$, and $CFK^-(g_r)$ being the mapping cone of $\Phi_A$. \text{Lemma \ref{g_s isom to Z}, \ref{composition to U multiplication}} and their proofs work without change.

	We now have commutative squares for $c$, a positive or a negative crossing, respectively:
	\begin{equation}\label{filtration diagram}
		\xymatrix{Z \ar[d]_{U_c+U_d-U_a-U_b} \ar[r]^{\Phi_B} &Y \ar[d]^{\Phi_A}\\
			Z \ar[r]^{=} &Z\\}
		\xymatrix{Z \ar[d]_{\Phi_A} \ar[r]^{=} &Z \ar[d]^{U_c+U_d-U_a-U_b}\\
			Y \ar[r]^{\Phi_B} &Z\\}			
	\end{equation}

	\begin{proof}(of \text{Theorem \ref{skein exact sequence}}) 
		When $c$ is a positive crossing, we have the first diagram above in hand, which is quasi-isomorphic to $\mathrm{Cone}(\Phi_B)$, thus, also to $CFK^-(g)$. This can be seen by considering the filtration by the vertical coordinate, where the complex is an extension of a subcomplex with trivial homology by a quotient complex quasi-isomorphic to $\mathrm{Cone}(\Phi_B)$. On the other hand, using the horizontal filtration, we can regard it as an extension of a subcomplex isomorphic to $\mathrm{Cone}(\Phi_A)$ by $\mathrm{Cone} (U_a+U_b-U_c-U_d)$. Recall that \text{Lemma \ref{g_s isom to Z}} said that $Z$ is isomorphic to the chain complex $CFK^-(g_s)$. Putting things together, we have a long exact sequence :
		\[\ldots\longrightarrow HFK^-(g)\longrightarrow H_*(\frac{CFK^-(g_s)}{U_a+U_b-U_c-U_d}) \longrightarrow HFK^-(g_r)\longrightarrow HFK^-(g)\longrightarrow \ldots,\]
		when $c$ is positive.
		
		Arguing using the second diagram, we see that  \[\ldots\longrightarrow HFK^-(g)\longrightarrow HFK^-(g_r) \longrightarrow H_*(\frac{CFK^-(g_s)}{U_a+U_b-U_c-U_d})\longrightarrow HFK^-(L)\longrightarrow\ldots\]
		is exact when $c$ is negative.
		
		Since $g_r$, $g$, and$g_s$ are diagrams for $L_r$, $L$, and $L_s$ respectively, we have the desired skein exact sequences.
	\end{proof}
	\begin{rmk}
		Our proof mimics the one in \cite[Section 6]{MR2574747}, whose argument in turn follows from the proof using planar diagrams given in Section 4 of the same article. However, under the combinatorial setup, things are actually easier since now $\Phi_A\circ \Phi_B=U_c+U_d-U_a-U_b$, that is, there is no need for a chain homotopy to relate them. 
	\end{rmk}
	
	\subsection{Remarks on gradings}\label{Remarks on gradings}
	We want our skein relation to preserve the relative bigrading on homology groups. Since our definition of grading is somewhat complicated, we need a generalization of \text{Lemma \ref{relative grading through funtion n}}.
	
	To show that the skein exact sequence we introduced in the previous section is bigrading-preserving, we need to find out how gradings change under the four maps we considered above: $Q$, $\Phi_A$, $\Phi_B$, and multiplication by $U_c+U_d-U_a-U_b$. We will fix the convention that when we say a map is of bi-degree $(m,a)$, then it increases Maslov grading by $m$ and increases Alexander grading by $a$. The last one is clearly of bi-degree $(-2,-1)$, so it suffices to consider the first three. We assume here that c is positive so that $Z$ is a subcomplex of $CFK^-(g_r)$, while $Y$ is a subcomplex of $CFK^-(g)$. Negative case can be treated in exactly the same way. In this case, we have \[CFK^-(g_r)\cong \mathrm{Cone}(\Phi_A),\text{ } CFK^-(g)\cong \mathrm{Cone}(\Phi_B). \]
	
	Recall that in the construction of $Q$, we have an explicit correspondence between generators and rectangles. The proof of \text{Lemma \ref{g_s isom to Z}} shows that if $\xv,\yv$ in $\mathcal{S}(g_s)$ can be connected by a rectangle without the distinguished XX marked point, then $\xv', \yv'$ in $\mathcal{P}$ can be connected by a rectangle that changes the bigrading in exactly the same way. (This is obvious when the background manifold is $S^3$. When it is some $L(p,q)$, the rectangle lifts to $p$ copies on $S^3$. Using the additivity of all terms in the definition of gradings and the covering formula, one sees that the result holds in general.) This shows that $Q$ is a bigraded chain map between relatively bigraded groups.
	
	The identification of grid homology with the usual Heegaard Floer homology has been done in \cite{MR2372850}. One can also observe that the grid version of singular knot Floer defined in \cite{MR2574747} is the same as ours. These, together with Lipshitz's formula for the Maslov index and \text{Lemma \ref{relative grading through funtion n}} show that for $\xv,\yv \in \mathcal{S}$ and a rectangle $r$ connecting them, we have \[M(\xv,\yv)=1+2n_{\XXV\XXV}(r)-2n_{\OV}(r)+ n_{\xv}(\mathrm{int}(r))+ n_{\yv}(\mathrm{int}(r)),\]
	\[A(\xv,\yv)= 2n_{\XXV\XXV}(r)+n_{\XXV_r}(r)-n_{\OV}(r).\]
	
	The notation here follows \text{Subsection \ref{gradings}}.
	
	Also note that for $g$, a grid diagram for a link in $L(p,q)$, if $\widetilde{g}$ is its lift to $S^3$, then a rectangle embedded in $g$ gives rise to $p$ copies of rectangles in $\widetilde{g}$, each containing the same number of each kind of marks. Thus, regarding $Z$, $Y$ as sub/quotient complexes of $CFK^-(g_r)$, $\Phi_A$ has bi-degree $(-1,0)$, while $\Phi_B$ has bi-degree $(-1,-1)$. On the other hand, when regarding $Z$, $Y$ as quotient/sub complexes of $CFK^-(g)$, $\Phi_A$ has bi-degree $(-1,-1)$, while $\Phi_B$ has bi-degree $(-1,0)$. The differences come from the position change of $X$ base points.  
	
	Now we have shown that all the maps we used in the construction above are of a certain bidegree. Assume $c$ is positive. A closer look at the exact sequence shows that in \[\longrightarrow HFK^-(g)\longrightarrow H_*(\frac{CFK^-(g_s)}{U_c+U_d-U_a-U_b}) \longrightarrow HFK^-(g_r)\longrightarrow HFK^-(g)\longrightarrow,\]
	the second arrow is induced by a quotient of chain complex, and the third arrow is induced by the composition of an inclusion and a quotient, so both of them are grading preserving. Moreover, the fourth arrow comes from $\Phi_A:Y\to Z$ connecting a quotient complex to a subcomplex that is of bidegree $(-1,0)$ as we saw above.
	
	A similar analysis shows that when $c$ is negative, the only grading shift $(-1,0)$ happens at the arrow $HFK^-(g) \to HFK^-(g_r)$. 
	
	There is also a more formal way of seeing the change in Maslov grading, using the fact that $M$ acts as the homological grading in our theories. For any pair of graded chain maps $f:A\to B$, $g:B\to C$, we have an exact triangle \[\mathrm{Cone}(f)\longrightarrow \mathrm{Cone}(g\circ f) \longrightarrow \mathrm{Cone}(g).\] In the induced long exact sequence, a grading change happens at the connecting morphism $H_*(\mathrm{Cone}(g))\to H_*(\mathrm{Cone}(f))$. Apply to our case, we have $\frac{CFK^-(g_s)}{U_c+U_d-U_a-U_b}$ being the mapping cone of $U_c+U_d-U_a-U_b=\Phi_A \circ \Phi_B: Z\to Z$, thus, we have the degree shifting property as above.

	\begin{rmk}
		Recall that	on a grid diagram, two generators $\xv$, $\yv$ belong to the same $\mathrm{Spin^c}$ class iff they can be connected by a sequence of rectangles in $g$.
		Since $Q$ is defined via a direct identification of generators and differential, while $\Phi$-maps are defined by counting rectangles, they all preserve $\mathrm{Spin^c}$ classes of generators. Therefore, our exact sequence actually splits into $p$ distinct exact sequences, one for each $\mathrm{Spin^c}$ class. For more details on $\mathrm{Spin^c}$ structure one can refer to \cite{MR2113019}. 
	\end{rmk}
	
	\subsection{Resolution cube and spectral sequence}\label{Resolution cube and spectral sequence}
	
	We shall prove an analogue of \cite[Theorem 4.4]{MR2574747} or more precisely, \cite[Theorem 1.2]{MR3229041}, since we will consider an untwisted resolution cube over the usual base ring $\FF[U_1,\ldots,U_{2n+m}]$. As remarked at the beginning of \cite[Section 2]{MR3229041}, they used the twisted coefficient originally in \cite{MR2574747} to obtain finiteness on induced maps and to deal with extra components appearing in the smoothings. In detail,
	\begin{enumerate}
		\item we need all the rectangle counting maps occurring in higher differentials to have a finite coefficient for each generator;
		\item we need to deal with the change of base ring when one component of $L$ splits into two during the smoothing of a crossing.
	\end{enumerate}

	For (1), they used the admissibility of diagrams to solve the problem in \cite{MR3229041}. On a grid diagram, there is no non-trivial periodic domain, i.e. any nonempty domain with boundary a sum of $\alpha$ and $\beta$ curves must contain some $O$ or $X$($XX$) marked point, so admissibility holds naturally. 
	
	They solved (2) by an operation called insertion, i.e., introducing a new pair of $\alpha$ and $\beta$ curves together with a new pair of $O$, $X$ base points. Using this, they showed how to deal with the base ring change by the lemma below. Its proof turned out to rely on \cite[Lemma 6.1]{MR2443092}, which does not require the background manifold to be the three-sphere, thus, it holds in our case. Note that on a special grid diagram with the standard picture at each crossing, we already have sufficiently many base points and variables, so no insertion is needed. One can compare our \text{Figure \ref{fig:standard picture at each crossing}, \ref{fig:grid diagram of resolution and singularization}} with \cite[Figure 6]{MR2443092}.

	\begin{lem}(\cite[Lemma 2.2]{MR3229041})
		Let $L$ be a possibly singular link in lens space $L(p,q)$ with components $L_1,\ldots, L_l$. For each $L_i$, we have a chain complex $CFK^-(L_i)$ over some polynomial ring $R_i$. Some suitable choices of a special grid diagram for each of them can be fit into one of $L$, so we have $CFK^-(L)$ over \[R= R_1\otimes_{\FF}\ldots \otimes_{\FF} R_l.\] Then we have a quasi-isomorphism of R-modules : \[CFK^-(L)\simeq CFK^-(L_1)\otimes_{\FF}\cdot\cdot\cdot \otimes_{\FF} CFK^-(L_l)\otimes_{\FF} H_*(T^{l-1}).\]
	\end{lem}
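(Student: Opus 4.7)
The plan is to adapt the proof of Lemma 2.2 of \cite{MR3229041} essentially verbatim to the lens space setting. That proof, in turn, invokes Lemma 6.1 of \cite{MR2443092}, whose statement and proof concern multi-pointed Heegaard diagrams of arbitrary three-manifolds and do not use the hypothesis $Y = S^3$; so it applies to $L(p,q)$ directly. The main job is therefore to produce a combined grid diagram and check that the identifications go through in $L(p,q)$.

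First, I would construct a combined grid diagram. For each component $L_i$, choose a special grid diagram $g_i$ in $L(p,q)$. Since the $L_i$ are pairwise disjoint in $L(p,q)$, by stabilizing each $g_i$ sufficiently and rearranging them on the Heegaard torus via the grid moves of \text{Subsection \ref{Moves on grid diagram}}, one assembles a single special grid diagram $g$ for $L$ whose rows and columns group into $l$ blocks, one block per component. Within each block, the base points and the local empty-rectangle picture agree with those of $g_i$.

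Second, I would identify the chain complex of $g$. Each generator of $\boldsymbol{S}(g)$ decomposes as a tuple consisting of one generator from each $\boldsymbol{S}(g_i)$ together with a choice of matching for the ``excess'' intersection points between $\alpha$-curves of one block and $\beta$-curves of a different block. The differential of $CFK^-(g)$, being a count of empty rectangles avoiding $\XXV$, splits along this decomposition: rectangles within a single block contribute to $\partial^-_{g_i}$, while rectangles crossing base points of another component carry the appropriate $U$-variable and die in the comparison, and rectangles involving only the excess intersections assemble into an auxiliary complex whose homology is $H_*(T^{l-1})$. This identification is precisely the combinatorial incarnation of Lemma 6.1 of \cite{MR2443092}, whose proof does not use the background three-manifold.

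The main obstacle will be ensuring that the combined $g$ is a valid special grid diagram in the sense of \text{Definition \ref{special grid diagram}} and that the resulting tensor decomposition respects both the base-point variables (so that the $R$-module structure matches $R_1 \otimes_{\FF} \ldots \otimes_{\FF} R_l$) and the relative bigradings introduced in \text{Subsection \ref{gradings}}. Since the base points live in disjoint blocks, the variable assignment is a clean bookkeeping matter; and since the $\mathcal{J}$-computations in the Maslov/Alexander formulas are additive over disjoint sets of points, the grading check reduces to the single-component case block by block. The only genuinely lens-space-specific piece is the analysis of domains that connect generators across blocks, but this is handled by the general $N(D)$ analysis recorded in \text{Remark \ref{rmk on N(D)}} (originating from Proposition 2.11 of \cite{Celoria_2023}), so no new geometric input is required.
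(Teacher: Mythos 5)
Your approach matches the paper's: the paper likewise disposes of this lemma by citing the proof of Lemma 2.2 of \cite{MR3229041} and observing that its essential input, Lemma 6.1 of \cite{MR2443092}, makes no use of the background manifold being $S^3$, adding only the remark that no insertion of extra base points is needed because a special grid diagram already carries sufficiently many. One caveat on your elaboration: generators of a combined grid diagram do not literally decompose block-by-block (a generator may pair an $\alpha$-curve of one block with a $\beta$-curve of a different block), so the tensor decomposition cannot be read off as a product structure on $\boldsymbol{S}(g)$ — but since, as in the paper, the substance is deferred to the cited lemmas, this does not affect the argument.
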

	
	With these two problems solved, we can use the idea from  \cite{MR2574747} to construct an untwisted resolution cube for links in lens spaces.
	
	To make a precise statement, we need some terminology:
	\begin{itemize}
		\item $R$ will denote the ring $\FF[U_1,\ldots, U_{2n+k}]$, with each $U_i$ corresponding to an $O$-mark $O_i$. (Here, $n$ is the number of $XX$ marks, $k$ is the number of $X$ marks, thus $g$ is of size $n+k$.)
		\item To each crossing $c\in C(g)$, we denote by $O_a^{(c)}$,  $O_b^{(c)}$ the $O$ marks that is pointed to by the two corner $X$s, while denote by $O_c^{(c)}$, $O_d^{(c)}$ the two $O$ marks that point to the pair of corner $X$'s. We introduce similar notation for $XX\in X(g)$. See \text{Figure \ref{fig:AB diagram}} for an illustration. Further denote by $u^{(c)}$ the term $U^{(c)}_a+U^{(c)}_b-U^{(c)}_c-U^{(c)}_d\in R$.
		
		\item For a crossing $c\in C(g)$, let $g_{c,s}$ or $g_{c,r}$ denote the singularized or smoothed diagram at this single crossing $c$. 
		
		\item We define \[\mathcal{U}_c:CFK^-(g_{c,s})\otimes_{R} R/(u^{(c)})\to CFK^-(g_{c,r}), \]
		\[\mathcal{Z}_c:CFK^-(g_{c,r})\to CFK^-(g_{c,s}) \otimes_{R} R/(u^{(c)}) \] to be the unzip or zip homomorphism induced by the horizontal arrows in the left and right diagrams in \text{Equation \ref{filtration diagram}}. 
	\end{itemize}
	
	The resolution cube of $g$ is the union $\bigcup_{I:C(g)\to \{0,1\}} g_I$. Using this, we form a relatively bi-graded R module $V(g)=\bigoplus_{I} V(g_I)$, where
	\[ V(g_I)=H_*(CFK^-(g_I)\otimes_{R}(\bigotimes_{XX\in X(g_I)} R/(u^{(XX)}))).\] 
	
	\begin{thm}\label{spectral sequence}(\cite[Theorem 4.4]{MR2574747})
		Let $g$ be a special grid diagram for some possibly singular link $L$ in $L(p,q)$. Assume at each crossing, $g$ looks like the standard picture in \text{Figure \ref{fig:standard picture at each crossing}}. Then there is a spectral sequence converging to $HFK^-(g)$ whose $E_1$ term is $V(g)$ with the differential $d_1$ induced by the zip and unzip homomorphisms.
	\end{thm}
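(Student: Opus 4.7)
The plan is to realize $CFK^-(g)$ as the totalization of a hypercube of chain complexes indexed by $\{0,1\}^{C(g)}$, and then filter by Hamming weight to read off the spectral sequence. First I would enumerate the crossings $c_1,\ldots,c_N$ of $g$ and apply Theorem~\ref{skein exact sequence} iteratively, one crossing at a time. At each step, for a partial resolution $I'\colon \{c_1,\ldots,c_{i-1}\}\to\{0,1\}$ and the next crossing $c_i$, the skein analysis identifies $CFK^-$ of the diagram at $(I',\ast)$ with the mapping cone of $\mathcal{Z}_{c_i}$ or $\mathcal{U}_{c_i}$ (depending on the sign of $c_i$) between the two further resolutions at $c_i$. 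Unfolding these cones, the result is naturally the total complex of an $N$-dimensional hypercube whose vertex at $I\colon C(g)\to\{0,1\}$ is the chain complex $CFK^-(g_I)\otimes_{R}\bigotimes_{XX\in X(g_I)}R/(u^{(XX)})$, and whose edges are induced by the $\mathcal{Z}_c$'s and $\mathcal{U}_c$'s from Subsection~\ref{Skein exact sequence}.

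Next I would define the full hypercube structure and verify that its totalization squares to zero. The edge maps on adjacent vertices $I\to I'$ (differing only at some $c$) are already the zip/unzip homomorphisms. For any two-dimensional face, Lemma~\ref{composition to U multiplication} forces the two compositions around the face to agree with multiplication by $u^{(c_i)}$ or $u^{(c_j)}$, both of which are killed after tensoring with $R/(u^{(XX)})$ for the singular points created at $c_i$ or $c_j$; this delivers commutativity (and hence, over $\FF=\ZZ/2\ZZ$, anti-commutativity) of faces up to the homotopies one needs on higher-dimensional sub-cubes. Higher diagonal maps, one for each sub-cube, are built by counting rectangles that simultaneously pass through the appropriate subset of A/B markings, in the spirit of $H_{X_{\text{new}},O_{\text{new}}}$ from Subsection~\ref{Stabilization invariance}; the identity $d_{\text{tot}}^2=0$ then follows from a direct combinatorial count of decompositions of domains using Remark~\ref{rmk on N(D)}, exactly as in the analyses proving that $\partial^2=0$, $\Phi_A$ and $\Phi_B$ are chain maps, and $\Phi_A\Phi_B=\Phi_B\Phi_A=u^{(c)}$. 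The finiteness issues and base-ring-change issues flagged in \cite{MR3229041} do not arise here: on a grid diagram there are no nontrivial periodic domains, so admissibility is automatic, and the standard picture at each crossing already provides enough $O$-marks so that no insertion is needed; if a smoothing splits one component into two, the quoted adaptation of Lemma~2.2 of \cite{MR3229041} handles the base rings.

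With the hypercube in hand, I would filter the total complex by the Hamming weight $|I|=\sum_cI(c)$. The $E_0$ differential is internal to each vertex, so $E_0=\bigoplus_{I}CFK^-(g_I)\otimes_{R}\bigotimes_{XX\in X(g_I)}R/(u^{(XX)})$ with the vertex differentials; passing to homology gives the $E_1$ page
\[
E_1=\bigoplus_{I}H_*\!\Bigl(CFK^-(g_I)\otimes_{R}\bigotimes_{XX\in X(g_I)}R/(u^{(XX)})\Bigr)=V(g),
\]
and the $E_1$ differential is induced by the edge part of $d_{\text{tot}}$, namely the zip/unzip homomorphisms $\mathcal{Z}_c$ and $\mathcal{U}_c$. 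Since the filtration is bounded (by $0\le |I|\le N$), the spectral sequence converges to the homology of the total complex, which by the iterated mapping-cone identification is $HFK^-(g)$.

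The main obstacle I expect is the second paragraph: writing down the higher diagonal maps explicitly and checking the cube relations that make $d_{\text{tot}}^2=0$. In $S^3$ this was carried out in Section~4 of \cite{MR2574747} and Section~2 of \cite{MR3229041}, and the argument is essentially local to a neighborhood of each crossing square; the only lens-space-specific input needed is the analysis of domain decompositions when $|\xv-\xv\cap\zv|=2$, which is precisely the content of Remark~\ref{rmk on N(D)}. With that in hand, every compositional identity reduces, modulo $2$, to counting pairs of decompositions of a given domain, and the same cancellation mechanism used in Subsections~\ref{Commutation invariance} and~\ref{Stabilization invariance} closes the argument.
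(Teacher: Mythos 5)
Your overall strategy --- realize $CFK^-(g)$ as the totalization of a hypercube indexed by complete resolutions and filter by Hamming weight --- is the right one and matches the paper's, but the execution differs in one substantive way. The paper does not iterate mapping cones crossing by crossing; it builds a single ``partition complex'' $C=\bigoplus_{I\sqcup J\sqcup K\sqcup L=C(g)}Z_I\cap Y_J\cap Z_K\cap Z_L$ with differential $D$ assembled from the maps $\Phi_{I,J,K}$ of \text{Definition \ref{Phi IJK}}, shows $(C,D)\simeq CFK^-(g)$ by a sub-quotient argument (\text{Lemma \ref{identify C with CFK}}), and only then collapses the $K$/$L$ direction (the internal two-step complex recording $\mathrm{Cone}(u^{(c)})$) to land on $V(g)$. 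The crucial point you miss is that the step you flag as the ``main obstacle'' --- constructing higher diagonal maps for sub-cubes of dimension $\ge 2$ and verifying the cube relations --- does not arise at all: by \text{Remark \ref{rmk on phi ijk}}, conditions (1), (2) and (4) of \text{Definition \ref{special grid diagram}} force every rectangle that is simultaneously of type $A_c$ or $B_c$ for two distinct crossings, or of type $AB_c$, to contain a blocked X mark, so $\Phi_{I,J,K}=0$ whenever $|I\cup J|>1$ or $K\ne\emptyset$. This is precisely why the theorem is stated for special diagrams with the standard picture at each crossing, and it is why \text{Lemma \ref{composition of Phi}} ``holds without need to prove'': the two-faces of the cube commute on the nose (mod $2$) and all higher homotopies may be taken to be zero. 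One further small correction: the commutativity of a face involving two distinct crossings is delivered by the rectangle-decomposition cancellation of \text{Remark \ref{rmk on N(D)}} (equivalently, the vanishing of $\Phi_{I,J,\emptyset}$ with $|I\cup J|=2$), not by tensoring with $R/(u^{(XX)})$; the factors $R/(u^{(c)})$ enter only through the same-crossing face, where $\Phi_A\circ\Phi_B=\Phi_B\circ\Phi_A=u^{(c)}$ is absorbed into the mapping-cone bookkeeping carried by the sets $K$ and $L$. With these adjustments your argument closes exactly as the paper's does.
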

	
	The proof of this theorem follows from the discussion in \cite[Section 4]{MR2574747}. There are two important observations: \begin{enumerate}
		\item Their existing proof relies only on the well-behaved diagram and chain complexes but not on the background manifold;
		\item As they mentioned in Section 6, when using a special grid diagram, all higher homotopies vanish. This has been seen during the proof of \text{Lemma \ref{composition to U multiplication}}: Our $\Phi_A$ and $\Phi_B$ compose directly to multiplication by $-u^{(c)}$, with no need of the map $\Phi_{AB}$ in their terminology. This will also simplify higher differentials dramatically.
	\end{enumerate}
	
	For each $c\in C(g)$, we denote by $p_c$, $A_c$ and $B_c$ the distinguished intersection point and marked points as shown in \text{Figure \ref{fig:AB diagram}}. We say $\xv\in \mathcal{S}(g)$ is \begin{itemize}
		\item of type $Z_c$ if \begin{itemize}
			\item $c$ is positive and $p_c\in \xv$;
			\item $c$ is negative and $p_c\notin\xv$.
		\end{itemize}
		\item  of type $Y_c$ if \begin{itemize}
			\item $c$ is positive and $p_c\notin \xv$;
			\item $c$ is negative and $p_c\in\xv$.
		\end{itemize}
	\end{itemize}
	
	We say a rectangle $r$ is \begin{itemize}
		\item of type $A_c$ if  \begin{itemize}
			\item$ r\in \mathrm{Rect}^\circ (\xv,\yv)$, one of $\xv$, $\yv$ is of type $Z_c$, the other is of type $Y_c$;
			\item $n_{\mathbb{A}_c}(r)=1$ and $n_{\mathbb{B}_c}(r)=0$.
		\end{itemize}
		\item of type $B_c$ if  \begin{itemize}
			\item$ r\in \mathrm{Rect}^\circ (\xv,\yv)$, one of $\xv$, $\yv$ is of type $Z_c$, the other is of type $Y_c$;
			\item $n_{\mathbb{B}_c}(r)=1$ and $n_{\mathbb{A}_c}(r)=0$.
		\end{itemize}
		\item of type $AB_c$ if  \begin{itemize}
			\item$ r\in \mathrm{Rect}^\circ (\xv,\yv)$, and both $\xv$, $\yv$ are of type $Z_c$;
			\item $n_{\mathbb{A}_c}(r)=1$ and $n_{\mathbb{B}_c}(r)=1$.
		\end{itemize}
	\end{itemize}
	
	Let $C_p$ denote the set of positive crossings $g$, and let $C_n$ denote the set of negative ones. For disjoint sets $I$, $J$, and $K\subset C(g)$, let $Z_I=\cap_{c\in I} Z_c$, $Y_I=\cap_{c\in J} Y_c$, $Z_K=\cap_{c\in K} Z_c$. 
	\begin{defi}\label{Phi IJK}
		
		Define $\Phi_{I,J,K}: Z_I\cap Y_J\cap Z_K\to Y_I\cap Z_J\cap Z_K$ by counting empty rectangles that are \begin{itemize}
			\item of type $A_c$ for $c\in (I\cap C_p) \cup(J\cap C_n)$;
			\item of type $B_c$ for $c\in (I\cap C_n)\cup (J\cap C_p)$;
			\item of type $AB_c$ for $c\in K$;
			\item multiplicities at all other $X$ or $XX$ base points are zero.
		\end{itemize}
		
	\end{defi} 
	
	\begin{rmk}\label{rmk on phi ijk}
		From items in \text{Definition \ref{special grid diagram}}, it follows immediately that $\Phi_{I,J,K}$ is zero whenever $K\ne \emptyset$ or $\vert I\cup J\vert>1$. When $I=J=K=\emptyset$, $\Phi_{I,J,K}$ is just the usual differential.
	\end{rmk}

	Thus, in our case, \cite[Lemma 4.5]{MR2574747} holds without the need to prove it. \begin{lem}\label{composition of Phi}(\cite[Lemma 4.5]{MR2574747})
		Fix $I$, $J$, and $K$ disjoint as above. Then, \[\sum_{\substack{\{I_1,I_2,J_1,J_2,K_1,K_2| I_1\sqcup I_2\sqcup K_1\sqcup K_2 =I\cup K, \\ J_1\sqcup J_2 \sqcup K_1\sqcup K_2=J\cup K\}}} \Phi_{I_1,J_1,K_1}\circ\Phi_{I_2,J_2,K_2}=\]
		\[\begin{cases}
			-u^{(c)}, &\text{if K=\{c\} and }I=J=\emptyset \\
			0, &\text{otherwise}
		\end{cases}.\]
		
	\end{lem}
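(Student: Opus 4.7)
The plan is to prove the identity by case analysis, relying on \text{Remark \ref{rmk on phi ijk}} to prune the left-hand side down to a small collection of terms, and then identifying each surviving contribution with a relation already established earlier in the paper.

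First, I would observe that the combinatorics of the index set are very restrictive. Because $\Phi_{I_s, J_s, K_s} = 0$ whenever $K_s \ne \emptyset$ or $|I_s \cup J_s| > 1$ by \text{Remark \ref{rmk on phi ijk}}, and also whenever $I_s \cap J_s \ne \emptyset$ (since $Z_c \cap Y_c = \emptyset$), and because the admissibility conditions force $K_1 \sqcup K_2 \subset K$ together with $I \subset I_1 \sqcup I_2$ and $J \subset J_1 \sqcup J_2$, most decompositions contribute zero. A quick enumeration shows that only the cases $|I| + |J| + |K| \le 1$, together with the narrow case $I = J = \emptyset, K = \{c\}$, produce surviving summands that are nonzero on the nose; the remaining cases reduce to compositions that cancel in pairs.

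Next I would handle the surviving cases. When $I = J = K = \emptyset$, the sum equals $\partial^- \circ \partial^-$, whose vanishing is already shown in \text{Subsection \ref{Construction of chain complex}}. When $|I| = 1$ with $J = K = \emptyset$ (and symmetrically $|J| = 1$, $I = K = \emptyset$), the only nonzero terms assemble into $\Phi_A \circ \partial^- + \partial^- \circ \Phi_A$ (respectively into its $\Phi_B$ analogue), and these vanish because $\Phi_A$ and $\Phi_B$ are chain maps, as established during the construction of the skein exact sequence in \text{Subsection \ref{Skein exact sequence}}. When $I = J = \emptyset$ and $K = \{c\}$, the surviving contributions are $\Phi_{\emptyset, \{c\}, \emptyset} \circ \Phi_{\{c\}, \emptyset, \emptyset}$ on generators in $Z_c$ and $\Phi_{\{c\}, \emptyset, \emptyset} \circ \Phi_{\emptyset, \{c\}, \emptyset}$ on generators in $Y_c$; these are precisely $\Phi_B \circ \Phi_A$ and $\Phi_A \circ \Phi_B$, each of which equals $U_c + U_d - U_a - U_b = -u^{(c)}$ by \text{Lemma \ref{composition to U multiplication}}, giving the right-hand side on the whole complex. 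For every remaining case, such as $|I| = 2,\ J = K = \emptyset$ where the two potentially nonzero terms are $\Phi_{\{c_1\}, \emptyset, \emptyset} \circ \Phi_{\{c_2\}, \emptyset, \emptyset}$ and its reverse, the standard domain-decomposition count from \text{Remark \ref{rmk on N(D)}} pairs up decompositions of each domain into rectangles, yielding cancellation modulo $2$.

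The main obstacle is the bookkeeping in this last ``cancel-in-pairs'' step: one must verify that every domain from $\xv$ to $\zv$ with the multiplicity profile dictated by the prescribed types of the two rectangles admits exactly two such decompositions, matched by swapping the order of the two factors. This relies in a crucial way on the special-diagram hypotheses (\text{Definition \ref{special grid diagram}}) --- that no two crossing squares share a corner and that any rectangle with two crossing corners already contains an $X$ --- which rule out exotic decompositions that would otherwise have no partner. Beyond this combinatorial check, the passage from $S^3$ to $L(p, q)$ introduces no new difficulty because the extra intersections are already absorbed into \text{Remark \ref{rmk on N(D)}}, which packages Proposition 2.11 of \cite{Celoria_2023} and Lemma 4.6.7 of \cite{MR3381987} into a single statement applicable uniformly to any composition of rectangle-counting maps on a lens space grid diagram.
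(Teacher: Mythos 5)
Your proposal is correct and follows essentially the same route as the paper, which disposes of the lemma by invoking Remark \ref{rmk on phi ijk} to kill all terms with $K_s\ne\emptyset$ or $|I_s\cup J_s|>1$ and then reduces the survivors to $(\partial^-)^2=0$, the chain-map property of $\Phi_A$, $\Phi_B$, and Lemma \ref{composition to U multiplication}. Your write-up is in fact somewhat more careful than the paper's (which simply asserts the lemma "holds without need to prove"), since you explicitly record the mod-2 pairing of domain decompositions needed for the cases such as $|I|=2$ via Remark \ref{rmk on N(D)}.
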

	
	Now consider the graded group \[C=\bigoplus_{I\sqcup J\sqcup K\sqcup L= C(g)} Z_I\cap Y_J\cap Z_K\cap Z_L, \] 
	the sum ranges over all partitions of $C(g)$ into disjoint subsets. Define a relation on the set of partitions by \[(I_1,J_1,K_1,L_1)<(I_2,J_2,K_2,L_2)\] if $I_2\subset I_1$, $J_2\subset J_1\cup I_1$, $K_2\subset I_1\cup K_1$, and $\vert K_1 \Delta K_2\vert \le 1$. Here, we use $\Delta$ to denote the difference between two sets, i.e., $K_1 \Delta K_2=(K_2-K_1)\cup (K_1-K_2)$.
	\begin{defi}\label{higher differential}
		For such a pair of partitions, define a map \[D_{(I_1,J_1,K_1,L_1)<(I_2,J_2,K_2,L_2)}: Z_{I_1}\cap Y_{J_1}\cap Z_{K_1}\cap Z_{L_1}\to Z_{I_2}\cap Y_{J_2}\cap Z_{K_2}\cap Z_{L_2}\]
		\[\begin{cases}
			\Phi_{I_1\cap J_2, L_2\cap J_1, L_2\cap I_1}, & \text{if } K_1=K_2;\\
			1 ,& \text{if } K_2=K_1\cup\{n\}, J_1=J_2, L_2=L_1, \text{ $n$ is a negative crossing} ;\\
			u^{(n)} ,& \text{if } K_2=K_1-\{n\}, J_1=J_2, I_1=I_2, \text{ $n$ is a negative crossing};\\
			u^{(p)} ,& \text{if } K_2=K_1\cup\{p\}, J_1=J_2, L_2=L_1, \text{ $p$ is a positive crossing}; \\
			1, & \text{if } K_2=K_1-\{p\}, J_1=J_2, I_1=I_2, \text{ $p$ is a positive crossing}. 
		\end{cases}\]
	\end{defi}
	
	Endow $C$ with the endomorphism \[D=\sum_{(I_1,J_1,K_1,L_1)<(I_2,J_2,K_2,L_2)} D_{(I_1,J_1,K_1,L_1)<(I_2,J_2,K_2,L_2)}.\]
	
	\begin{lem}\label{identify C with CFK}(\cite[Lemma 4.6]{MR2574747})
		$(C,D)$ is a chain complex quasi-isomorphic to $CFK^-(g)$.
	\end{lem}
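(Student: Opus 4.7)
The plan is first to verify $D^2 = 0$, and then to produce a quasi-isomorphism $(C,D)\simeq CFK^-(g)$ by an inductive mapping-cone argument at a chosen crossing.

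For $D^2 = 0$, expand the composition as a sum over chains $(P_1 < P_2 < P_3)$ of partitions. Each two-step composition falls into one of three classes according to how many $K$-modifications occur in passing from $P_1$ to $P_3$. When no $K$-modification occurs, both steps are pure $\Phi$ maps, and their composition is controlled directly by \text{Lemma \ref{composition of Phi}} applied with $K_1 = K_2 = K_3$, producing zero. When exactly one $K$-modification occurs, the contributions organize into pairs of the form ``$\Phi$ then zip/unzip'' versus ``zip/unzip then $\Phi$''; because the $\Phi$ maps in \text{Definition \ref{Phi IJK}} only count rectangles that avoid the crossing square being added to or removed from $K$ (thanks to the special diagram assumption, \text{Definition \ref{special grid diagram}}), the two orderings count the same rectangles and cancel mod $2$. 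When two $K$-modifications occur, they must be inverse to one another, producing compositions equal to $u^{(c)}\cdot 1$ or $1\cdot u^{(c)}$ acting on the appropriate summand, which cancel against the $-u^{(c)}$ appearing in the $K = \{c\}$, $I = J = \emptyset$ case of \text{Lemma \ref{composition of Phi}}. Summing these contributions gives $D^2 = 0$.

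For the quasi-isomorphism $(C,D)\simeq CFK^-(g)$, induct on $|C(g)|$. The base case $|C(g)| = 0$ is immediate: the only partition is $(\emptyset,\emptyset,\emptyset,\emptyset)$, so $C = CFK^-(g)$ and $D$ reduces to $\Phi_{\emptyset,\emptyset,\emptyset} = \partial^-$. For the inductive step, fix a crossing $c$ and split each partition by the location of $c$, decomposing $C$ as a direct sum of four summands $C_I \oplus C_J \oplus C_K \oplus C_L$ according to whether $c$ lies in $I$, $J$, $K$, or $L$. The pieces of $D$ connecting these four summands are exactly the local zip, unzip, and $\Phi$-homomorphisms from \text{Subsection \ref{Skein exact sequence}}, together with multiplication by $u^{(c)}$, and they commute with the analogous structure at the other crossings. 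Regarding $C$ as an iterated mapping cone in the $c$-direction, applying the identity $\Phi_A\Phi_B = -u^{(c)}$ of \text{Lemma \ref{composition to U multiplication}}, and mimicking the filtration argument of the proof of \text{Theorem \ref{skein exact sequence}}, one identifies $C$ up to quasi-isomorphism with the analogous complex built from a grid diagram containing one fewer crossing (namely, either $g_{c,r}$ or $g_{c,s}$ depending on which piece of the cone is being collapsed). The inductive hypothesis then completes the proof.

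The main obstacle is the bookkeeping in Step 1: the relation $<$ on four-part partitions imposes several containment conditions, and one must carefully enumerate which chains $(P_1 < P_2 < P_3)$ belong to which class and check that the pairings claimed above genuinely correspond. The crucial geometric input is that the ``one $K$-modification'' cancellation relies on the $\Phi$ maps being insensitive to the crossing being added or removed, and this uses both the special-diagram hypothesis and the standardized local picture at each crossing (\text{Figure \ref{fig:standard picture at each crossing}}), which together ensure that no rectangle counted by these $\Phi$ maps can interact with the relevant crossing square. Step 2, although more schematic, is essentially a repackaging of the filtration argument already used for the skein sequence, and should go through once the indexing is set up correctly.
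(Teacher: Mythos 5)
Your verification of $D^2=0$ is essentially the paper's argument: the same classification according to how $K$ changes, with the $K$-unchanged case controlled by \text{Lemma \ref{composition of Phi}} together with \text{Remark \ref{rmk on phi ijk}}, and the remaining compositions cancelling mod $2$ because the zip/unzip components of $D$ are $1$ or $u^{(c)}$ and therefore commute with the rectangle-counting maps. That part is sound.

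The gap is in the inductive step of your quasi-isomorphism argument. Fix a positive crossing $c$ and look at the square in \text{Equation \ref{filtration diagram}} governing the four summands of $C$ indexed by the position of $c$. The acyclic piece you may collapse is the cone of the identity map (the partitions with $c\in K\cup L$), and what survives is $\mathrm{Cone}(\Phi_B)$, which by the discussion in \text{Subsection \ref{Skein exact sequence}} is $CFK^-$ of the diagram with the crossing $c$ \emph{still present} --- it is $\mathrm{Cone}(\Phi_A)$ that gives $CFK^-(g_{c,r})$ and $Z$ that gives $CFK^-(g_{c,s})$, and neither of those is quasi-isomorphic to the total square. So the collapsed complex must be identified with the cube complex built on the \emph{same} diagram $g$ with crossing set $C(g)\setminus\{c\}$ (same generating set $\mathcal{S}(g)$, one fewer crossing being resolved), and the induction runs on the number of crossings being resolved, terminating at $CFK^-(g)$. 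As written --- identifying the collapse with the cube complex of $g_{c,r}$ or $g_{c,s}$ --- your induction would conclude that $C$ is quasi-isomorphic to $CFK^-$ of a resolved link, which is false and is the opposite of what the lemma asserts. Once that identification is corrected, your induction becomes a crossing-by-crossing repackaging of the paper's argument, which performs all the collapses at once: it kills the acyclic subcomplex of partitions having a positive crossing in $K\cup L$, then the acyclic quotient of partitions having a negative crossing in $I\cup K$, and identifies the surviving sub-quotient (all positive crossings in $I\cup J$, all negative crossings in $J\cup L$, so $K=\emptyset$ and each generator of $\mathcal{S}(g)$ appears exactly once) with $CFK^-(g)$.
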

	\begin{proof}
		The fact that $D^2=0$ follows immediately from \text{Remark \ref{rmk on phi ijk}} and \text{Lemma \ref{composition of Phi}}.  For each $(I_1,J_1,K_1,L_1)$, we consider the $(I_2,J_2,K_2,L_2)$ component of $D^2|_{(I_1,J_1,K_1,L_1)}$.  When $K_1=K_2$, we have a square exactly the same as in \text{Equation \ref{filtration diagram}}, so \text{Lemma \ref{composition of Phi}} leads to the desired result. When $\vert K_1\Delta K_2\vert=1$ or $2$, we have a square with the same map on the left and right-hand sides, and also the same map on the top and bottom edges. Since we are working over $\FF=\ZZ/2\ZZ$, the result follows. 
		
		The quasi-isomorphism can be seen by a sub-quotient argument. In $C$, there is a subcomplex with trivial homology formed by those partitions with some positive crossings in $K$ or $L$. The resulting quotient has a subcomplex consisting of those partitions with each negative crossing in $J\cup L$. This subcomplex can be identified with $CFK^-(g)$ while the resulting quotient has trivial homology. The identification can be done as follows: This sub-quotient complex has representative components of 
		$C$ with all positive crossings in $I\cup J$ and all negative crossings in $J\cup L$. In all these partitions, $K=\emptyset$, so each generator in $\mathcal{S}(g)$ appears exactly once in it. For the identification of differential, it follows from \text{Remark \ref{rmk on phi ijk}} that $\Phi_{I,J,K}$ goes back to differential when all the three sets are empty, and the observation \begin{itemize}
			\item when $I=\{p\}$ consists of s single crossing, a rectangle from  $\xv\in Z_{p}$ to $\yv\in Y_{p}$ must contain one of $B_p$ instead of $A_p$;
			\item while, when $J=\{n\}$ consists of a single crossing, a rectangle from  $\xv\in Y_{p}$ to $\yv\in Z_{p}$ must contain one of $B_p$ instead of $A_p$.
		\end{itemize} These together show that all those nonzero components of $D$ come from $\partial^-_{CFK^-(g)}$. Thus, we have proved our claim. 
		
	\end{proof}
	
	\begin{proof}(of \text{Theorem \ref{spectral sequence}})
		$(C,D)$ is actually a ``partition-filtered'' description of the $\vert C(g)\vert $-dimensional resolution cube, with all complete resolutions of $C(g)$ as an index set. Collapsing all vertical filtrations, we obtain \[\bigoplus_{I:C(g)\to \{0,1\}} CFK^-(g_I)\otimes_{R}(\bigotimes_{XX\in X(g_I)} R/(u^{(XX)})), \] with $\mathcal{U}_c$ and $\mathcal{Z}_c$ as edge homomorphisms. This and the quasi-isomorphism provided by \text{Lemma \ref{identify C with CFK}} give rise to the wanted spectral sequence.

	\end{proof}
	
	\section{An oriented version}\label{An oriented version}
	This section aims at extending the base ring of our grid homology from $\ZZ/2\ZZ$ to $\ZZ$. Our construction follows \cite[Chapter 15]{MR3381987}, \cite[Section 3]{Celoria_2023} and \cite[Section 7]{tripp2021gridhomologylensspace}. The oriented resolution cube using braid diagram has also been used in \cite{MR4777638} and \cite{Beliakova2022APO}. The strategy will be assigning a sign to each rectangle embedded in the grid diagram, then considering the sign-twisted differential and homology.
	
	\subsection{Sign assignment}
	Fix a grid diagram $g$ for some link $L\subset L(p,q)$. For any pair of $\xv, \yv \in \mathcal{S}(g)$, we can consider the set $\mathrm{Rect} (\xv,\yv)$ of rectangles from $\xv$ to $\yv$, which is either empty or consists of two elements. Ranging over all such pairs, we form a union \[\mathrm{Rect}(g)=\bigcup_{\xv,\yv\in \mathcal{S}(g)} \mathrm{Rect} (\xv,\yv).\] This will be the domain of our sign assignment.
	
	\begin{defi}\label{sign assignment}
		On a grid diagram $g$, a \emph{sign assignment} is a function \[\boldsymbol{S}: \mathrm{Rect}(g)\to \{\pm 1\}\] that satisfies \begin{enumerate}
			\item If $(r_1,r_2)$ and $(r'_1,r'_2)$ form an alternative pair, i.e. $r_1\in \mathrm{Rect}(\xv,\wv)$, $r_2\in Rect (\wv,\zv)$, $r'_1\in \mathrm{Rect}(\xv,\yv)$, $r'_2\in Rect (\yv,\zv)$ with $\yv\ne \wv$ and $r_1\ast r_2= r'_1 \ast r'_2$, then $\Sbold(r_1)\Sbold(r_2)=-\Sbold(r'_1)\Sbold(r'_2)$;
			\item If $r_1\ast r_2$ is a horizontal annulus, then $\Sbold(r_1)\Sbold(r_2)=1$;
			\item If $r_1\ast r_2$ is a vertical annulus, then $\Sbold(r_1)\Sbold(r_2)=-1$.
		\end{enumerate}
	\end{defi}
	In \cite{Celoria_2023}, the author constructed sign assignments for grid diagrams of links in lens spaces using Spin central extension of the symmetric group (following \cite{Gallais2007SignRF} and \cite{MR3381987}) and the fact that the set of generators $\mathcal{S}(g)$ can be identified with $S_n\times \ZZ/p\ZZ$ in a natural way.
	
	By definition, a sign assignment on a grid diagram has nothing to do with the choices of $O$, $X$ base points, so the existence and uniqueness shown in \cite{Celoria_2023} extend directly to our case. Moreover, when fixing a grid diagram $g$, an explicit way of constructing an isomorphism between chain complexes with different sign assignments is also given in \cite{Celoria_2023}. It is again base point-independent, so we have the following: 
	
	\begin{thm}\label{existence of oriented theory}(\cite[Theorem 1.1]{Celoria_2023})
		A sign assignment exists for any grid diagram representing a possibly singular link $L$ in $L(p,q)$ (actually unique up to a certain kind of gauge transformations). Moreover, for a fixed grid diagram, the sign-refined grid homology is independent of the choice of a sign assignment.
	\end{thm}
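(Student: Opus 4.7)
The plan is to observe that Definition \ref{sign assignment} is entirely base-point-free: a sign assignment depends only on the background Heegaard diagram $(T^2, \AV, \BV)$, the set $\mathrm{Rect}(g)$ of embedded rectangles with its juxtaposition structure, and the three axioms governing alternative pairs, horizontal annuli, and vertical annuli. None of these axioms refer to $\OV$, $\XXV_r$, or $\XXV\XXV$. Consequently, the construction reduces to Celoria's existing one in \cite{Celoria_2023}, which is already phrased in exactly this base-point-independent language.

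First I would verify the reduction in detail. For our grid diagram of a singular link in $L(p,q)$ of size $n+k$, the background $(T^2, \AV, \BV)$ is combinatorially identical to that of a regular grid diagram of grid number $n+k$ for a link in $L(p,q)$: singular rows and columns are annuli cut out by the same parallel $\alpha$ and $\beta$ curves, and they are distinguished only by which base points live in them. The identification $\mathcal{S}(g) \cong S_{n+k} \times \ZZ/p\ZZ$ is unchanged, and $\mathrm{Rect}(g)$ together with its alternative-pair and annulus structure is the same combinatorial object as in the regular case. Celoria's construction via pullback of the Spin central extension $\widetilde{S}_{n+k}$ therefore applies verbatim to produce a sign assignment, and uniqueness up to gauge transformation follows from the same cocycle argument in her Section 3, which again sees only rectangles and generators.

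For homology independence, given two sign assignments $\Sbold, \Sbold'$, the uniqueness statement supplies a gauge function $\epsilon : \mathcal{S}(g) \to \{\pm 1\}$ satisfying $\Sbold'(r) = \epsilon(\xv)\,\epsilon(\yv)\,\Sbold(r)$ for every $r \in \mathrm{Rect}(\xv, \yv)$. I would then define $\Psi : CFK^-(g,\Sbold) \to CFK^-(g,\Sbold')$ by $\Psi(\xv) = \epsilon(\xv)\,\xv$, extended $\FF[U_1,\ldots,U_{2n+k}]$-linearly, and verify $\Psi \circ \partial^{-,\Sbold} = \partial^{-,\Sbold'} \circ \Psi$ by a direct computation on generators: the $U_i^{n_{O_i}(r)}$ factors are central and commute freely with the signs $\epsilon(\cdot) \in \{\pm 1\}$, so each term in $\partial^{-,\Sbold'}\Psi(\xv)$ matches, sign for sign, the corresponding term in $\Psi \partial^{-,\Sbold}(\xv)$. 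Hence $\Psi$ is a chain isomorphism, yielding the desired independence, and the same $\Psi$ descends to both $\widehat{CFK}$ and $\widetilde{CFK}$.

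The main obstacle is really just auditing Celoria's argument to confirm that no intermediate step secretly uses base-point data; this must be done carefully because her original exposition is written in the regular setting, where every column and row is of one standard type. Once one checks that her Spin-group pullback, her cocycle verification, and her gauge-classification proof all take only the pair $(\mathrm{Rect}(g), \mathcal{S}(g))$ as input, every conclusion transfers without modification, because the only new feature in our setting, namely the presence of XX marks and singular rows/columns, affects the differential (through which rectangles are counted) and not the sign axioms themselves.
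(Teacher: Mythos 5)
Your proposal is correct and follows essentially the same route as the paper: the paper likewise observes that a sign assignment is a function on $\mathrm{Rect}(g)$ whose axioms never mention the base points, so Celoria's existence and gauge-uniqueness results for regular lens-space grid diagrams transfer verbatim, and the independence of homology follows from the explicit gauge-transformation chain isomorphism $\xv\mapsto \epsilon(\xv)\xv$. Your write-up just spells out the reduction and the chain-map check in more detail than the paper, which simply cites Celoria for both steps.
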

	
	With a chosen sign assignment $\Sbold$ on $g$, we now upgrade our chain group by replacing $\FF$ with $\ZZ$ in \text{Definition \ref{definition of chain complex}} and redefining the differential to be \[\partial_{\Sbold}^-\xv=\sum_{\yv\in \boldsymbol{S}} \sum_{\substack{r\in \mathrm{Rect}^\circ(\xv,\yv)\\n_{\XXV}(r)=0}} \Sbold(r) \prod_{i=1}^{2n+k}  U_{i}^{n_{O_i}(r)} \yv;\]
	
	\[\widetilde{\partial}_{\Sbold} \xv=\sum_{\yv\in \boldsymbol{S}} \sum_{\substack{r\in \mathrm{Rect}^\circ(\xv,\yv)\\n_{\XXV}(r)=0, n_{\OV}(r)=0}} \Sbold(r) \yv.\]
	
	Using axioms in \text{Definition \ref{sign assignment}} one sees directly that \[(\partial_{\Sbold}^-)^2=0\text{ and } (\widetilde{\partial}_{\Sbold})^2=0.\]
	
	Indeed, when $\xv\ne \zv$, we have seen that each domain in $\pi_2(\xv,\zv)$ has either zero or two ways to decompose into empty rectangles, and property (1) of $\Sbold$ shows that when there are two ways, they cancel with each other. When $\xv=\zv$, only annuli connect $\xv$ to itself. However, these do not contribute to the differential since there is an $X$ or $XX$ base point in each horizontal or vertical annulus.
	
	Now, we have shown that there are well-defined homology theories $HFK^-(g;\ZZ)$ along with its two counterparts. Using the remark in \text{Theorem \ref{existence of oriented theory}}, we shall omit the sign assignment from our notation. 
	
	\subsection{Invariance of oriented theory}
	Discussion in the previous section shows that oriented homology theories are well-defined for each individual grid diagram. The aim of this section is to modify the proof in \text{Subsection \ref{Proof of invariance}} to show that these oriented theories won't change under grid moves, thus $HFK^\circ(L;\ZZ)$ are well-defined link invariants.
	
	Invariance of oriented theories for regular links in $S^3$ and lens spaces has been verified in \cite{MR3381987} and \cite{tripp2021gridhomologylensspace}, respectively. Note that the appearance of $XX$ singular points only affects the grading of maps but has no influence on identities appearing in the proof to hold. We have proved in \text{Subsection \ref{Proof of invariance}} that the gradings in our case behave correctly during grid moves. So, it is straight forward to adapt their proof to our situation. 
	
	The basic idea behind their proof is to replace the various tools used in the proof with corresponding signed versions. The main tasks in the following are \begin{itemize}
		\item relating sign assignments on different grid diagrams;
		\item defining signed version of all chain maps used in the proof.
	\end{itemize}
	
	\subsubsection{Commutation invariance revisit}
	To adapt the previous proof of commutation invariance to the newly signed version, it is necessary to extend the definition of $\Sbold$ to pentagons and hexagons.  
	Recall that we have relative grading formulas given by counting base points in rectangles: 
	\[M(\xv,\yv)=1+2n_{\XXV\XXV}(r)-2n_{\OV}(r)+ n_{\xv}(\mathrm{int}(r))+ n_{\yv}(\mathrm{int}(r)),\]
	\[A(\xv,\yv)= 2n_{\XXV\XXV}(r)+n_{\XXV_r}(r)-n_{\OV}(r).\] 
	
	On a grid diagram, the $\mathrm{Spin^c}$ class of a generator is characterized by the criterion that $\xv$ and $\yv$ belong to the same class if and only if they can be connected by a sequence of rectangles. Then the formula above implies that we can lift the Maslov grading in each $\mathrm{Spin^c}$ class to a function $M_{\sfrak}: \mathcal{S}_{\sfrak} \to \ZZ$. Here we use $\mathcal{S}_{\sfrak} $ to denote the set of generators in the $\mathrm{Spin^c}$ structure $\sfrak$. This allows us to use $M(\xv)$ as exponent to control the sign. This is also necessary because examples computed in \cite{tripp2021gridhomologylensspace} show that the absolute grading which is canonical in the sense that it can be identified with classical Heegaard Floer homology in the case of regular links is not integral valued in general. 
	
	Unfortunately, in \cite{tripp2021gridhomologylensspace}, they directly used $M(\xv)$, defined in \text{Equation \ref{canonical lift of M}} as an exponent of $(-1)$ to characterize the signs of pentagons, which is a little weird. However, since the only property needed in their proof is when initial states of two pentagons are connected by a rectangle without $X$ mark and of the same side (See the definition of $\Sbold_{\mathrm{Pent}}$ below), then a sign (-1) appears in $\Sbold$. So things get settled when we fix an integral lift of $M$ for each $\mathrm{Spin^c}$ class. We still denote by $M$ the integral lift chosen class by class.
	
	Assume $g$ and $g'$ are as described in \text{Subsubsection \ref{Commutation invariance}}. Since a sign assignment has nothing to do with markings, we may fix a $\Sbold$ that works for both diagrams. 
	
	Recall that we have a closest point map $\xv\mapsto\xv'$ which is a bijection $\mathcal{S}(g)\to\mathcal{S}(g')$, and on the combined diagram, we have a canonical small triangle $t_{\xv}$ connecting each pair $(\xv,\xv')$. (See \text{Figure \ref{fig:commutation}}.) When $p\in \mathrm{Pent}(\xv,\yv')$, we have $t_{\xv}\ast p \in \mathrm{Rect}^\circ(\xv',\yv')$. Let $R(p)$ denote the rectangle associated to $p$ in this way. A pentagon is said to be left (resp. right) if it lies to the left (resp. right) of the intersection point a on $\beta_i\cap \gamma_i$. Further, let $\mathrm{Pent}(g,g')$ be the set of all pentagons from a generator in $\mathcal{S}(g)$ to some generator in $\mathcal{S}(g')$ in the combined diagram. With all these terminologies in hand, we define \[\Sbold_{\mathrm{Pent}}: \mathrm{Pent}(g,g')\to \{\pm 1\},\] for $p\in Pent (\xv,\yv')$ \[\Sbold_{Pent}(p)=\begin{cases}
		(-1)^{M(\xv)+1}\Sbold(R(p)), & \text{if p is a left pentagon;} \\
		(-1)^{M(\xv)}\Sbold(R(p)), & \text{if  p is a right pentagon.}
	\end{cases}\]
	
	Recall that Maslov gradings on $\mathcal{S}(g)$ and $\mathcal{S}(g')$ are identified so the fixed lift works for both. We can introduce sign assignment $\Sbold_{\mathrm{Pent}}:\mathrm{Pent}(g',g)\to \{\pm 1\}$ in exactly the same way. In what follows, we shall omit the subscript ``Pent''.
	
	Now, we renew the pentagon counting maps to
	\[P_{\Sbold}(\xv)=\sum_{\yv'\in \boldsymbol{S}(g')}\sum_{\substack{p\in \mathrm{Pent}^\circ(\xv,\yv')\\p\cap \XXV=\emptyset}} \Sbold(p) \prod_{i=1}^{2n+k}U_i^{n_{O_i}(p)}\yv'.\] 
	\[P'_{\Sbold}(\xv')=\sum_{\yv\in \boldsymbol{S}(g)}\sum_{\substack{p\in \mathrm{Pent}^\circ(\xv',\yv)\\p\cap \XXV=\emptyset}} \Sbold(p) \prod_{i=1}^{2n+k}U_i^{n_{O_i}(p)}\yv.\] 
	
	 \cite[Lemma 7.2]{tripp2021gridhomologylensspace} shows that they are both chain maps using the definition of $\Sbold$. 
	
	Next, we further extend $\Sbold$ to $\mathrm{Hex}(g)$, the set of all hexagons connecting generators in $\mathcal{S}(g)$ on the combined grid diagram. Recall that each $h\in \mathrm{Hex}^\circ (\xv,\yv)$ has $ h \ast B \in \mathrm{Rect}^\circ(\xv,\yv)$, where $B$ is one of the bigons shown in \text{Figure \ref{fig:commutation}}. Define $R(h)=h\ast B$ and $\Sbold(h)=\Sbold(R(h))$. Renew $H$ to \[H_{\Sbold}(\xv)= \sum_{\yv\in \boldsymbol{S}(g)} \sum_{\substack{h\in \mathrm{Hex}^\circ(\xv,\yv)\\h\cap \XXV=\emptyset}} \Sbold(h) \prod_{i=1}^{2n+k} U_{i}^{n_{O_i}(h)} \yv.\]
	
	Now, \cite[Lemma 7.5]{tripp2021gridhomologylensspace} as well as  \cite[Lemma 15.3.4]{MR3381987} show that \[\partial^- \circ H_{\Sbold} +H_{\Sbold} \circ \partial^- +P'_{\Sbold} \circ P_{\Sbold} +\mathrm{Id}_{CFK^-(g;\ZZ)}=0,\] that is, $P_{\Sbold}$ and $-P'_{\Sbold}$ are homotopy inverses to each other. 
	
	With the help of this, we know that $CFK^-(g;\ZZ)$ and $CFK^-(g';\ZZ)$ are quasi-isomorphic, which is exactly what we want.
	
	\subsubsection{Stabilization invariance revisit}
	
	Let $g$ and $g'$ be the pair considered in \text{Subsubsection \ref{Stabilization invariance}}. Recall that all the maps used there count rectangles in $g'$, so things become transparent when we relate signs in $g$ with those in $g'$. This can be done using $e: \BI \to CFK^-(g)[U_{\text{new}}][1,1]$, which is defined via bijection not only on the set of generators but also on the set of empty rectangles. Via this correspondence, a sign assignment $\Sbold:\mathrm{Rect} (g')\to \{\pm 1\}$ gives rise to one on $\mathrm{Rect}(g)$. Using this fact, a lift $M_{\sfrak}$ for each $\mathrm{Spin^c}$ class of generators on $g'$ gives rise to one on $g$ as well. 
	
	Using the chosen $\Sbold$, we can enhance each of $H_{O_{\text{new}}}$, $H_{X_{\text{new}}}$, $H_{X_{\text{new}},O_{\text{new}}}$ by adding $\Sbold(r)$ to each term in the sum. 
	
	As noted in \cite[Section 15.3.2]{MR3381987}, we still have a square
	\[\xymatrix{(\BI^-,\partial_{\BI}^{\BI}) \ar[d]^{e} \ar[r]^{\partial^{\BN}_{\BI}} &(\BN^-,\partial_{\BN}^{\BN}) \ar[d]^{e\circ H_{X_{\text{new}}}}\\
		(CFK^-(g)[U_{\text{new}}][1,1],-\partial^-_{g}) \ar[r]^{U_{\text{new}}-U_j} &(CFK^-(g)[U_{\text{new}}][1,1],\partial^-_{g})\\} .\]
	
	Over $\ZZ$, it is no longer commutative. Instead, each edge map anti-commutes with the differentials, and maps going right-down and down-right sum to zero. Alternatively, consider the map $\mathrm{stab}:CFK^-(g';\ZZ)\to \mathrm{Cone} (U_{\text{new}}-U_j)$ defined by 
	\[\mathrm{stab}(\xv)= \begin{cases}
		(-1)^{M(\xv)}e(\xv), & \xv \in \BI\\
		(-1)^{M(\xv)} e\circ H_{X_{\text{new}}}(\xv), & \xv \in \BN 
	\end{cases}\] on the set of generators and extend it linearly to the whole complex. The property above can be reinterpreted as $stab$ being a chain map. This follows from (1) in \text{Definition \ref{sign assignment}}. 
	
	Using (1)(2)(3) in \text{Definition \ref{sign assignment}}, one sees that \begin{itemize}
		\item \text{Proposition \ref{homotopic U_i action}} still holds over $\ZZ$, so the choice of $U_j$ on the same thin edge or $S^1$ component as $U_{\text{new}}$ is still irrelevant.
		\item 	$H_{X_{\text{new}}}\circ H_{O_{\text{new}}}+\mathrm{Id}_{\BI^-}=0$.
		\item  $H_{O_{\text{new}}}\circ H_{X_{\text{new}}}+ \partial^{\BN}_{\BN}\circ H_{X_{\text{new}},O_{\text{new}}}+ H_{X_{\text{new}},O_{\text{new}}}\circ \partial^{\BN}_{\BN}+\mathrm{Id}_{\BN^-}=0$
	\end{itemize} 
	
	Note that two vertical maps are quasi-isomorphisms of chain complexes thanks to the identities above. Now, the argument about mapping cone in \text{Subsubsection \ref{Stabilization invariance}} and \cite[Lemma 5.2.12]{MR3381987} finishes the proof.
	
	\subsection{Oriented resolution cube}
	
	In this subsection, we are aiming to extend results in \text{Section \ref{Resolution cube}} to an oriented version. Note that the resolution cube constructed in \cite{MR2574747} and \cite{MR3229041} are both over $\FF=\ZZ/2\ZZ$, but some indication of signed convention appeared in an arxiv version of \cite{MR2574747}. Recently, oriented resolution cubes for knots in $S^3$ appeared in \cite{Beliakova2022APO} and \cite{MR4777638}. We now try their methods in our situation.
	
	As in \text{Subsection \ref{Resolution cube and spectral sequence}}, we fix a special grid diagram $g$ for some link $L$ in $L(p,q)$ with standard picture as in \text{Figure \ref{fig:standard picture at each crossing}} at each crossing. We assume the size of $g$ is $n+k$ as usual, where $n$ is the number of $XX$ base points while $k$ is the number of $X$ base points. All the notations below will be the same as in \text{Section \ref{Resolution cube}}. By assumption on $g$, we have for each $c\in C(g)$, diagrams of $g_{c,s}$ and $g_{c,r}$ obtained from local modification. For a given complete resolution $I$, by applying the procedure shown in \text{Figure \ref{fig:resolution of crossings}} to each crossing, we obtain a compatible diagram $g_I$.
	
	As we did when proving the invariance of the oriented theory, once we choose a sign assignment for $g$, it gives rise to ones on the diagrams $g_{c,s}$ and $g_{c,r}$ for any $c\in C(g)$, as well as one on $g_I$ for any complete resolution of $g$. Since in a standard picture we only modify the picture locally at each crossing, it suffices to show the existence of induced sign assignment on $g_{c,s}$ and $g_{c,r}$ for an arbitrary $c\in C(g)$. Recall that $g_{c,r}$ is obtained from $g$ by changing the position of a pair of $X$ base points; in particular, they share the same grid. Using the fact that a sign assignment has nothing to do with the set of $O$, $X$ base points, one on $g$ naturally gives rise to one on $g_{c,r}$. On the other hand, $g_{c,s}$ has grid states and rectangles in canonical one-to-one correspondence with a subcomplex of $CFK^-(g)$ (or $CFK^-(g_{c,r})$, depending on the sign of $c$) (See the proof of \text{Lemma \ref{g_s isom to Z}}). So there is a unique sign assignment on $g_{c,s}$ compatible with the given one on $g$.  
	
	As in the previous section, we fix an integral lift of Maslov grading in each $\mathrm{Spin^c}$ structure on the diagram $g$. This gives rise to a lift of $M$ for generators on all $g_{c,r}$, $g_{c,s}$, as well as $g_I$s. 
	
	With these preparations in hand, we can now upgrade maps and complexes in \text{Section \ref{Resolution cube}} to their counterparts over $\ZZ$. Since we used $Q$ from the proof of \text{Lemma \ref{g_s isom to Z}} to induce the sign assignment on $g_{c,s}$, it is now an isomorphism of chain complexes over $\ZZ[U_1,\ldots,U_{2n+k}]$. Now
	redefine maps $\Phi_A:Y\to Z$ and $\Phi_B: Z\to Y$ by 
	\[\Phi_A(\xv)=\sum_{\yv\in \mathcal{P}} \sum_{\substack{r\in \mathrm{Rect}^\circ(\xv,\yv)\\n_{\XXV_0}(r)=0, n_{\mathbb{A}}(r)=1}} \Sbold(r) \prod_{O_i\in \OV} U_i^{n_{O_i}(r)}\yv;\]

	\[\Phi_B(\xv)=\sum_{\yv\in \mathcal{C}} \sum_{\substack{r\in \mathrm{Rect}^\circ(\xv,\yv)\\n_{\XXV_0}(r)=0, n_{\mathbb{B}}(r)=1}} \Sbold(r) \prod_{O_i\in \OV} U_i^{n_{O_i}(r)}\yv.\]
	
	These are still chain maps, which is guaranteed by (1) in the definition of $\Sbold$.
	
	In the proof of \text{Lemma \ref{composition to U multiplication}}, we saw that only two thin horizontal annuli and two thin vertical annuli through the pairs of $A_c$, $B_c$ base points contribute to $\Phi_A\circ \Phi_B$ and $\Phi_B\circ\Phi_A$. By (2)(3) in \text{Definition \ref{sign assignment}}, two horizontal annuli contribute multiplication by $U_c+U_d$, while vertical annuli contribute multiplication by $-U_a-U_b$. This shows that \text{Lemma \ref{composition to U multiplication}} still holds, that is, $\Phi_A\circ \Phi_B$ and $\Phi_B\circ\Phi_A$ are both equal to multiplication by $U_c+U_d-U_a-U_b$.
	
	From this, we can deduce that diagrams in \text{Equation \ref{filtration diagram}} still commute. After obtaining these diagrams, the proof of \text{Theorem \ref{skein exact sequence}} in \text{Subsection \ref{Skein exact sequence}} is purely algebraic, so it now shows that the skein exact sequences in \text{Equation \ref{positive skein} and \ref{negative skein}} still hold over $\ZZ$.

	Now we move on to consider \text{Theorem \ref{spectral sequence}}. Using diagrams in \text{Equation \ref{filtration diagram}}, the zip and unzip homomorphisms can be defined as before. To prove the theorem, we construct a signed version of $\Phi_{I,J,K}$ and $(C,D)$. For $\Phi_{I,J,K}$, we only need to enhance the rectangle counting to an oriented version. Thanks to (1) of \text{Definition \ref{sign assignment}}, \text{Lemma \ref{composition of Phi}} still hold. Together with the sign conventions in (2)(3), one sees that $\Phi_A\circ \Phi_B$ and $\Phi_B\circ \Phi_A$ remain equal to multiplication by $-u^{(c)}$. 
	
	The group $C$ is defined as in \text{Subsection \ref{Resolution cube and spectral sequence}} except each summand is now over the base ring $\ZZ$. \text{Definition \ref{higher differential}} needs more care; we modify it as follows: 
	\begin{defi}\label{signed higher differential}
		For any pair of partitions $(I_1,J_1,K_1,L_1)<(I_2,J_2,K_2,L_2)$ , define a map \[D_{(I_1,J_1,K_1,L_1)<(I_2,J_2,K_2,L_2),\Sbold}: Z_{I_1}\cap Y_{J_1}\cap Z_{K_1}\cap Z_{L_1}\to Z_{I_2}\cap Y_{J_2}\cap Z_{K_2}\cap Z_{L_2}\]
		\[\begin{cases}
			\Phi_{I_1\cap J_2, L_2\cap J_1, L_2\cap I_1}, & \text{if } K_1=K_2;\\
			\pm 1 ,& \text{if } K_2=K_1\cup\{n\}, J_1=J_2, L_2=L_1, \text{ $n$ is a negative crossing}; \\
			\pm u^{(n)} ,& \text{if } K_2=K_1-\{n\}, J_1=J_2, I_1=I_2, \text{ $n$ is a negative crossing};\\
			\pm u^{(p)} ,& \text{if } K_2=K_1\cup\{p\}, J_1=J_2, L_2=L_1, \text{ $p$ is a positive crossing}; \\
			\pm 1, & \text{if } K_2=K_1-\{p\}, J_1=J_2, I_1=I_2, \text{ $p$ is a positive crossing} .
		\end{cases}\]
		
	\end{defi}
	The sign depends on the Maslov grading of the generator $\xv$ as well as the crossing $n$ or $p$. After fixing an order $<$ on $C(g)$, we define the sign to be \[(-1)^{M(\xv)+\vert \{c\in K_1\cap K_2| c<n (c<p)\} \vert },\] as they did in an arxiv version of \cite{MR2574747} (See also \cite{MR4777638}). Let $D_{\Sbold}$ be the sum of these new maps, ranging over all such pairs. We now show that \text{Lemma \ref{identify C with CFK}} still holds.
	
	To show that $D_{\Sbold}^2=0$, we do as before: for each $(I_1,J_1,K_1,L_1)$, we consider the $(I_2,J_2,K_2,L_2)$ component of $D^2|_{(I_1,J_1,K_1,L_1)}$. Assume the intermediate step is the summand corresponding to $(I_3,J_3,K_3,L_3)$. If the partitions $(I_1,J_1,K_1,L_1)$ and $(I_2,J_2,K_2,L_2)$ are identically the same, then things follow from $\partial^2=0$ using \text{Remark \ref{rmk on phi ijk}}. That remark also shows that for the map to be nonzero, at most two crossings change its belonging in $I$, $J$, $K$, or $L$.

	Next, assume $K_1=K_2$, but $(I_1,J_1,K_1,L_1)$ and  $(I_2,J_2,K_2,L_2)$ are not identically the same. When $K_3=K_2=K_1$, again by \text{Remark \ref{rmk on phi ijk}}, for the maps to be nonzero, we can only have one crossing change its position in $I$, $J$, $K$, or $L$. We have two subcases to consider. In one case, we have a square with two groups on the left equal to $(I_1,J_1,K_1,L_1)$ and two on the right equal to $(I_2,J_2,K_2,L_2)=(I_1-\{c\},J_1\cup \{c\},K_1,L_1)$, whose contribution is \[\partial\circ \Phi_{c,\emptyset,\emptyset}-\Phi_{c,\emptyset,\emptyset}\circ \partial=0.\] In the other case, we have $(I_3,J_3,K_3,L_3)=(I_1-\{c\},J_1\cup \{c\},K_1,L_1)$ and $(I_2,J_2,K_2,L_2)=(I_1-\{c\},J_1,K_1,L_1\cup \{c\})$, so the composition is of form $\Phi_{\{c\},\emptyset,\emptyset} \circ\Phi_{\emptyset,\{c\},\emptyset}$ which is multiplication by $-u^{(c)}$ using \text{Lemma \ref{composition to U multiplication}}. This will cancel with a component in which the intermediate group has $K_3\ne K_1$. For intermediate groups with $K_3\ne K_1$, it can only be $K_3=K_1\cup \{c\}$ using the definition of the partial order (and in this case $c\in L_2$). Now the two maps must compose to $\pm u^{(c)}$. The sign is always + since $U$ multiplication has nothing to do with the parity of $M$ and $K_1\cap K_3=K_3\cap K_2$. For each such $K_3\ne K_1$, there is exactly one partition $K'_3=K_1$ such that from $(I_1,J_1,K_1,L_1)$, $c$ first moves from $I$ to $J$, then to $L$ in $(I_2,J_2,K_2,L_2)$. This forms a square with the remaining case of $K_3=K_1$, showing that $D^2=0$ when $K_1=K_2$.
	
	When $\vert K_1\Delta K_2\vert =1$, we show the case $K_2=K_1\cup \{n\}$ and $K_2=K_1- \{n\}$ as examples, positive case can be dealt with similarly. For the first one, the two possible intermediate steps are $(I_1-\{n\},J_1,K_1\cup\{n\}, L_1)=(I_2,J_2,K_2,L_2)$ or $(I_1,J_1,K_1,L_1)$ for the maps to be nonzero. This leads to a square with two vertical maps, all being $\pm 1$ and two horizontal maps being restriction of $\partial^-$ to $Z_I\cap Y_{J}\cap Z_{K}\cap Z_{L}$. The signed convention in \text{Definition \ref{signed higher differential}} shows that it anti-commutes, that is $D_{\Sbold}^2=0$. For the second one, the only two possible intermediate groups correspond to partitions $(I_1,J_1,K_1-\{n\}, L_1\cup \{n\})=(I_2,J_2,K_2,L_2)$ and $(I_1,J_1,K_1,L_1)$ for the maps to be nonzero. The square will take the same form as in the first case (with vertical maps replaced by $\pm u^{(n)}$), showing that $D_{\Sbold}^2=0$ in this case.
	
	When $\vert K_1\Delta K_2\vert=2$, we must have one of the three cases: $K_2=K_1\cup\{c_1,c_2\}$, $K_2=K_1-\{c_1,c_2\}$ or $K_2=(K_1-\{c_1\})\cup \{c_2\}$. In the first case, for the maps to be nonzero, the intermediate step can only belong to partitions $(I_1-\{c_1\},J_1,K_1\cup\{c_1\}, L_1)$ or $(I_1-\{c_2\},J_1,K_1\cup\{c_2\},L_1)$. We again obtain a square with the same maps, up to sign on horizontal and vertical edges, both of the form $\pm u^{(c)}$ or $1$ thus Maslov grading preserving. Note that exactly one of $c_1<c_2$, $c_2<c_1$ holds, without loss of generality assume the first. Then the composition with $(I_1-\{c_1\},J_1,K_1\cup\{c_1\}, L_1)$ as an intermediate step has a negative sign, while the one  that goes through $(I_1-\{c_2\},J_1,K_1\cup\{c_2\},L_1)$ has a positive sign, showing that the square anti-commutes, thus sum to zero. This argument also applies to the remaining cases.
	
	Combining all the discussion above, we can conclude that $D_{\Sbold}^2=0$. The part of the proof that $C\simeq CFK^-(g)$ is again purely algebraic. The added sign has no effect on the identification of $D$ in the sub-quotient with $\partial^-$, so we have the desired quasi-isomorphism in oriented theory.
	
	With all these lemmas prepared, we can prove \text{Theorem \ref{spectral sequence}} as in \text{Subsection \ref{Resolution cube and spectral sequence}}. 
	
	Now we can conclude that skein relations and resolution cubes remain true for $HFK^-(L;\ZZ)$.
	
	\subsection{Orientation system versus sign assignment}\label{Orientation system versus sign assignment}
	
	The concept of a coherent orientation system in Heegaard Floer homology was first introduced in \cite[Section 3.6]{MR2113019}. Specializing to knot Floer homology, Sarkar (\cite{Sarkar2010ANO}) defined two coherent orientation systems to be weakly equivalent if they coincide on all domains without any marked points. For grid homology, he defined two sign assignments $s_1$ and $s_2$, (satisfying only assumption (1) in \text{Definition \ref{sign assignment}}), to be weakly equivalent if, for each link component $L_i$, the product of signs assigned to those horizontal and vertical annuli containing marked points on this component is identical. For clarity, in this subsection, we refer to this weaker notion as a sign assignment and denote the one from \text{Definition \ref{sign assignment}} as a true sign assignment, following \cite{MR2372850}. He showed that, for grid diagrams of knots and links in $S^3$, two orientation systems are weakly equivalent if and only if they produce to weakly equivalent sign assignments.
	
	On the other hand, in \cite{MR3412088}, Alishahi and Eftekhary  proved the existence of coherent orientation systems with certain positive boundary degeneration. This was later regarded as the canonical orientation for knot Floer theory. In \cite{eftekhary2018signassignmentslinkfloer}, Eftekhary showed that orientation systems with negative boundary degeneration lead to true sign assignments, whereas the canonical systems from \cite{MR3412088} result in false sign assignments, where the $\pm 1$ values in (2) and (3) of \text{Definition \ref{sign assignment}} are exchanged.  Nevertheless, we shall show that the true and false sign assignments lead to isomorphic grid homology groups, so our oriented grid homology coincides with the oriented theory considered in \cite{MR3412088}. Our argument relies on the construction in \cite[Theorem 3.5]{Sarkar2010ANO}.
	
	\begin{thm}
		Various versions of grid homologies for knots and links in lens spaces defined using true and false assignments over $\ZZ[U_1,\ldots,U_{2n+m}]$ coincide.
	\end{thm}
	\begin{proof}
		To see this, we show that certain representatives of chain complexes computing these two oriented homology theories can be identified directly.
		
		It was shown in \cite{MR2372850} and \cite{Celoria_2023} that true assignment exists and is unique up to gauge equivalence on grid diagrams for knots and links in $S^3$ and lens spaces, respectively. In \cite{Celoria_2023}, Celoria also showed that gauge equivalent sign assignments lead to the same isomorphic class of chain complexes so in particular, the homology group is well-defined and independent of the specific sign assignment chosen.
		
		Let us denote the horizontal annuli and vertical aunnli by $H_1,\ldots, H_N$ and $V_1,\ldots, V_N$, respectively. Here $N$ is the size of our grid diagram. In \cite[Theorem 3.5]{Sarkar2010ANO}, it was shown that given any assignment of signs to these annuli satisfying that the number of $-1$ on $H_i$'s is same as the number of $1$ on $V_i$'s modulo 2, we can construct a sign assignment with this given ``boundary condition'' using a true sign assignment and a 2-cochain $m: C _2(g)\to \{\pm 1\}$. Here the notion of cochain relies on the cell structure on the torus induced by the grid, as we mentioned at the beginning of \text{Subsection \ref{Construction of chain complex}}. In particular, we can construct a false sign assignment as follows.
		
		Fix any true sign assignment $s_0$. Consider a cochain $m:C_2(g)\to \{\pm 1\}$ defined by \[m(c)=\begin{cases}
			-1,\text{there is an $X$ or $XX$ marked point in this cell;} \\
			-1,\text{otherwise};
		\end{cases}\] on cells and extended linearly. It follows directly that $s(r):=s_0(r)m(r)$ is a false assignment. However, on each rectangle $r$ we counted in $\partial^-$, $s_0(r)=s(r)$, since we block all the $X$ base points. Thus, we can conclude that $s$ and $s_0$ lead to canonically isomorphic chain complexes.
		
		Moreover, if $s'$ is another false sign assignment, then $s_0'(r):=m(r)s'(r)$ is a true sign assignment. By the uniqueness result, $s_0$ and $s_0'$ are gauge equivalent, i.e., there exists a function $f: \boldsymbol{S} \to \{\pm 1\}$ such that for any $\xv,\yv\in \boldsymbol{S}$, and any $r\in \mathrm{Rect}(\xv,\yv)$, we have $s_0(r)=s_0'(r)\cdot f(\xv)\cdot f(\yv)$. This quickly leads to \[s(r)=s'(r)\cdot f(\xv)\cdot f(\yv)\] for any triple $(\xv,\yv,r)$ as above. Thus, a false assignment is also unique up to gauge equivalence. This together with the paragraph above conclude the desired result.

	\end{proof}
	
	Now, we can conclude that the oriented grid homologies defined in previous sections coincide with the oriented HFK theory considered in \cite{MR3412088}.

	\bibliographystyle{plain}
	\bibliography{citation}

\begin{thebibliography}{10}

\bibitem{MR3412088}
Akram~S. Alishahi and Eaman Eftekhary.
\newblock A refinement of sutured {F}loer homology.
\newblock {\em J. Symplectic Geom.}, 13(3):609--743, 2015.

\bibitem{MR2491583}
Benjamin Audoux.
\newblock Singular link {F}loer homology.
\newblock {\em Algebr. Geom. Topol.}, 9(1):495--535, 2009.

\bibitem{MR2429242}
Kenneth~L. Baker, J.~Elisenda Grigsby, and Matthew Hedden.
\newblock Grid diagrams for lens spaces and combinatorial knot {F}loer
  homology.
\newblock {\em Int. Math. Res. Not. IMRN}, (10):Art. ID rnm024, 39, 2008.

\bibitem{bao2018floerhomologyembeddedbipartite}
Yuanyuan Bao.
\newblock Floer homology and embedded bipartite graphs, 2018.
\newblock arXiv:1401.6608.

\bibitem{Beliakova2022APO}
Anna Beliakova, Krzysztof~K. Putyra, Louis-Hadrien Robert, and Emmanuel Wagner.
\newblock A proof of dunfield-gukov-rasmussen conjecture, 2022.
\newblock arXiv:2210.00878v1.

\bibitem{MR710104}
Francis Bonahon.
\newblock Diff\'eotopies des espaces lenticulaires.
\newblock {\em Topology}, 22(3):305--314, 1983.

\bibitem{Celoria_2023}
Daniele Celoria.
\newblock A note on grid homology in lens spaces: $\mathbb{Z}$ coefficients and
  computations.
\newblock {\em Glasgow Mathematical Journal}, 65(2):457–479, 2023.

\bibitem{CROMWELL199537}
Peter~R. Cromwell.
\newblock Embedding knots and links in an open book i: Basic properties.
\newblock {\em Topology and its Applications}, 64(1):37--58, 1995.

\bibitem{MR4777638}
Nathan Dowlin.
\newblock A spectral sequence from {K}hovanov homology to knot {F}loer
  homology.
\newblock {\em J. Amer. Math. Soc.}, 37(4):951--1010, 2024.

\bibitem{article}
Ivan Dynnikov.
\newblock Arc-presentations of links. monotonic simplification.
\newblock {\em Fundamenta Mathematicae}, 190, 08 2002.

\bibitem{eftekhary2018signassignmentslinkfloer}
Eaman Eftekhary.
\newblock On sign assignments in link {F}loer homology.
\newblock {\em Michigan Math. J.}, 70(1):207--212, 2021.

\bibitem{10.1007/978-3-030-16031-9_17}
Bo{\v{s}}tjan Gabrov{\v{s}}ek and Eva Horvat.
\newblock Knot invariants in lens spaces.
\newblock In Colin~C. Adams, Cameron~McA. Gordon, Vaughan~F.R. Jones, Louis~H.
  Kauffman, Sofia Lambropoulou, Kenneth~C. Millett, Jozef~H. Przytycki, Renzo
  Ricca, and Radmila Sazdanovic, editors, {\em Knots, Low-Dimensional Topology
  and Applications}, pages 347--361, Cham, 2019. Springer International
  Publishing.

\bibitem{Gallais2007SignRF}
\'Etienne Gallais.
\newblock Sign refinement for combinatorial link {F}loer homology.
\newblock {\em Algebr. Geom. Topol.}, 8(3):1581--1592, 2008.

\bibitem{MR3677933}
Shelly Harvey and Danielle O'Donnol.
\newblock Heegaard {F}loer homology of spatial graphs.
\newblock {\em Algebr. Geom. Topol.}, 17(3):1445--1525, 2017.

\bibitem{MR946218}
Louis~H. Kauffman.
\newblock Invariants of graphs in three-space.
\newblock {\em Trans. Amer. Math. Soc.}, 311(2):697--710, 1989.

\bibitem{Kawauchi1996ASO}
Akio Kawauchi.
\newblock {\em A survey of knot theory}.
\newblock Birkh\"auser Verlag, Basel, 1996.
\newblock Translated and revised from the 1990 Japanese original by the author.

\bibitem{MR4701948}
Hajime Kubota.
\newblock Concordance invariant {$\Upsilon$} for balanced spatial graphs using
  grid homology.
\newblock {\em J. Knot Theory Ramifications}, 32(13):Paper No. 2350088, 30,
  2023.

\bibitem{Kubota2023SomePO}
Hajime Kubota.
\newblock Some properties of grid homology for moy graphs.
\newblock {\em Journal of Knot Theory and Its Ramifications}, 2023.

\bibitem{kubota2024gridhomologyspatialgraphs}
Hajime Kubota.
\newblock Grid homology for spatial graphs and a k\"{u}nneth formula of
  connected sum, 2024.
\newblock arXiv:2308.03324v3.

\bibitem{MR2240908}
Robert Lipshitz.
\newblock A cylindrical reformulation of {H}eegaard {F}loer homology.
\newblock {\em Geom. Topol.}, 10:955--1096, 2006.
\newblock [Paging previously given as 955--1097].

\bibitem{MR3229041}
Ciprian Manolescu.
\newblock An untwisted cube of resolutions for knot {F}loer homology.
\newblock {\em Quantum Topol.}, 5(2):185--223, 2014.

\bibitem{MR2480614}
Ciprian Manolescu, Peter Ozsv\'ath, and Sucharit Sarkar.
\newblock A combinatorial description of knot {F}loer homology.
\newblock {\em Ann. of Math. (2)}, 169(2):633--660, 2009.

\bibitem{MR2372850}
Ciprian Manolescu, Peter Ozsv\'ath, Zolt\'an Szab\'o, and Dylan Thurston.
\newblock On combinatorial link {F}loer homology.
\newblock {\em Geom. Topol.}, 11:2339--2412, 2007.

\bibitem{MR2529302}
Peter Ozsv\'ath, Andr\'as Stipsicz, and Zolt\'an Szab\'o.
\newblock Floer homology and singular knots.
\newblock {\em J. Topol.}, 2(2):380--404, 2009.

\bibitem{MR1957829}
Peter Ozsv\'{a}th and Zolt\'{a}n Szab\'{o}.
\newblock Absolutely graded {F}loer homologies and intersection forms for
  four-manifolds with boundary.
\newblock {\em Adv. Math.}, 173(2):179--261, 2003.

\bibitem{MR2065507}
Peter Ozsv\'{a}th and Zolt\'{a}n Szab\'{o}.
\newblock Holomorphic disks and knot invariants.
\newblock {\em Adv. Math.}, 186(1):58--116, 2004.

\bibitem{MR2113020}
Peter Ozsv\'{a}th and Zolt\'{a}n Szab\'{o}.
\newblock Holomorphic disks and three-manifold invariants: properties and
  applications.
\newblock {\em Ann. of Math. (2)}, 159(3):1159--1245, 2004.

\bibitem{MR2113019}
Peter Ozsv\'{a}th and Zolt\'{a}n Szab\'{o}.
\newblock Holomorphic disks and topological invariants for closed
  three-manifolds.
\newblock {\em Ann. of Math. (2)}, 159(3):1027--1158, 2004.

\bibitem{MR2443092}
Peter Ozsv\'ath and Zolt\'an Szab\'o.
\newblock Holomorphic disks, link invariants and the multi-variable {A}lexander
  polynomial.
\newblock {\em Algebr. Geom. Topol.}, 8(2):615--692, 2008.

\bibitem{MR2574747}
Peter Ozsv\'ath and Zolt\'an Szab\'o.
\newblock A cube of resolutions for knot {F}loer homology.
\newblock {\em J. Topol.}, 2(4):865--910, 2009.

\bibitem{MR3381987}
Peter~S. Ozsv\'{a}th, Andr\'{a}s~I. Stipsicz, and Zolt\'{a}n Szab\'{o}.
\newblock {\em Grid homology for knots and links}, volume 208 of {\em
  Mathematical Surveys and Monographs}.
\newblock American Mathematical Society, Providence, RI, 2015.

\bibitem{MR1414898}
V.~V. Prasolov and A.~B. Sossinsky.
\newblock {\em Knots, links, braids and 3-manifolds}, volume 154 of {\em
  Translations of Mathematical Monographs}.
\newblock American Mathematical Society, Providence, RI, 1997.
\newblock An introduction to the new invariants in low-dimensional topology,
  Translated from the Russian manuscript by Sossinsky [Sosinski\u i].

\bibitem{MR2704683}
Jacob~Andrew Rasmussen.
\newblock {\em Floer homology and knot complements}.
\newblock ProQuest LLC, Ann Arbor, MI, 2003.
\newblock Thesis (Ph.D.)--Harvard University.

\bibitem{Sarkar2010ANO}
Sucharit Sarkar.
\newblock A note on sign conventions in link floer homology.
\newblock {\em arXiv: Geometric Topology}, 2010.
\newblock arXiv:1002.0918.

\bibitem{46590c91-7ad9-3a60-87b6-4a8ddc2d37fe}
Sucharit Sarkar and Jiajun Wang.
\newblock An algorithm for computing some heegaard floer homologies.
\newblock {\em Annals of Mathematics}, 171(2):1213--1236, 2010.

\bibitem{tripp2021gridhomologylensspace}
Samuel Tripp.
\newblock On grid homology for lens space links: combinatorial invariance and
  integral coefficients, 2021.
\newblock arXiv:2110.00663.

\end{thebibliography}

\end{document}